\documentclass[11pt, a4paper, reqno]{amsart}

\usepackage[T1]{fontenc}
\usepackage[utf8]{inputenc}

\usepackage{newtxtext}
\usepackage{newtxmath}

\usepackage[scaled=0.90]{helvet}

\usepackage{courier}

\usepackage[mathcal]{eucal}

\usepackage{xcolor}

\usepackage{microtype}                   
\usepackage[
    a4paper,
    top=2.5cm,
    bottom=2.5cm,
    left=2.5cm,
    right=2.5cm,
    headheight=14pt,
    headsep=1.2em,
    footskip=2.5em,
]{geometry}

\usepackage{fancyhdr}
\fancypagestyle{plain}{%
    \fancyhf{}
    
    \fancyfoot[C]{\small\thepage}
}

\usepackage{amsmath, amsthm}
\usepackage{mathtools}
\usepackage{bm}
\usepackage{nicefrac}
\usepackage{stmaryrd}

\usepackage{aliascnt}

\newcommand{\newtheoremalias}[2]{%
  \newtheorem{#1}{#2}%
}

\theoremstyle{plain}
\newtheorem{theorem}{Theorem}
\newtheoremalias{lemma}{Lemma}
\newtheoremalias{proposition}{Proposition}
\newtheoremalias{corollary}{Corollary}

\newtheoremalias{claim}{Claim}

\theoremstyle{definition}
\newtheoremalias{definition}{Definition}
\newtheoremalias{example}{Example}
\newtheoremalias{examples}{Examples}
\newtheoremalias{exercise}{Exercise}

\newtheoremalias{question}{Question}
\newtheoremalias{setup}{Setup}
\newtheoremalias{notation}{Notation}

\theoremstyle{remark}
\newtheoremalias{remark}{Remark}
\newtheoremalias{remarks}{Remarks}
\newtheoremalias{note}{Note}
\newtheoremalias{observation}{Observation}

\newtheorem*{theorem*}{Theorem}
\newtheorem*{lemma*}{Lemma}
\newtheorem*{proposition*}{Proposition}
\newtheorem*{corollary*}{Corollary}
\newtheorem*{conjecture*}{Conjecture}
\newtheorem*{remark*}{Remark}

\AtBeginDocument{}

\usepackage{graphicx}
\usepackage{float}
\usepackage{booktabs}
\usepackage{caption}
\usepackage{tikz}
\usepackage{tikz-cd}       
\usetikzlibrary{arrows.meta, calc, positioning, decorations.pathmorphing,
                 decorations.pathreplacing,
                 shapes.geometric, patterns, shadows}

\usepackage[
    colorlinks  = true,
    linkcolor   = cyan!80!black,
    citecolor   = purple,
    urlcolor    = blue!60!black,
    backref     = page,
]{hyperref}
\usepackage[capitalize, noabbrev]{cleveref}

\crefname{subsection}{Subsection}{Subsections}
\Crefname{subsection}{Subsection}{Subsections}

\definecolor{secref}{HTML}{B22222}
\definecolor{figref}{HTML}{0D3B66}
\definecolor{thmref}{rgb}{0, 0.7, 0.7}
\AtBeginDocument{%
  \crefformat{section}{\textcolor{secref}{Section~}#2\textcolor{secref}{#1}#3}%
  \Crefformat{section}{\textcolor{secref}{Section~}#2\textcolor{secref}{#1}#3}%
  \crefformat{subsection}{\textcolor{secref}{Subsection~}#2\textcolor{secref}{#1}#3}%
  \Crefformat{subsection}{\textcolor{secref}{Subsection~}#2\textcolor{secref}{#1}#3}%
  \crefformat{figure}{\textcolor{figref}{Figure~}#2\textcolor{figref}{#1}#3}%
  \Crefformat{figure}{\textcolor{figref}{Figure~}#2\textcolor{figref}{#1}#3}%
  \crefformat{theorem}{\textcolor{thmref}{Theorem~}#2\textcolor{thmref}{#1}#3}%
  \Crefformat{theorem}{\textcolor{thmref}{Theorem~}#2\textcolor{thmref}{#1}#3}%
  \crefformat{lemma}{\textcolor{thmref}{Lemma~}#2\textcolor{thmref}{#1}#3}%
  \Crefformat{lemma}{\textcolor{thmref}{Lemma~}#2\textcolor{thmref}{#1}#3}%
  \crefformat{proposition}{\textcolor{thmref}{Proposition~}#2\textcolor{thmref}{#1}#3}%
  \Crefformat{proposition}{\textcolor{thmref}{Proposition~}#2\textcolor{thmref}{#1}#3}%
  \crefformat{corollary}{\textcolor{thmref}{Corollary~}#2\textcolor{thmref}{#1}#3}%
  \Crefformat{corollary}{\textcolor{thmref}{Corollary~}#2\textcolor{thmref}{#1}#3}%
  \crefformat{definition}{\textcolor{thmref}{Definition~}#2\textcolor{thmref}{#1}#3}%
  \Crefformat{definition}{\textcolor{thmref}{Definition~}#2\textcolor{thmref}{#1}#3}%
  \crefformat{example}{\textcolor{thmref}{Example~}#2\textcolor{thmref}{#1}#3}%
  \Crefformat{example}{\textcolor{thmref}{Example~}#2\textcolor{thmref}{#1}#3}%
  \crefformat{remark}{\textcolor{thmref}{Remark~}#2\textcolor{thmref}{#1}#3}%
  \Crefformat{remark}{\textcolor{thmref}{Remark~}#2\textcolor{thmref}{#1}#3}%
  \crefformat{claim}{\textcolor{thmref}{Claim~}#2\textcolor{thmref}{#1}#3}%
  \Crefformat{claim}{\textcolor{thmref}{Claim~}#2\textcolor{thmref}{#1}#3}%
  \crefformat{conjecture}{\textcolor{thmref}{Conjecture~}#2\textcolor{thmref}{#1}#3}%
  \Crefformat{conjecture}{\textcolor{thmref}{Conjecture~}#2\textcolor{thmref}{#1}#3}%
  \crefformat{problem}{\textcolor{thmref}{Problem~}#2\textcolor{thmref}{#1}#3}%
  \Crefformat{problem}{\textcolor{thmref}{Problem~}#2\textcolor{thmref}{#1}#3}%
}

\crefname{theorem}{Theorem}{Theorems}
\Crefname{theorem}{Theorem}{Theorems}
\crefname{lemma}{Lemma}{Lemmas}
\Crefname{lemma}{Lemma}{Lemmas}
\crefname{proposition}{Proposition}{Propositions}
\Crefname{proposition}{Proposition}{Propositions}
\crefname{corollary}{Corollary}{Corollaries}
\Crefname{corollary}{Corollary}{Corollaries}
\crefname{definition}{Definition}{Definitions}
\Crefname{definition}{Definition}{Definitions}
\crefname{example}{Example}{Examples}
\Crefname{example}{Example}{Examples}
\crefname{remark}{Remark}{Remarks}
\Crefname{remark}{Remark}{Remarks}
\crefname{claim}{Claim}{Claims}
\Crefname{claim}{Claim}{Claims}
\crefname{conjecture}{Conjecture}{Conjectures}
\Crefname{conjecture}{Conjecture}{Conjectures}
\crefname{problem}{Problem}{Problems}
\Crefname{problem}{Problem}{Problems}

\makeatletter
\renewcommand{\@cite}[2]{%
    {\scriptsize\raise0.5pt\hbox{[{#1\if@tempswa , #2\fi}]}}%
}
\makeatother

\definecolor{backrefcolor}{HTML}{8B4513}
\definecolor{citpagecolor}{HTML}{D2691E}

\makeatletter
\renewcommand*{\backref}[1]{}

\renewcommand*{\backrefalt}[4]{%
    \ifcase #1 %
    \or {\hspace{0.8em}\footnotesize\color{black}(cit.\ on p.~#2).}%
    \else {\hspace{0.8em}\footnotesize\color{black}(cit.\ on pp.~#2).}%
    \fi
}

\makeatother

\definecolor{emailcolor}{HTML}{E6007E}
\makeatletter
\let\orig@email\email
\renewcommand{\email}[1]{\orig@email{\color{emailcolor}#1}}
\makeatother

\makeatletter
\renewcommand{\section}{\@startsection{section}{1}%
    {\z@}{-2.5ex \@plus -1ex \@minus -.2ex}{1.5ex \@plus .2ex}%
    {\normalfont\fontsize{16}{19}\bfseries}}
\renewcommand{\subsection}{\@startsection{subsection}{2}%
    {\z@}{-2ex \@plus -0.8ex \@minus -.2ex}{1ex \@plus .2ex}%
    {\normalfont\fontsize{13}{16}\bfseries}}
\renewcommand{\subsubsection}{\@startsection{subsubsection}{3}%
    {\z@}{-1.5ex \@plus -0.5ex \@minus -.1ex}{0.8ex \@plus .1ex}%
    {\normalfont\fontsize{11}{14}\bfseries\itshape}}

\renewcommand{\@settitle}{\begin{center}%
    \baselineskip20pt\relax
    \bfseries\fontsize{18}{22}\selectfont
    \@title
    \end{center}%
}
\let\uppercasenonmath\@gobble
\AtBeginDocument{}

\renewcommand{\@setauthors}{%
    \begingroup
    \trivlist
    \centering
    \fontsize{13}{16}\selectfont
    \@topsep30\p@\relax
    \advance\@topsep by -\baselineskip
    \item\relax
    \andify\authors
    \def\\{\protect\linebreak}%
    \authors
    \endtrivlist
    \endgroup
}
\makeatother

\definecolor{toccolor}{RGB}{27,58,92}
\definecolor{wblue}{HTML}{2D6CA2}   
\definecolor{worange}{HTML}{E26B43} 
\definecolor{ydbdr}{HTML}{5C3A21}    
\definecolor{ydwarm1}{HTML}{E8A87C}  
\definecolor{ydwarm2}{HTML}{F2D5A8}  
\definecolor{ydcore}{HTML}{D17A47}   
\definecolor{tbl-bbb}{HTML}{8C3A60}   
\definecolor{tbl-bba}{HTML}{2E3F66}   
\definecolor{tbl-bab}{HTML}{1F6E73}   
\definecolor{tbl-abb}{HTML}{2E6B3A}   
\definecolor{tbl-baa}{HTML}{3D7BBF}   
\definecolor{tbl-aba}{HTML}{B58A1D}   
\definecolor{tbl-aab}{HTML}{A33B6B}   
\definecolor{tbl-aaa}{HTML}{7A6E2A}   

\makeatletter
\renewcommand{\tableofcontents}{%
    \par
    \begin{list}{}{%
        \leftmargin2.5pc \rightmargin2.5pc
        \listparindent\z@ \itemindent\z@
        \parsep\z@ \topsep6pt \partopsep\z@}
    \item\relax
    {\large\bfseries Contents}\par
    \vspace{4pt}%
    \@input{\jobname.toc}%
    \if@filesw
        \expandafter\newwrite\csname tf@toc\endcsname
        \immediate\openout \csname tf@toc\endcsname \jobname.toc\relax
    \fi
    \global\@nobreakfalse
    \end{list}%
}
\makeatother

\makeatletter
\renewcommand{\tocsection}[3]{%
    \indentlabel{\@ifnotempty{#2}{\color{toccolor}\ignorespaces#1 #2.\quad}}{\bfseries\color{toccolor}#3}}
\renewcommand{\tocsubsection}[3]{%
    \indentlabel{\@ifnotempty{#2}{\color{toccolor}\ignorespaces#1 #2.\quad}}{\color{toccolor}#3}}
\renewcommand{\tocappendix}[3]{%
    \indentlabel{\@ifnotempty{#2}{\color{toccolor}\ignorespaces#1 #2.\quad}}{\bfseries\color{toccolor}#3}}

\def\l@section{\@tocline{1}{2pt}{0pt}{}{}}
\def\l@subsection{\@tocline{2}{0pt}{1.5em}{}{}}

\def\@tocline#1#2#3#4#5#6#7{\relax
    \ifnum #1>\c@tocdepth
    \else
        \par \addpenalty\@secpenalty\addvspace{#2}%
        \begingroup \hyphenpenalty\@M \small
        \@ifempty{#4}{%
            \@tempdima\csname r@tocindent\number#1\endcsname\relax
        }{%
            \@tempdima#4\relax
        }%
        \parindent\z@ \leftskip#3\relax \advance\leftskip\@tempdima\relax
        \rightskip\@pnumwidth plus4em \parfillskip-\@pnumwidth
        #5\leavevmode\hskip-\@tempdima
        {\color{toccolor}#6}\nobreak\relax
        \leaders\hbox{$\color{toccolor}\m@th
            \mkern 4.5mu\hbox{\normalfont\small\color{toccolor}.}%
            \mkern 4.5mu$}\hfill
        \hbox to\@pnumwidth{\@tocpagenum{\color{toccolor}#7}}\par
        \nobreak
        \endgroup
    \fi}
\makeatother

\usepackage{enumitem}
\setlist{leftmargin=*, itemsep=3pt, parsep=1pt}
\setlist[enumerate,1]{label=(\roman*)}
\setlist[enumerate,2]{label=(\alph*)}

\newcommand{\N}{\mathbb{N}}
\newcommand{\Z}{\mathbb{Z}}

\newcommand{\R}{\mathbb{R}}

\DeclareMathOperator{\sgn}{sgn}

\newcommand{\eps}{\varepsilon}

\newcommand{\defeq}{\coloneqq}

\newcommand{\intv}[2]{\llbracket #1,#2\rrbracket}   

\newcommand{\tta}{\text{\tt a}}
\newcommand{\ttb}{\text{\tt b}}
\newcommand{\wrd}{\mathrm{w}}
\newcommand{\qv}{\mathrm{q}}        
\newcommand{\wu}{\mathrm{u}}        
\newcommand{\wv}{\mathrm{v}}        
\newcommand{\A}{\mathcal{A}}        
\definecolor{seqteal}{HTML}{1F6E73}    
\newcommand{\seqnum}[1]{\href{https://oeis.org/#1}{\textcolor{seqteal}{\textsf{\underline{#1}}}}}

\definecolor{abstractbg}{gray}{0.93}

\makeatletter
\renewenvironment{abstract}{%
    \ifx\maketitle\relax
        \ClassWarning{\@classname}{Abstract should precede
            \protect\maketitle}%
    \fi
    \global\setbox\abstractbox=\vtop\bgroup
        \normalfont\small
        \list{}{\labelwidth\z@
            \leftmargin5pc \rightmargin\leftmargin
            \listparindent\normalparindent \itemindent\z@
            \parsep\z@ \@plus\p@
            
        }%
        \item\relax
        \begin{center}\textbf{Abstract}\end{center}%
        \vspace{-2pt}%
        \noindent\ignorespaces
}{%
    \endlist\egroup
    \ifx\@setabstract\relax \@setabstracta \fi
}

\let\orig@setabstracta\@setabstracta
\renewcommand{\@setabstracta}{%
    \orig@setabstracta
}
\makeatother

\title{Binary binomial equivalence via hyperplane arrangements}

\author{Mehdi Golafshan}
\thanks{Supported by the FNRS Research grant T.196.23 (PDR)}
\thanks{The author is grateful to Michel Rigo for valuable comments and
suggestions on an earlier version of this paper}
\address{Department of Mathematics, University of Li\`{e}ge, Li\`{e}ge, Belgium}
\email{mgolafshan@uliege.be}

\date{}

\begin{document}

\begin{abstract}
Rigo and Salimov (2015) proved that the number of binary \(2\)-binomial
equivalence classes of words of length \(n\) is the \(n^{\text{th}}\) cake
number.  We give a geometric explanation of this identity by constructing an
explicit arrangement of \(n\) planes in three-dimensional space whose chambers
are naturally indexed by these equivalence classes.

This arrangement is the three-dimensional member of an infinite family of
hyperplane arrangements.  In dimensions \(1\), \(2\), and \(3\), the
corresponding quotients recover abelian equivalence, a natural intermediate
equivalence between abelian and \(2\)-binomial equivalence, and binary
\(2\)-binomial equivalence itself.  In higher dimensions, the same family
realises natural refinements of \(2\)-binomial equivalence.

We also determine the sizes of the resulting classes.  Each size is given by a
coefficient of a suitable Gaussian binomial coefficient.  This yields the full
class-size distribution for binary \(2\)-binomial equivalence and stabilisation
results for the number of classes of any fixed cardinality.
\end{abstract}

\maketitle

\vspace{-4pt}
\begin{list}{}{%
    \leftmargin3.5pc \rightmargin3.5pc
    \listparindent0pt \itemindent0pt
    \parsep4pt \topsep0pt \partopsep0pt}
\item\relax
{\small\textbf{Keywords:}\ \ $k$-binomial equivalence;
hyperplane arrangement; cake numbers;
Gaussian binomial coefficient; class-size distribution.}

\item\relax
{\small\textbf{2020 MSC:}\ \ Primary 68R15, 52C35;
Secondary 05A17, 05A19, 05A30, 52C40.}
\end{list}
\vspace{4pt}

\section{Introduction}\label{sec:intro}

The \emph{binomial coefficient} of a word $\wu$ over a word $\wv$
counts the occurrences of $\wv$ as a scattered subword of $\wu$.  Building on
this notion, Rigo and Salimov~\cite{RS} introduced the \emph{$k$-binomial
equivalences} $\sim_k$ on words over a finite alphabet: two words are
$k$-binomially equivalent if every word of length at most $k$ occurs as a
scattered subword of each with the same multiplicity.  The relation
$\sim_1$ is abelian equivalence; larger $k$ gives strictly finer partitions,
all coarser than equality.  The binary fixed-length relations used in this
paper are fixed in \cref{subsec:main-relations}.

The relation $\sim_k$ has been studied from several angles.  Lejeune et
al.~\cite{LejeuneRigoRosenfeld2020} studied the structure of
$2$-binomial classes and proved that the number of
$\sim_2$-classes of length-$n$ words over an $m$-letter alphabet grows as
$\Theta(n^{m^2-1})$; Chrisnata et al.~\cite{Chrisnata2023} sharpened the
asymptotic upper bound for $k \geq 3$; and Freydenberger et
al.~\cite{FreydenbergerEtAl2015} gave polynomial-time algorithms
deciding $\sim_k$.  In a different direction, Golafshan and
Rigo~\cite{GolafRigo1, GolafRigo2} extended the binomial coefficient to
two-dimensional rectangular arrays.

\medskip
\noindent\textbf{The motivating identity.}\enspace
A basic non-trivial case for $\sim_k$ is the binary
alphabet at $k = 2$.  Rigo and Salimov~\cite[Lemma~4]{RS} established
\begin{equation}\label{eq:cake_identity}
\#\bigl(\{\tta, \ttb\}^{n}\,/\,\sim_{2}\bigr)
\;=\;
\frac{n^{3}+5n+6}{6}
\;=\;
\sum_{r=0}^{3}\binom{n}{r}.
\end{equation}
The right-hand side is the \(n^{\text{th}}\) \emph{cake number}~(\seqnum{A000125}): the
number of regions into which $n$ planes in general position divide
$\R^{3}$.  More generally, the Steiner--Schl\"afli formula counts the
chambers of $n$ hyperplanes in general position in $\R^{d}$ as
$\sum_{r=0}^{d}\binom{n}{r}$~\cite{Sch}; see~\cite{AW, Sta2} for historical
and modern accounts.

\medskip
Thus \eqref{eq:cake_identity} shows that an algebraic enumeration of
equivalence classes coincides with the chamber count of a generic arrangement.
This numerical coincidence suggests a geometric model for the quotient
$\{\tta,\ttb\}^{n}/{\sim_{2}}$.  Richomme~\cite{Ric} gave a geometric interpretation of certain binary
$2$-binomial coefficients and described a lattice structure on the
representatives of a fixed $\sim_{2}$-class.  The present point of view is
complementary: instead of studying the geometry inside one class, we construct
one hyperplane arrangement whose chambers are in bijection with the
quotient $\{\tta,\ttb\}^{n}/{\sim_{2}}$.

The work of Lejeune et al.~\cite{LejeuneRigoRosenfeld2020} gives an algebraic
model for the same quotient: $\A^{*}/{\sim_2}$ identifies with the positive
submonoid of the free nilpotent group of class $2$.  In the binary case this
description reduces to the complete invariant
\begin{equation}\label{eq:binary-complete-invariant}
\left(|\wrd|_{\tta},\binom{\wrd}{\tta\ttb}\right),
\end{equation}
the binary fixed-length invariant used throughout the paper.  Our construction is
complementary: it realises this binary quotient as the chamber set of a concrete
hyperplane arrangement.

\medskip\noindent\textbf{Notation.}\enspace
We denote by \(\N:=\{0,1,2,\ldots\}\) the set of non-negative integers, and set
\(\N_{>0}:=\N\setminus\{0\}\).  For \(i,j\in\Z\) with \(i\leq j\), we write
\(\intv{i}{j}:=\{i,i+1,\ldots,j\}\) for the integer interval from \(i\) to \(j\).

\subsection{Main equivalence relations}\label{subsec:main-relations}

Let \(\A\) be a finite alphabet and \(\A^{*}\) the set of finite words over
\(\A\).  For \(\wrd\in\A^{*}\), write \(|\wrd|\) for the
length and, for \(\tta\in\A\), \(|\wrd|_{\tta}\) for the number of occurrences
of \(\tta\) in \(\wrd\).  Given \(\wrd=\wrd_1\cdots\wrd_m\) and \(\wv\) in
\(\A^{*}\), the binomial coefficient
\[
\binom{\wrd}{\wv}:=\#\{1\le i_1<\cdots<i_{|\wv|}\le m :
\wrd_{i_1}\cdots\wrd_{i_{|\wv|}}=\wv\}.
\]
We adopt the conventions \(\binom{\wrd}{\eps}=1\) (\(\eps\) the empty word) and \(\binom{\wrd}{\wv}=0\) for
\(|\wv|>|\wrd|\) (see~\cite[Section~6.3]{Lot}).

\begin{definition}[{\cite[Definition~2]{RS}}]\label{def:k-binomial}
Let \(k\in\N_{>0}\cup\{\infty\}\) and \(\wu,\wrd\in\A^{*}\).  We say \(\wu\)
and \(\wrd\) are \emph{\(k\)-binomially equivalent} if
\(\binom{\wu}{\wv}=\binom{\wrd}{\wv}\) for every \(\wv\in\A^{*}\) with
\(|\wv|\le k\).
\end{definition}

In this paper, \(k\)-binomial equivalence is denoted by \(\wrd\sim_k\wu\).
Since \(\binom{\wu}{\tta}=|\wu|_{\tta}\) for every \(\tta\in\A\), the relation
\(\sim_1\) is abelian equivalence; and \(\sim_{k+1}\subseteq\sim_k\) for every
\(k\in\N_{>0}\).

We focus on the binary alphabet \(\A=\{\tta,\ttb\}\).  Let \(\wrd,\wu\in\{\tta,\ttb\}^n\); then we have
\begin{itemize}
\item \(\wrd\sim_1\wu \iff |\wrd|_{\tta}=|\wu|_{\tta}\);
\item \(\wrd\sim_2\wu \iff |\wrd|_{\tta}=|\wu|_{\tta}\) and
\(\binom{\wrd}{\tta\ttb}=\binom{\wu}{\tta\ttb}\).
\end{itemize}

\begin{example}\label{ex:abelian-vs-2-binom}
Let \(w_1=\tta\ttb\ttb\tta\), \(w_2=\ttb\tta\tta\ttb\), and
\(w_3=\tta\tta\ttb\ttb\).  They are pairwise abelian
equivalent; \(w_1\sim_2 w_2\), while \(w_3\) is
\(2\)-binomially equivalent to neither.
\end{example}

Set
\[
\operatorname{strip}(\wrd):=\begin{cases}
\Bigl\lfloor\binom{\wrd}{\tta\ttb}\bigm/|\wrd|_{\tta}\Bigr\rfloor & \text{if } |\wrd|_{\tta}>0, \\[2pt]
0 & \text{if } |\wrd|_{\tta}=0.
\end{cases}
\]

\begin{definition}\label{def:three-halves}\label{def:sim-32}
Let \(\wu,\wrd\in\{\tta,\ttb\}^*\).  We say that \(\wu\) and \(\wrd\) are
\emph{\(3/2\)-binomially equivalent} if
\[
   \wu\sim_1\wrd
   \qquad\text{and}\qquad
   \operatorname{strip}(\wu)=\operatorname{strip}(\wrd).
\]
We denote this relation by \(\wu\sim_{3/2}\wrd\).
\end{definition}

\begin{remark}
The notation \(\sim_{3/2}\) does not extend \(k\)-binomial equivalence to
non-integer \(k\).  It indicates that \(\sim_{3/2}\) lies strictly between the
first two integer levels of the \(k\)-binomial hierarchy:
\(\sim_2 \subsetneq \sim_{3/2} \subsetneq \sim_1\).
\end{remark}

\begin{example}\label{ex:three-halves}
Let \(w_1=\tta\ttb\tta\ttb\), \(w_2=\tta\ttb\ttb\tta\), and
\(w_3=\tta\tta\ttb\ttb\).  They are pairwise abelian equivalent;
\(w_1\sim_{3/2} w_2\) but \(w_1\not\sim_2 w_2\), while \(w_3\) is
\(3/2\)-binomially equivalent to neither, because
\(\operatorname{strip}(w_1)=\operatorname{strip}(w_2)=1\) and \(\operatorname{strip}(w_3)=2\).
\end{example}

Associate to each \(\wrd\in\{\tta,\ttb\}^n\) the Young diagram
\(\lambda(\wrd):=(r_1,\ldots,r_{|\wrd|_{\tta}})\), where \(r_i\) is the number of
\(\ttb\)'s to the right of the \(i^{\text{th}}\) \(\tta\) in \(\wrd\); then
\(r_1\ge\cdots\ge r_{|\wrd|_{\tta}}\ge 0\) and
\(|\lambda(\wrd)|=\binom{\wrd}{\tta\ttb}\).  Explicitly, writing
\[
\wrd=\ttb^{i_0}\tta\ttb^{i_1}\tta\ttb^{i_2}\tta\cdots\tta\ttb^{i_{|\wrd|_{\tta}}},
\]
we have \(r_j=i_j+i_{j+1}+\cdots+i_{|\wrd|_{\tta}}\).

\begin{definition}\label{def:s-tail}
Let \(\wrd\in\{\tta,\ttb\}^n\), \(s\in\N\), and \(\ell:=\min(s,|\wrd|_{\tta})\).
The \emph{\(s\)-tail} of \(\wrd\) is
\(\mathcal{T}_s(\wrd):=(r_{|\wrd|_{\tta}},\ldots,r_{|\wrd|_{\tta}-\ell+1})\),
empty when \(\ell=0\).
\end{definition}

We let \(\theta=r_{|\wrd|_{\tta}-\ell+1}\) denote the last entry of \(\mathcal{T}_s(\wrd)\)
and \(|\mathcal{T}_s(\wrd)|\) its sum; both are \(0\) when \(\ell=0\).  Note that
\(\ell\) and \(\theta\) depend on \(s\), a dependence left implicit in the notation.

\begin{example}\label{ex:s-tail}
Let \(\wrd=\tta\ttb\ttb\tta\ttb\), so \(\lambda(\wrd)=(3,1)\).  Then
\(\mathcal{T}_0(\wrd)=\varnothing\), \(\mathcal{T}_1(\wrd)=(1)\), and
\(\mathcal{T}_s(\wrd)=(1,3)\) for \(s\ge 2\).
\end{example}

The following equivalence relations provide the word-combinatorial
classification of chambers in dimension greater than three.

\begin{definition}\label{def:approx-s}
Let \(s\in\N\) and \(\wu,\wrd\in\{\tta,\ttb\}^n\).  We say that \(\wu\) and
\(\wrd\) are \emph{\(s\)-quasi-binomially equivalent} if \(\wu\sim_2\wrd\) and
\(\mathcal{T}_s(\wu)=\mathcal{T}_s(\wrd)\).
\end{definition}

We write \(\wu\approx_s\wrd\) for \(s\)-quasi-binomial equivalence, and
\([\wrd]_s\) for the \(\approx_s\)-class of \(\wrd\).
By construction \(\approx_0=\sim_2\) and \(\approx_{s+1}\subseteq\approx_s\).

\begin{example}\label{ex:approx-s}
Let \(\wu=\tta\ttb\ttb\ttb\tta\tta\) and \(\wrd=\ttb\tta\ttb\tta\ttb\tta\),
with \(\lambda(\wu)=(3,0,0)\) and \(\lambda(\wrd)=(2,1,0)\).  Then
\(\wu\sim_2\wrd\), they share the \(1\)-tail \((0)\), but their \(2\)-tails
\((0,0)\) and \((0,1)\) differ; hence \(\wu\approx_1\wrd\) but
\(\wu\not\approx_2\wrd\).
\end{example}

\subsection{Main results and contributions}

For \(d\in\N_{>0}\), let \(\mathcal{H}_n^{(d)}\) be the arrangement in \(\R^d\)
whose hyperplanes are
\[
H_i^{(d)}
=
\{\mathbf{x}=(x_1,\ldots,x_d)\in\R^d:
1+x_1i+x_2i^2+\cdots+x_di^d=0\}
\]
for \(i\in\intv{1}{n}\).
The paper has two aims: to realise the binomial-equivalence quotients of
\(\{\tta,\ttb\}^n\) as the chambers of \(\mathcal{H}_n^{(d)}\), and to enumerate the
resulting classes.

\medskip\noindent\textbf{A chamber model for binomial equivalence.}\enspace
The key case is dimension three: the chambers of
\(\mathcal{H}_n^{(3)}\) are in bijection with the \(\sim_2\)-classes of
\(\{\tta,\ttb\}^n\), recasting the cake-number identity~\eqref{eq:cake_identity} as a
chamber count.  Higher dimensions are matched by the \(s\)-quasi-binomial relations
\(\approx_s\), with \(\approx_0=\sim_2\), describing the chambers for \(d\geq 4\); the
two lowest cases recover abelian equivalence \(\sim_1\) at \(d=1\) and the intermediate
relation \(\sim_{3/2}\) at \(d=2\), the latter lying strictly between \(\sim_1\) and
\(\sim_2\).  These cases combine into a single chamber-to-class bijection, valid for
every \(d\in\N_{>0}\):
\[
\operatorname{Ch}(\mathcal{H}_n^{(d)})
\xleftrightarrow{\;\Psi_{d,n}\;}
\begin{cases}
\{\tta,\ttb\}^n/\sim_1, & \text{if } d=1,\\[1mm]
\{\tta,\ttb\}^n/\sim_{3/2}, & \text{if } d=2,\\[1mm]
\{\tta,\ttb\}^n/\sim_2, & \text{if } d=3,\\[1mm]
\{\tta,\ttb\}^n/\!\approx_{d-3}, & \text{if } d\geq 4.
\end{cases}
\]
This bijection is proved in \cref{sec:higher-tail} for \(d\ge 3\) and in
\cref{sec:lower-dim} for \(d=1,2\), using the general position and chamber sign words
established in \cref{sec:polyarr}.  The arrangement therefore has
\(\sum_{r=0}^{d}\binom{n}{r}\) chambers, the entry \(B(n,d)\) of Bernoulli's triangle;
its first four columns realise the relations \(\sim_1,\sim_{3/2},\sim_2,\approx_1\);
see \cref{fig:bernoulli-triangle}.

\begin{figure}[H]
\centering
\definecolor{btC1}{HTML}{6A2C70}   
\definecolor{btC2}{HTML}{B83B5E}   
\definecolor{btC3}{HTML}{F08A5D}   
\definecolor{btC4}{HTML}{F9B248}   
\definecolor{btL3}{HTML}{CC5A2E}   
\definecolor{btL4}{HTML}{B8770E}   
\definecolor{btBg}{HTML}{E8ECF1}   
\definecolor{btInk}{HTML}{334155}  
\definecolor{btDk}{HTML}{243044}   
\definecolor{btMute}{HTML}{64748B} 
\begin{tikzpicture}[
    x=5.1em, y=2.6em,
    cellbase/.style={minimum width=3.5em, minimum height=2.4em,
                     rounded corners=4pt, inner sep=2pt,
                     font=\normalsize, draw=none},
    cA/.style={cellbase, fill=btC1, text=white},
    cB/.style={cellbase, fill=btC2, text=white},
    cC/.style={cellbase, fill=btC3, text=btDk},
    cD/.style={cellbase, fill=btC4, text=btDk},
    c0/.style={cellbase, fill=btBg, text=btInk},
    nlab/.style={font=\small, anchor=east, text=btMute},
    dlab/.style={font=\small, anchor=south, text=btInk},
    seqname/.style={font=\footnotesize, anchor=north, text=btMute,
                    align=center, inner sep=1pt},
    seqcode/.style={font=\footnotesize, anchor=north,
                    align=center, inner sep=2pt},
]
\foreach \n/\k/\val/\st in {%
    0/0/1/c0,
    1/0/1/c0, 1/1/2/cA,
    2/0/1/c0, 2/1/3/cA, 2/2/4/cB,
    3/0/1/c0, 3/1/4/cA, 3/2/7/cB, 3/3/8/cC,
    4/0/1/c0, 4/1/5/cA, 4/2/11/cB, 4/3/15/cC, 4/4/16/cD,
    5/0/1/c0, 5/1/6/cA, 5/2/16/cB, 5/3/26/cC, 5/4/31/cD, 5/5/32/c0,
    6/0/1/c0, 6/1/7/cA, 6/2/22/cB, 6/3/42/cC, 6/4/57/cD, 6/5/63/c0, 6/6/64/c0%
} \node[\st] at (\k, -\n) {\val};
\foreach \n in {0,...,6}
  \node[nlab] at (-0.5, -\n) {$n=\n$};
\node[font=\small, anchor=south, text=btC1] at (1,1.40){$\sim_1$};
\node[font=\small, text=btInk, rotate=90] at (1,0.92){$d=1$};
\node[font=\small, anchor=south, text=btC2] at (2,1.40){$\sim_{3/2}$};
\node[font=\small, text=btInk, rotate=90] at (2,0.92){$d=2$};
\node[font=\small, anchor=south, text=btL3] at (3,1.40){$\sim_2$};
\node[font=\small, text=btInk, rotate=90] at (3,0.92){$d=3$};
\node[font=\small, anchor=south, text=btL4] at (4,1.40){$\approx_1$};
\node[font=\small, text=btInk, rotate=90] at (4,0.92){$d=4$};
\node[seqname] at (1, -6.66) {line-cutting};
\node[seqcode] at (1, -7.10) {\seqnum{A000027}};
\node[seqname] at (2, -6.66) {lazy caterer};
\node[seqcode] at (2, -7.10) {\seqnum{A000124}};
\node[seqname] at (3, -6.66) {cake numbers};
\node[seqcode] at (3, -7.10) {\seqnum{A000125}};
\node[seqname] at (4, -6.66) {circle-cutting};
\node[seqcode] at (4, -7.10) {\seqnum{A000127}};
\end{tikzpicture}
\caption{Bernoulli's triangle, with the four columns \(d = 1, 2, 3, 4\)
highlighted.}
\label{fig:bernoulli-triangle}
\end{figure}

\medskip\noindent\textbf{Enumeration of the classes.}\enspace
\Cref{sec:class-size} computes the class sizes.  The size of each \(\approx_s\)-class
is a single coefficient of a Gaussian binomial
coefficient, the \(\sim_2\)- and \(\sim_{3/2}\)-counts following as the case \(s=0\)
and as a coarsening.  Their distribution is determined as well: for binary
\(2\)-binomial equivalence, the number of classes of a given size \(K\geq 2\) is
eventually linear in \(n\) when \(K\) is a partition number and eventually constant
otherwise; an analogous stabilisation holds for \(\sim_{3/2}\), and the singleton
\(\approx_s\)-classes are counted in closed form.

\subsection{Relation to prior work}

Binomial equivalence differs from \(k\)-abelian equivalence: both have abelian
equivalence as their first level, but \(k\)-abelian equivalence counts contiguous
factors, whereas \(k\)-binomial equivalence counts scattered subwords.  The sizes of
\(k\)-abelian classes were determined by Karhum\"{a}ki et al.~\cite{KPRW17}; the present paper counts the binomial classes instead.

The paper draws on two classical ingredients.  First, the arrangement
\(\mathcal{H}_{n}^{(d)}\) is dual to the points \((i,i^{2},\ldots,i^{d})\)
for \(i\in\intv{1}{n}\), and its chamber sign vectors are the positive-tope restrictions of the
alternating (or cyclic) oriented matroid~\cite[Section~9.4]{BLSWZ}.  The run
description used here is therefore a word-combinatorial form of a classical
sign-variation phenomenon, familiar from total positivity~\cite{Kar} and from Gale's
evenness condition for cyclic polytopes~\cite{Gal}; the novelty is not the phenomenon
but its use to model binary binomial quotients by chambers.

Second, the counting rests on a partition identity: the coefficient of \(\qv^{N}\) in
the Gaussian binomial coefficient \(\binom{r+c}{r}_{\!\qv}\) is the number of
partitions of \(N\) fitting in an \(r\times c\) rectangle (see Andrews~\cite{And98} or
Stanley~\cite[Section~1.8]{Sta12}), with asymptotics refined by Melczer et al.~\cite{MPP20}.  Here the Young-diagram encoding sends the free part of an
\(\approx_{s}\)-class to exactly such a bounded partition, and the residual area
selects the required coefficient, again turning a classical identity into exact
class-size formulas.


\section{The arrangement \texorpdfstring{\(\mathcal{H}_{n}^{(d)}\)}{Hn(d)} and chamber words}\label{sec:polyarr}

Let \(d, n \in \N_{>0}\).
A \emph{hyperplane arrangement} in $\R^{d}$ is a finite collection
$\mathcal{H} = \{H_{1}, \ldots, H_{n}\}$ of affine hyperplanes in $\R^{d}$.
A \emph{chamber} of $\mathcal{H}$ is a connected component of the complement
$\R^{d} \setminus \bigcup_{i=1}^{n} H_{i}$.
We write $\operatorname{Ch}(\mathcal{H})$ for the set of chambers of
$\mathcal{H}$.
For instance, three lines in general position in $\R^{2}$ form an arrangement
with seven chambers: one bounded triangle and six unbounded regions; see
\cref{fig:pizza-d2}.

If each $H_{i}$ is defined by the equation $L_{i}(\mathbf{x}) = 0$ for some
affine form $L_{i} \colon \R^{d} \to \R$, the \emph{sign vector} of a
point $\mathbf{x} \in \R^{d} \setminus \bigcup_{i=1}^{n} H_{i}$ is
\[
    \sigma(\mathbf{x}) \defeq
    \bigl(\sgn(L_{1}(\mathbf{x})),\, \ldots,\, \sgn(L_{n}(\mathbf{x}))\bigr)
    \in \{+,-\}^{n}.
\]
Two points of $\R^{d} \setminus \bigcup_{i=1}^{n} H_{i}$ lie in the same
chamber iff they have the same sign vector; consequently, the
chambers of $\mathcal{H}$ are in bijection with the sign vectors in the
image of $\sigma$.

An arrangement $\mathcal{H}$ in $\R^{d}$ is said to be in \emph{general
position}, or \emph{generic}, if, for every $r \in \intv{1}{d}$, any $r$
distinct hyperplanes of $\mathcal{H}$ intersect in an affine subspace of
codimension~$r$, and any $d + 1$ distinct hyperplanes of $\mathcal{H}$ have
empty intersection.

We will need the following classical formula counting the chambers of a
generic arrangement.

\begin{theorem}[Steiner~\cite{Ste}; Schl\"{a}fli~\cite{Sch}]\label{thm:SS}
For any arrangement $\mathcal{H}$ of $n$ hyperplanes in general position in
$\R^{d}$,
\[
    \#\operatorname{Ch}(\mathcal{H}) \;=\; \sum_{r=0}^{d}\binom{n}{r}.
\]
\end{theorem}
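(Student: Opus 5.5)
We argue by induction on the number \(n\) of hyperplanes, using the standard deletion–restriction recurrence for chamber counts. Write \(c(n,d)\) for the number of chambers of any generic arrangement of \(n\) hyperplanes in \(\R^{d}\); part of the argument is to show that this number depends only on \(n\) and \(d\). The base cases are immediate. For \(n=0\) the complement is all of \(\R^{d}\), so there is one chamber, matching \(\sum_{r}\binom{0}{r}=1\). For \(d=0\), read \(\R^{0}\) as a point; genericity forces there to be no hyperplanes, so again there is one chamber, matching \(\binom{n}{0}=1\).

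For the inductive step, take a generic \(\mathcal{H}=\{H_1,\ldots,H_n\}\) in \(\R^{d}\) with \(n\ge 1\), and let \(\mathcal{H}'=\{H_1,\ldots,H_{n-1}\}\).

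\begin{enumerate}
\item Subsets of a generic arrangement are generic, so \(\mathcal{H}'\) is generic and by induction it has \(c(n-1,d)\) chambers.
\item Consider the restriction \(\mathcal{H}''=\{H_i\cap H_n : i<n\}\) inside \(H_n\cong\R^{d-1}\). Genericity of \(\mathcal{H}\) makes each \(H_i\cap H_n\) an affine hyperplane of \(H_n\), and these are pairwise distinct: if \(H_i\cap H_n=H_j\cap H_n\), then three hyperplanes would meet in codimension \(2\) rather than \(3\), or, when \(d\le 2\), \(d+1\) hyperplanes would meet nonemptily.
\item The codimension conditions transfer to \(\mathcal{H}''\). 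Any \(r\) of its members correspond to \(r+1\) hyperplanes of \(\mathcal{H}\); their intersection has codimension \(r+1\) in \(\R^{d}\), hence codimension \(r\) in \(H_n\). Any \(d\) of them correspond to \(d+1\) hyperplanes of \(\mathcal{H}\), whose intersection is empty. So \(\mathcal{H}''\) is generic in \(\R^{d-1}\), with \(c(n-1,d-1)\) chambers by induction.
\end{enumerate}

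The key geometric step is that adding \(H_n\) to \(\mathcal{H}'\) increases the chamber count by exactly the number of chambers of \(\mathcal{H}''\). Each chamber \(C\) of \(\mathcal{H}'\) is convex. If \(C\cap H_n=\varnothing\), then \(C\) stays a single chamber. If \(C\cap H_n\neq\varnothing\), then \(H_n\) splits the convex set \(C\) into exactly two convex pieces, namely its intersections with the two open half-spaces of \(H_n\). The nonempty sets \(C\cap H_n\) are exactly the chambers of \(\mathcal{H}''\): each is convex and avoids every \(H_i\cap H_n\), and conversely a chamber of \(\mathcal{H}''\) is a connected subset of \(\R^{d}\setminus\bigcup_{i<n}H_i\), so it lies in a unique chamber of \(\mathcal{H}'\). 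Alternatively, this identification can be phrased with sign vectors, using the fact already noted that chambers correspond to realised sign vectors.

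This gives the recurrence
\[
c(n,d)=c(n-1,d)+c(n-1,d-1).
\]
Pascal's rule \(\binom{n}{r}=\binom{n-1}{r}+\binom{n-1}{r-1}\) then yields
\[
\sum_{r=0}^{d}\binom{n-1}{r}+\sum_{r=0}^{d-1}\binom{n-1}{r}=\sum_{r=0}^{d}\binom{n}{r},
\]
which closes the induction.

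I expect the main obstacle to be the careful verification that the restriction \(\mathcal{H}''\) is again generic, including the degenerate small cases \(d=1\) and \(n\le d\). In particular, the condition on \(d\) hyperplanes of \(\mathcal{H}''\) must be matched with the emptiness condition on \(d+1\) hyperplanes of \(\mathcal{H}\); this matching is what keeps the two indices \(n\) and \(d\) of the recurrence aligned. The convexity argument for the splitting step is routine by comparison.

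Since the theorem is classical (Steiner, Schläfli), the paper may instead simply cite it, for example via Zaslavsky's theorem applied to the intersection poset of a generic arrangement, whose characteristic polynomial is \(\sum_{r\le d}(-1)^r\binom{n}{r}t^{d-r}\).
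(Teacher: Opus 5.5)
Your deletion--restriction induction is correct. It is a genuinely different route, because the paper gives no proof of \cref{thm:SS}. It cites Steiner and Schl\"afli and refers to Zaslavsky and Stanley. There the count comes from Zaslavsky's theorem $\#\operatorname{Ch}(\mathcal{H})=(-1)^{d}\chi_{\mathcal{H}}(-1)$, applied to the intersection poset of a generic arrangement, which is the Boolean lattice on $n$ atoms truncated at rank $d$.

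The only chamber count the paper actually carries out is specific to $\mathcal{H}_n^{(d)}$. \Cref{thm:run-bijection} identifies its chambers with the words $\mathbf{u}$ for which $+\mathbf{u}$ has at most $d+1$ runs, and choosing the sign-change positions gives $\sum_{r\le d}\binom{n}{r}$ directly. So for the paper's own purposes \cref{thm:SS} is logically dispensable.

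Your argument buys generality: it covers every generic arrangement and shows the count depends only on $(n,d)$, with no M\"obius-function machinery. In effect it is the recurrence $r(\mathcal{H})=r(\mathcal{H}')+r(\mathcal{H}'')$ behind Zaslavsky's theorem, specialised to the generic case. The paper's route instead buys an explicit sign-word label for each chamber of its particular arrangement, which is exactly what the later chamber-to-class bijections use.

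One small repair. Your second step, claiming the traces $H_i\cap H_n$ are pairwise distinct affine hyperplanes of $H_n$, fails when $d=1$: every trace is then empty. This also clashes with your $d=0$ base case, where you say genericity forces no hyperplanes, yet the recurrence at $d=1$ needs $c(n-1,0)=1$ for $n-1$ empty traces. You can fix this in either of two ways:
\begin{itemize}
\item start the induction at $d=1$, where $n$ distinct points cut $\R$ into $n+1$ intervals; or
\item state the increment as the number of chambers of $\mathcal{H}'$ that meet $H_n$, which for $d=1$ is $1$ because $H_n$ is a single point off the other hyperplanes.
\end{itemize}
It is also worth saying explicitly that both pieces of $C$ cut by $H_n$ are nonempty, since $C$ is open and meets $H_n$.
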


Modern expositions include Zaslavsky~\cite{Zas} and
Stanley~\cite[Section~3.11]{Sta12}; see also~\cite{Sta2}.

\subsection{The arrangement \texorpdfstring{\(\mathcal{H}_{n}^{(d)}\)}{Hn(d)}}\label{subsec:arrangement-Hd}

We now give, for $d, n \in \N_{>0}$, a concrete arrangement
in general position. For $i \in \intv{1}{n}$, let
$H_{i}^{(d)} \subset \R^{d}$ be the affine hyperplane defined below,
and set $\mathcal{H}_{n}^{(d)} = \{H_{1}^{(d)}, \ldots, H_{n}^{(d)}\}$:
\begin{equation}\label{eq:Hid}
    H_{i}^{(d)} \colon\; 1 + i x_{1} + i^{2} x_{2} + \cdots + i^{d} x_{d} = 0.
\end{equation}

\begin{proposition}\label{prop:gen-position}
The arrangement $\mathcal{H}_{n}^{(d)}$ is in general position.
\end{proposition}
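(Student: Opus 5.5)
The plan is to reduce both conditions in the definition of general position to statements about Vandermonde-type matrices built from the distinct positive integers \(i\in\intv{1}{n}\).

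\emph{Intersections of \(r\le d\) hyperplanes.} Fix an \(r\)-subset \(S=\{i_1<\cdots<i_r\}\subseteq\intv{1}{n}\) with \(1\le r\le d\). The intersection \(\bigcap_{i\in S}H_i^{(d)}\) is the solution set of the linear system \(M\mathbf{x}=-\mathbf{1}\). Here \(M\) is the \(r\times d\) matrix with rows \((i_k,i_k^2,\ldots,i_k^d)\), \(k\in\intv{1}{r}\). Consider the \(r\times r\) minor of \(M\) on the first \(r\) columns. Factoring \(i_k\) out of row \(k\) gives
\[
\det\bigl(i_k^{j}\bigr)_{1\le k,j\le r}=\Bigl(\prod_{k=1}^r i_k\Bigr)\prod_{1\le k<l\le r}(i_l-i_k).
\]
This is nonzero because the \(i_k\) are distinct and positive. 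Hence \(M\) has rank \(r\), the system is consistent, and its solution set is an affine subspace of dimension \(d-r\), that is, of codimension \(r\).

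\emph{Empty intersection of \(d+1\) hyperplanes.} Take a \((d+1)\)-subset \(\{i_0<\cdots<i_d\}\). A common point \(\mathbf{x}\) would give a vector \((1,x_1,\ldots,x_d)\) in the kernel of the \((d+1)\times(d+1)\) matrix \(\bigl(i_k^{j}\bigr)_{0\le k,j\le d}\). This is a genuine Vandermonde matrix with distinct nodes, so it is invertible. Its kernel is therefore trivial, which contradicts the first coordinate being \(1\). Equivalently, the polynomial \(p(t)=1+x_1t+\cdots+x_dt^d\) has degree at most \(d\) and is nonzero, since \(p(0)=1\), so it cannot vanish at \(d+1\) distinct points.

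There is no real obstacle here. The only point needing care is the first case: one must use the minor that omits the constant column, and check that the factor \(\prod_k i_k\) is nonzero. This is where \(i\ge 1\) is used, since the node \(0\) is excluded.
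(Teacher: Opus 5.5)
Your proof is correct and follows essentially the same route as the paper. The $d+1$ case rests on the nonzero polynomial $1+x_1t+\cdots+x_dt^d$ (equivalently, an invertible Vandermonde matrix) having at most $d$ roots, and the $r\le d$ case on a nonvanishing $r\times r$ Vandermonde-type minor of the coefficient matrix. You are slightly more explicit than the paper: you name the minor on the first $r$ columns, give its factorisation $\bigl(\prod_k i_k\bigr)\prod_{k<l}(i_l-i_k)$, and note that full row rank makes the system consistent.
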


\begin{proof}
For each $\mathbf{x} = (x_{1}, \ldots, x_{d}) \in \R^{d}$, define the
polynomial $P_{\mathbf{x}}(t) = 1 + x_{1} t + x_{2} t^{2} + \cdots + x_{d} t^{d}$.
By~\eqref{eq:Hid}, for every $\mathbf{x} \in \R^{d}$ and every
$H_{i}^{(d)} \in \mathcal{H}_{n}^{(d)}$,
$\mathbf{x} \in H_{i}^{(d)}$ iff $P_{\mathbf{x}}(i) = 0$.
Since $P_{\mathbf{x}}(0) = 1$ and $\deg P_{\mathbf{x}} \leq d$, the polynomial
$P_{\mathbf{x}}$ is non-zero and has at most $d$ roots; hence no point of
$\R^{d}$ lies on more than $d$ of the hyperplanes $H_{i}^{(d)}$, and any
$d + 1$ of them have empty intersection.

Fix $r \in \intv{1}{d}$ and $1 \leq i_{1} < \cdots < i_{r} \leq n$. The
intersection $\bigcap_{s=1}^{r} H_{i_{s}}^{(d)}$ is the solution set in
$\R^{d}$ of the linear system
$\sum_{q=1}^{d} i_{s}^{q}\, x_{q} = -1$ for $s \in \intv{1}{r}$.
An $r \times r$ minor of its coefficient matrix is a non-zero
Vandermonde-type determinant; hence the coefficient matrix has full rank~$r$,
and the solution set is an affine subspace of $\R^{d}$ of codimension~$r$.
\end{proof}

\subsection{Chamber sign words and runs}\label{subsec:chamber-words-runs}

A \emph{run} in a word $\wrd \in \{+,-\}^{*}$ is a maximal block of consecutive
equal letters; equivalently, the number of runs of $\wrd$ is one more than the
number of positions at which $\wrd$ changes letter. For instance, the word
$\mathord{+}\mathord{+}\mathord{-}\mathord{-}\mathord{+}$ has three runs.

Throughout this subsection, sign words are read in the increasing order
\(H_1^{(d)},\ldots,H_n^{(d)}\) of the hyperplanes. Since \(P_{\mathbf{x}}(0)>0\),
we augment every chamber sign word \(\mathbf{u}\in\{+,-\}^{n}\) with a
fixed \(+\) at position \(0\); this is why the run condition below is stated for
the augmented word \(+\mathbf{u}\). To translate sign words into binary words,
we use \(-\mapsto\tta\) and \(+\mapsto\ttb\), the opposite choice merely
exchanging \(\tta\) and \(\ttb\).

For a chamber $C$ of $\mathcal{H}_{n}^{(d)}$, recall the polynomial
$P_{\mathbf{x}}$ from the proof of \cref{prop:gen-position} and define the
\emph{sign word} of $C$ by
\[
    \sigma(C) = \sigma_{1} \cdots \sigma_{n} \in \{+,-\}^{n},
    \qquad
    \sigma_{i} = \sgn P_{\mathbf{x}}(i),
\]
for any $\mathbf{x} = (x_{1}, \ldots, x_{d}) \in C$. Since
$\mathbf{x} \in H_{i}^{(d)}$ iff $P_{\mathbf{x}}(i) = 0$ and $C$ misses every
$H_{i}^{(d)}$, the value $P_{\mathbf{x}}(i)$ is non-zero on $C$; being
continuous in $\mathbf{x}$, it is of constant sign on the connected set $C$, so
$\sigma(C)$ is well defined.

\begin{proposition}\label{thm:run-bijection}
The map $C \mapsto \sigma(C)$ is a bijection between the chambers of
$\mathcal{H}_{n}^{(d)}$ and the words $\mathbf{u} \in \{+,-\}^{n}$ for which
the augmented word $+\mathbf{u}$ has at most $d + 1$ runs.
\end{proposition}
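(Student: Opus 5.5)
Injectivity of $C\mapsto\sigma(C)$ is already known, since chambers are in bijection with realised sign vectors. So the plan is to identify the image, proving two inclusions and then checking the count against \cref{thm:SS} as a consistency check.

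\emph{Necessity.} Fix $\mathbf{x}\in C$ and consider the augmented sequence of values $P_{\mathbf{x}}(0)=1,P_{\mathbf{x}}(1),\ldots,P_{\mathbf{x}}(n)$, whose signs form $+\sigma(C)$. Every change of letter between positions $j-1$ and $j$ forces, by the intermediate value theorem, a real root of $P_{\mathbf{x}}$ in the open interval $(j-1,j)$. These intervals are disjoint, so $k$ sign changes give $k$ distinct real roots. Since $P_{\mathbf{x}}$ is non-zero (constant term $1$) of degree at most $d$, we get $k\le d$, that is, at most $d+1$ runs.

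\emph{Sufficiency.} Let $\mathbf{u}\in\{+,-\}^n$ be such that $+\mathbf{u}$ has $k+1\le d+1$ runs. Let the changes occur between positions $j_m-1$ and $j_m$ for $m\in\intv{1}{k}$. I will take
\[
Q(t)=\prod_{m=1}^{k}\Bigl(1-\frac{t}{j_m-\tfrac12}\Bigr).
\]
This polynomial has degree $k\le d$, satisfies $Q(0)=1$, and has simple roots exactly at the half-integers $j_m-\tfrac12$. Its sign therefore flips precisely across those points, so $\sgn Q(i)$ for $i\in\intv{0}{n}$ reproduces $+\mathbf{u}$. Write $Q(t)=1+x_1t+\cdots+x_dt^d$, padding with zeros when $k<d$. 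The point $\mathbf{x}=(x_1,\ldots,x_d)$ then avoids every $H_i^{(d)}$, since $Q(i)\neq0$, and lies in a chamber with sign word $\mathbf{u}$.

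\emph{Consistency check.} The number of words $\mathbf{u}$ with exactly $k$ changes in $+\mathbf{u}$ equals the number of ways to choose the $k$ change positions among the $n$ gaps $(0,1),\ldots,(n-1,n)$, namely $\binom{n}{k}$. Summing over $k\le d$ gives $\sum_{k=0}^{d}\binom nk$, which agrees with \cref{thm:SS} via \cref{prop:gen-position}. This is not needed for the proof.

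\emph{Where the care lies.} No step is hard. The points to state precisely are that each sign change yields a root in its own disjoint interval, and that the explicit product $Q$ really has constant term $1$, so that it lies in the parameterised family.
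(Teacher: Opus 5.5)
Your proof is correct and follows the paper's argument essentially step for step: necessity comes from the intermediate value theorem applied to $P_{\mathbf{x}}(0),\ldots,P_{\mathbf{x}}(n)$, and sufficiency from an explicit product $\prod(1-t/\alpha_r)$ that has one root strictly between each pair of consecutive positions where $+\mathbf{u}$ changes sign. The paper allows any $\alpha_r\in(i_r-1,i_r)$, so your half-integer choice is a special case, and the paper records the same $\sum_{r\le d}\binom{n}{r}$ count immediately after the proposition.
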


\begin{proof}
No two chambers share the same sign vector, so the map $C \mapsto \sigma(C)$
is a bijection onto its image; it remains to identify which words occur as
sign words.

\emph{Necessity.} For $\mathbf{x} \in C$ and $\mathbf{u} = \sigma(C)$, the
polynomial $P_{\mathbf{x}}$ has degree at most $d$ and satisfies
$P_{\mathbf{x}}(0) > 0$. Each sign change between consecutive terms of
$P_{\mathbf{x}}(0), P_{\mathbf{x}}(1), \ldots, P_{\mathbf{x}}(n)$ forces, by
the intermediate value theorem, a distinct real root of $P_{\mathbf{x}}$ in
the interval $(0, n)$. Since $P_{\mathbf{x}}$ has at most $d$ real roots, the augmented
word $+\mathbf{u}$ has at most $d + 1$ runs.

\emph{Sufficiency.} Suppose $+\mathbf{u}$ has $m + 1$ runs, with
$m \in \intv{0}{d}$, and let $i_{1} < \cdots < i_{m}$ be the positions where it
changes sign. Choose $\alpha_{r} \in (i_{r} - 1, i_{r})$ and set
\[
    Q(t) = \prod_{r=1}^{m} \Bigl(1 - \frac{t}{\alpha_{r}}\Bigr),
\]
with $Q \equiv 1$ when $m = 0$. Then $Q$ has degree $m \leq d$, satisfies $Q(0) = 1$,
and its only real roots are the non-integer points $\alpha_{r}$, so
$Q(i) \neq 0$ for $i \in \intv{1}{n}$. As $i$ ranges over $\intv{0}{n}$,
$\sgn Q(i)$ changes precisely when $i$ passes some $\alpha_{r}$; since
$Q(0) = 1$, this gives $\sgn Q(i) = (+\mathbf{u})_{i}$ for all $i$.
Expressing $Q(t)$ as $1 + x_{1} t + \cdots + x_{m} t^{m}$ and
setting $x_{r} = 0$ for $r > m$ yields $\mathbf{x} \in \R^{d}$ with
$P_{\mathbf{x}} = Q$, lying in a chamber whose sign word is $\mathbf{u}$.
\end{proof}

\begin{example}\label{ex:sign-word-runs}
Take \(d=3\), \(n=6\), and
\[
  P_{\mathbf{x}}(t)=\left(1-\frac{2t}{3}\right)\left(1-\frac{2t}{7}\right)\left(1-\frac{2t}{11}\right).
\]
The signs of \(P_{\mathbf{x}}\) at \(t\in\intv{0}{6}\) form the augmented word
\(+\sigma(C)={+}{+}{-}{-}{+}{+}{-}\), whose three sign changes, one per root of
\(P_{\mathbf{x}}\), give the maximal \(d+1=4\) runs.  The corresponding binary word
is \(\wrd=\textcolor{wblue}{\ttb}\textcolor{worange}{\tta}\textcolor{worange}{\tta}\textcolor{wblue}{\ttb}\textcolor{wblue}{\ttb}\textcolor{worange}{\tta}\); see \cref{fig:sign-word-runs}.

\begin{center}
\definecolor{ink}{HTML}{28323D}
\definecolor{inkS}{HTML}{707C8A}
\definecolor{bluL}{HTML}{2D6CA2}
\definecolor{bluD}{HTML}{14315E}
\definecolor{bluF}{HTML}{D9E6F2}
\definecolor{corL}{HTML}{E26B43}
\definecolor{corF}{HTML}{FBE2D4}
\begin{tikzpicture}[x=1.3cm, y=0.9cm]
\fill[bluF] plot[smooth] coordinates{(0,1.1)(0.5,0.92)(1,0.65)(1.3,0.38)(1.5,0)} -- (0,0) -- cycle;
\fill[corF] plot[smooth] coordinates{(1.5,0)(1.8,-0.42)(2,-0.55)(2.5,-0.73)(3,-0.55)(3.2,-0.42)(3.5,0)} -- cycle;
\fill[bluF] plot[smooth] coordinates{(3.5,0)(3.8,0.42)(4,0.55)(4.5,0.73)(5,0.55)(5.2,0.42)(5.5,0)} -- cycle;
\fill[corF] plot[smooth] coordinates{(5.5,0)(5.75,-0.4)(6,-0.7)} -- (6,0) -- cycle;
\draw[ink, line width=0.8pt, -{Latex[length=2.2mm]}] (-0.4,0) -- (6.8,0)
   node[right=1pt, font=\small, text=ink] {$t$};
\foreach \i in {0,...,6} \draw[inkS, line width=0.5pt] (\i,0.07)--(\i,-0.07);
\draw[bluL, line width=1.5pt] plot[smooth] coordinates{(0,1.1)(0.5,0.92)(1,0.65)(1.3,0.38)(1.5,0)};
\draw[corL, line width=1.5pt] plot[smooth] coordinates{(1.5,0)(1.8,-0.42)(2,-0.55)(2.5,-0.73)(3,-0.55)(3.2,-0.42)(3.5,0)};
\draw[bluL, line width=1.5pt] plot[smooth] coordinates{(3.5,0)(3.8,0.42)(4,0.55)(4.5,0.73)(5,0.55)(5.2,0.42)(5.5,0)};
\draw[corL, line width=1.5pt] plot[smooth] coordinates{(5.5,0)(5.75,-0.4)(6,-0.7)};
\foreach \i/\y/\c in {0/1.1/bluL,1/0.65/bluL,2/-0.55/corL,3/-0.55/corL,4/0.55/bluL,5/0.55/bluL,6/-0.7/corL}{
   \fill[white] (\i,\y) circle (2.4pt); \fill[\c] (\i,\y) circle (1.7pt);}
\foreach \r in {1.5,3.5,5.5}{\fill[white] (\r,0) circle (2.9pt);
   \draw[ink, line width=0.9pt] (\r,0) circle (2.7pt);}
\node[font=\small, text=ink] at (4.35,1.06) {$P_{\mathbf{x}}(t)$};
\def\rdys{-1.6}
\foreach \x in {1.5,3.5,5.5} \draw[inkS, line width=0.5pt] (\x,\rdys+0.38)--(\x,\rdys-0.38);
\foreach \i/\s/\fc/\dc in {0/+/bluF/bluL,1/+/bluF/bluL,2/-/corF/corL,3/-/corF/corL,4/+/bluF/bluL,5/+/bluF/bluL,6/-/corF/corL}{
   \node[rounded corners=2.5pt, minimum size=6mm, inner sep=0, draw=\dc, fill=\fc,
         line width=0.8pt, font=\small\bfseries, text=\dc] at (\i,\rdys) {$\s$};
   \node[font=\scriptsize, text=bluD] at (\i,\rdys-0.55) {\i};}
\node[rounded corners=3pt, minimum size=7.2mm, inner sep=0, draw=ink, dashed,
      line width=0.6pt] at (0,\rdys) {};
\def\rdyb{-2.6}
\foreach \i/\b/\c in {1/\ttb/bluL,2/\tta/corL,3/\tta/corL,4/\ttb/bluL,5/\ttb/bluL,6/\tta/corL}
   \node[font=\small\bfseries, text=\c] at (\i,\rdyb) {$\b$};
\end{tikzpicture}
\captionof{figure}{Sign word of a chamber from a degree-\(3\) polynomial
\(P_{\mathbf{x}}\), with at most \(4\) runs.}
\label{fig:sign-word-runs}
\end{center}
\end{example}

\Cref{thm:run-bijection} makes the count of \cref{thm:SS} explicit for
$\mathcal{H}_{n}^{(d)}$. A word $\mathbf{u} \in \{+,-\}^{n}$ whose augmented
word $+\mathbf{u}$ has exactly $r + 1$ runs is determined by the $r$ positions
in $\intv{1}{n}$ at which consecutive letters of $+\mathbf{u}$ disagree, so there
are $\binom{n}{r}$ such words, and summing over $r \in \intv{0}{d}$ gives
\[
    \#\operatorname{Ch}\bigl(\mathcal{H}_{n}^{(d)}\bigr)
    = \sum_{r=0}^{d} \binom{n}{r}.
\]

\begin{remark}\label{rem:oriented-matroid}
The Vandermonde points \((1,i,i^{2},\ldots,i^{d})\), for \(i\in\intv{0}{n}\), realise
the alternating oriented matroid of rank \(\min(d+1,n+1)\) on \(n+1\) elements.
\Cref{thm:run-bijection} is the corresponding sign-variation criterion in word
form: the chamber sign words of \(\mathcal{H}_{n}^{(d)}\) are the restrictions
to the coordinates \(1,\ldots,n\) of those topes whose \(0\)-coordinate is
positive.
\end{remark}

\section{Tail refinements and the case \texorpdfstring{\(d\ge 3\)}{d >= 3}}\label{sec:higher-tail}

For \(d\geq 3\) we show that the subsets of \(\intv{1}{n}\) of size at most \(d\), which by
\cref{sec:polyarr} index the chambers of \(\mathcal{H}_n^{(d)}\), also index a natural
quotient of \(\{\tta,\ttb\}^{n}\).  Write \(s=d-3\).  We define an equivalence
relation \(\approx_s\), called \emph{\(s\)-quasi-binomial equivalence}, which
refines \(\sim_2\) by additionally recording the
last \(s\) parts of the Young diagram
\(\lambda(\wrd)\), and construct an explicit chamber-to-class bijection
\[
    \operatorname{Ch}(\mathcal{H}_n^{(d)})
    \longrightarrow
    \{\tta,\ttb\}^{n}/\approx_s.
\]
The cases \(d=1\) and \(d=2\) are treated separately in \cref{sec:lower-dim}.

\subsection{Young diagrams, tails, and residual area}
\label{subsec:young}

For binary words of fixed length \(n\), the pair
\(\bigl(|\wrd|_{\tta},\binom{\wrd}{\tta\ttb}\bigr)\)
of~\eqref{eq:binary-complete-invariant} determines all binomial
coefficients of length at most \(2\): the letter counts give
\(\binom{\wrd}{\tta\tta}=\binom{|\wrd|_{\tta}}{2}\) and
\(\binom{\wrd}{\ttb\ttb}=\binom{|\wrd|_{\ttb}}{2}\), while
\(\binom{\wrd}{\tta\ttb}+\binom{\wrd}{\ttb\tta}=|\wrd|_{\tta}\,|\wrd|_{\ttb}\)
fixes \(\binom{\wrd}{\ttb\tta}\).  It is therefore a complete invariant for
\(\sim_2\) on \(\{\tta,\ttb\}^{n}\).\label{subsec:binary-reduction}

\begin{proposition}\label{prop:complete-invariant}
The map
\(\wrd\mapsto\bigl(|\wrd|_{\tta},\binom{\wrd}{\tta\ttb}\bigr)\) induces a
bijection between \(\{\tta,\ttb\}^{n}/{\sim_2}\) and the pairs \((j,\kappa)\)
with \(0\le j\le n\) and \(0\le\kappa\le j(n-j)\).
\end{proposition}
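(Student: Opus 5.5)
The proof splits into three parts: well-definedness and injectivity, which come from the complete-invariant discussion just before the statement; membership of the image in the target set; and surjectivity, via an explicit word for each pair.

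\emph{Well-definedness and injectivity.} By \cref{def:k-binomial} and the binary reduction preceding the statement, $\wrd\sim_2\wu$ holds iff $|\wrd|_{\tta}=|\wu|_{\tta}$ and $\binom{\wrd}{\tta\ttb}=\binom{\wu}{\tta\ttb}$. For words of length $n$, the letter counts determine $\binom{\wrd}{\tta\tta}$ and $\binom{\wrd}{\ttb\ttb}$. The identity $\binom{\wrd}{\tta\ttb}+\binom{\wrd}{\ttb\tta}=|\wrd|_{\tta}|\wrd|_{\ttb}$ then fixes $\binom{\wrd}{\ttb\tta}$. Hence the map is constant on classes, and distinct classes have distinct images.

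\emph{Image inside the target.} Put $j=|\wrd|_{\tta}\in\intv{0}{n}$. Use the Young diagram $\lambda(\wrd)=(r_1,\ldots,r_j)$, with $n-j\ge r_1\ge\cdots\ge r_j\ge0$ and $|\lambda(\wrd)|=\binom{\wrd}{\tta\ttb}$. This gives $0\le\kappa\le j(n-j)$.

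\emph{Surjectivity.} Given $(j,\kappa)$ with $0\le\kappa\le j(n-j)$, write $\kappa=q(n-j)+t$ with $0\le t<n-j$ (if $n-j=0$ then $\kappa=0$ and $\tta^{j}$ works). Take the partition $\lambda=((n-j)^q,t,0,\ldots,0)$ with $j$ parts; this is valid since $q\le j$, and $q=j$ forces $t=0$. Realise it by the word $\tta^{q}\ttb^{\,n-j-t}\tta\ttb^{t}\tta^{j-q-1}$ when $q<j$, or by $\tta^{j}\ttb^{n-j}$ when $q=j$. In general, the map $\wrd\mapsto\lambda(\wrd)$ is a bijection onto partitions in a $j\times(n-j)$ box: from $\lambda$ one reads off $i_k=r_k-r_{k+1}$ and $i_0=n-j-r_1$. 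That general bijection already gives surjectivity.

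None of these steps is a real obstacle. The only care needed is with the degenerate cases $j=0$ and $j=n$, where $\kappa=0$ is forced and the words $\ttb^n$ and $\tta^n$ realise it.
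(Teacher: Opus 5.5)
Your proof is correct. The injectivity step matches the paper's: both invoke the preceding reduction of every length-\(\le 2\) coefficient to \(|\wrd|_{\tta}\) and \(\binom{\wrd}{\tta\ttb}\). Your surjectivity argument takes a different route.

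The paper uses a discrete intermediate-value argument. The word \(\ttb^{\,n-j}\tta^{j}\) has \(\binom{\wrd}{\tta\ttb}=0\), and \(\tta^{j}\ttb^{\,n-j}\) has \(\binom{\wrd}{\tta\ttb}=j(n-j)\). Replacing an adjacent factor \(\ttb\tta\) by \(\tta\ttb\) raises \(\binom{\wrd}{\tta\ttb}\) by exactly \(1\). So every intermediate value is reached along a chain of such swaps.

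You instead write \(\kappa=q(n-j)+t\) and exhibit the explicit witness \(\tta^{q}\ttb^{\,n-j-t}\tta\ttb^{t}\tta^{j-q-1}\). Its Young diagram is \(((n-j)^{q},t,0,\ldots,0)\), and you check that it has the right letter count and area.

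The paper's argument is shorter and needs no partition language. Yours produces a canonical representative for each pair \((j,\kappa)\). It also makes explicit that the image lies in \(\{(j,\kappa):0\le\kappa\le j(n-j)\}\), which the paper leaves implicit; that containment follows from \(\binom{\wrd}{\tta\ttb}\le|\wrd|_{\tta}|\wrd|_{\ttb}\).

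Your appeal to the \(\lambda\leftrightarrow W\) bijection is not circular, since that bijection does not depend on the proposition. Your explicit word already suffices on its own, so the general bijection is not needed.
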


\begin{proof}
The map is constant on \(\sim_2\)-classes and injective there by the preceding discussion; for
surjectivity, fix \(j\) with \(0\le j\le n\): the word \(\ttb^{\,n-j}\tta^j\)
has \(\binom{\wrd}{\tta\ttb}=0\) and \(\tta^j\ttb^{\,n-j}\) has
\(\binom{\wrd}{\tta\ttb}=j(n-j)\), while moving one \(\tta\) one step to the
left across one \(\ttb\) raises \(\binom{\wrd}{\tta\ttb}\) by \(1\) and fixes
\(|\wrd|_{\tta}\); hence every \(\kappa\) with \(0\le\kappa\le j(n-j)\) occurs.
\end{proof}

Recall from \cref{subsec:main-relations} the Young diagram
\(\lambda(\wrd)=(r_{1},\ldots,r_{|\wrd|_{\tta}})\), where \(r_{i}\) is the number of
\(\ttb\)'s to the right of the \(i^{\text{th}}\) \(\tta\) in \(\wrd\), with
\(|\lambda(\wrd)|=\binom{\wrd}{\tta\ttb}\).  It is a partition in the
\(|\wrd|_{\tta}\times |\wrd|_{\ttb}\) rectangle; its height--area pair
\(\bigl(|\wrd|_{\tta},|\lambda(\wrd)|\bigr)\) is the complete
\(\sim_{2}\)-invariant of \cref{prop:complete-invariant}.

Conversely, given \(0\le j\le n\) and a partition \(\lambda=(r_{1},\ldots,r_{j})\)
in the \(j\times (n-j)\) rectangle, define
\[
    W(\lambda):=\ttb^{\,(n-j)-r_{1}}\tta\,\ttb^{\,r_{1}-r_{2}}\tta\cdots
    \ttb^{\,r_{j-1}-r_{j}}\tta\,\ttb^{\,r_{j}},
\]
with \(W(\varnothing):=\ttb^{\,n}\).
\begin{example}
For \(\wrd=\ttb\ttb\tta\ttb\ttb\tta\ttb\tta\),
\(\lambda(\wrd)=(3,1,0)\) and \(|\lambda(\wrd)|=4\).
Conversely,
\[
    W(3,1,0)=\ttb^{\,5-3}\tta\,\ttb^{\,3-1}\tta\,\ttb^{\,1-0}\tta\,\ttb^{\,0}=\wrd.
\]
\end{example}
Since \(\lambda(W(\lambda))=\lambda\) and \(W(\lambda(\wrd))=\wrd\), the map
\(\lambda\) is a bijection with inverse \(W\).

Recall from \cref{def:s-tail} that \(\ell=\min(s,|\wrd|_{\tta})\), and that
\(\theta\) and \(|\mathcal{T}_s(\wrd)|\) denote the last entry and the sum of the
\(s\)-tail \(\mathcal{T}_s(\wrd)\), respectively.

\begin{definition}
\label{def:s-residual-area}
Let \(\wrd\in\{\tta,\ttb\}^n\).  The \(s\)-\emph{residual area} of
\(\wrd\) is defined as
\[
R_s(\wrd):=\binom{\wrd}{\tta\ttb}-(|\wrd|_{\tta}-\ell)\theta-|\mathcal{T}_s(\wrd)|.
\]
\end{definition}

\begin{lemma}\label{lem:residual-area-bounds}
Let \(\wrd\in\{\tta,\ttb\}^n\).  Then we have
\[
    0\le R_s(\wrd)\le (|\wrd|_\tta-\ell)(|\wrd|_\ttb-\theta).
\]
\end{lemma}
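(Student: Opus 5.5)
We rewrite \(R_s(\wrd)\) directly in terms of the parts of the Young diagram \(\lambda(\wrd)=(r_1,\ldots,r_j)\), where \(j:=|\wrd|_{\tta}\). Since \(\binom{\wrd}{\tta\ttb}=|\lambda(\wrd)|=\sum_{i=1}^{j}r_i\), and the \(s\)-tail consists of the last \(\ell\) parts \(r_{j-\ell+1},\ldots,r_j\) with \(\theta=r_{j-\ell+1}\), the tail sum cancels. This gives
\[
R_s(\wrd)=\sum_{i=1}^{j-\ell}r_i-(j-\ell)\theta=\sum_{i=1}^{j-\ell}\bigl(r_i-\theta\bigr).
\]

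The degenerate case \(\ell=0\) must be handled first. Then \(\theta=0\) and the tail is empty, so \(R_s(\wrd)=\sum_{i=1}^{j}r_i\). Each \(r_i\) lies in \(\intv{0}{|\wrd|_{\ttb}}\), so \(0\le R_s(\wrd)\le j\,|\wrd|_{\ttb}\), which is the claim with \(\ell=0\) and \(\theta=0\). This also covers \(j=0\), where every term vanishes.

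Assume now \(\ell\ge 1\). For every \(i\le j-\ell\) we have \(i<j-\ell+1\), and \(\lambda(\wrd)\) is weakly decreasing, so \(r_i\ge r_{j-\ell+1}=\theta\). Each summand \(r_i-\theta\) is therefore non-negative, and \(R_s(\wrd)\ge 0\). For the upper bound, \(r_i\) counts \(\ttb\)'s to the right of an \(\tta\), so \(r_i\le|\wrd|_{\ttb}\). Each summand is then at most \(|\wrd|_{\ttb}-\theta\), and summing over the \(j-\ell\) indices gives \(R_s(\wrd)\le(j-\ell)(|\wrd|_{\ttb}-\theta)\).

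No step is a real obstacle. The only points needing care are the bookkeeping of the indices (the tail is listed in reverse order, so its last entry \(\theta\) is the largest tail part \(r_{j-\ell+1}\)) and separating out the \(\ell=0\) convention. The bounds are attained at the two extreme diagrams \(r_i=\theta\) and \(r_i=|\wrd|_{\ttb}\) for all \(i\le j-\ell\), which we expect the later counting to use as a bounded-partition shape.
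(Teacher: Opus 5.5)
Your proof is correct and is essentially the paper's argument made explicit. The paper notes that \(R_s(\wrd)\) counts the cells of \(\lambda(\wrd)\) above height \(\theta\) in the \(|\wrd|_{\tta}-\ell\) unrecorded rows, which fit in a \((|\wrd|_{\tta}-\ell)\times(|\wrd|_{\ttb}-\theta)\) rectangle; your identity \(R_s(\wrd)=\sum_{i=1}^{j-\ell}(r_i-\theta)\) with \(0\le r_i-\theta\le|\wrd|_{\ttb}-\theta\) is that same cell count written algebraically, and your separate \(\ell=0\) case is harmless but not needed, since then \(\theta=0\le r_i\).
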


\begin{proof}
The formula gives \(R_s(\wrd)\) as the number of cells of \(\lambda(\wrd)\) above height
\(\theta\) in the \(|\wrd|_\tta-\ell\) unrecorded rows, which fit inside a
\((|\wrd|_\tta-\ell)\times(|\wrd|_\ttb-\theta)\) rectangle; see \cref{fig:residual-area}.
\end{proof}

\begin{example}
\label{ex:residual-area-bound}
Let \(\wrd=\tta\ttb\tta\ttb\ttb\tta\ttb\tta\ttb\) such that \(s=2\), one has
\(\lambda(\wrd)=(5,4,2,1)\) and \(\ell=\theta=2\); \(R_2(\wrd)=5\)
fits inside the \((|\wrd|_\tta-\ell)\times(|\wrd|_\ttb-\theta)=2\times 3\) rectangle of
\cref{lem:residual-area-bounds}; see \cref{fig:residual-area}.

\begin{center}
\definecolor{rdInk}{HTML}{3E2E38}    
\definecolor{rdGold}{HTML}{E36A48}   
\definecolor{rdGoldF}{HTML}{F6BC8C}  
\definecolor{rdGrey}{HTML}{F4EDE1}   
\definecolor{rdBlue}{HTML}{8E4A78}   
\definecolor{rdBlueF}{HTML}{CBA3C2}  
\definecolor{rdFrame}{HTML}{CFC4CC}  
\begin{tikzpicture}[x=0.74cm, y=0.74cm,
    ce/.style={draw=rdInk, line width=0.5pt},
    lab/.style={font=\small, text=rdInk},
    sub/.style={font=\footnotesize, text=rdInk}]
\fill[rdGrey]  (0,-1) rectangle (2,1);
\fill[rdGoldF] (2,0)  rectangle (5,1);
\fill[rdGoldF] (2,-1) rectangle (4,0);
\fill[rdBlueF] (0,-2) rectangle (2,-1);
\fill[rdBlueF] (0,-3) rectangle (1,-2);
\draw[draw=rdFrame, line width=0.5pt, dash pattern=on 1.5pt off 1.8pt]
  (0,1) rectangle (5,-3);
\foreach \j in {1,...,5} \draw[ce] (\j-1,0)  rectangle (\j,1);
\foreach \j in {1,...,4} \draw[ce] (\j-1,-1) rectangle (\j,0);
\foreach \j in {1,2}     \draw[ce] (\j-1,-2) rectangle (\j,-1);
\draw[ce] (0,-3) rectangle (1,-2);
\draw[draw=rdGold, line width=1.1pt, dash pattern=on 3pt off 2pt]
  (2,1) rectangle (5,-1);
\draw[decorate, decoration={brace, amplitude=4pt}, rdInk, line width=0.6pt]
  (0,1.24) -- (2,1.24) node[midway, above=3pt, lab] {$\theta$};
\draw[decorate, decoration={brace, amplitude=4pt, mirror}, rdInk, line width=0.6pt]
  (-0.26,1) -- (-0.26,-1)
  node[midway, left=4pt, sub, align=right] {unrecorded\\rows};
\draw[decorate, decoration={brace, amplitude=4pt, mirror}, rdBlue, line width=0.6pt]
  (-0.26,-1) -- (-0.26,-3)
  node[midway, left=4pt, font=\footnotesize, text=rdBlue, align=right]
  {$s$-tail\\$\mathcal{T}_s(\wrd)$};
\node[font=\small\bfseries, text=rdGold, anchor=west] at (5.4,0.45) {$R_s(\wrd)$};
\draw[rdGold, line width=0.7pt, -{Latex[length=1.8mm]}]
  (5.35,0.47) to[out=180, in=22] (4.05,0.55);
\end{tikzpicture}
\captionof{figure}{Decomposition of \(\lambda(\wrd)\) into the \(s\)-tail, the
forced \(\theta\)-columns, and the residual area \(R_s(\wrd)\).}
\label{fig:residual-area}
\end{center}
\end{example}

Recall from \cref{def:approx-s} that \(\wrd\approx_s\wrd'\) means
\(\wrd\sim_2\wrd'\) together with \(\mathcal{T}_s(\wrd)=\mathcal{T}_s(\wrd')\),
the class being written \([\wrd]_s\).

\begin{proposition}
\label{prop:equiv-s-chain}
The \(\approx_s\) form a decreasing chain \(\approx_{s+1}\subseteq\approx_s\),
with \(\approx_0=\sim_2\) and \(\approx_n\) the equality relation.
\end{proposition}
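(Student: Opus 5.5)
The three claims are handled one at a time, directly from \cref{def:approx-s} and \cref{def:s-tail}, with the bijection \(W\) between binary words and Young diagrams doing the work for the last claim.

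\emph{The chain.} Suppose \(\wu\approx_{s+1}\wrd\). Then \(\wu\sim_2\wrd\), so \(|\wu|_{\tta}=|\wrd|_{\tta}=:j\). Hence \(\ell_{s}=\min(s,j)\) and \(\ell_{s+1}=\min(s+1,j)\) agree for the two words. The \(s\)-tail \(\mathcal{T}_s\) is the prefix of length \(\ell_s\le\ell_{s+1}\) of the sequence \(\mathcal{T}_{s+1}=(r_j,r_{j-1},\ldots,r_{j-\ell_{s+1}+1})\). So equal \((s+1)\)-tails give equal \(s\)-tails, and \(\wu\approx_s\wrd\).

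\emph{The case \(s=0\).} Here \(\ell=0\), so \(\mathcal{T}_0\) is empty for every word and the tail condition is vacuous. Thus \(\approx_0=\sim_2\).

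\emph{The case \(s=n\).} Equality clearly implies \(\approx_n\). For the converse, let \(\wu\approx_n\wrd\) in \(\{\tta,\ttb\}^n\) and put \(j=|\wu|_{\tta}=|\wrd|_{\tta}\le n\). Then \(\ell=\min(n,j)=j\), so \(\mathcal{T}_n\) is the whole diagram \(\lambda\) read in reverse order. Equal \(n\)-tails therefore give \(\lambda(\wu)=\lambda(\wrd)\), since both are partitions with the same number \(j\) of parts. As \(\lambda\) is a bijection with inverse \(W\) (for fixed \(n\), once \(j\) is known), we get \(\wu=W(\lambda(\wu))=W(\lambda(\wrd))=\wrd\).

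The only mildly delicate point is the bookkeeping that \(\ell\) depends on \(|\wrd|_{\tta}\). This is why \(\sim_2\), and in particular abelian equivalence, must be used first to fix \(j\) before tails are compared. Zero parts are recorded, so the tail length is exactly \(\min(s,j)\) and no ambiguity arises.
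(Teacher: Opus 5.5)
Your proposal is correct and follows the same route as the paper: \(\mathcal{T}_s\) is a prefix of \(\mathcal{T}_{s+1}\), \(\mathcal{T}_0\) is empty, and for \(s=n\) the tail recovers the whole diagram, which determines the word via \(W\). Your bookkeeping is slightly more careful than the paper's. The paper writes \(\mathcal{T}_n(\wrd)=\lambda(\wrd)\) when the tail is really \(\lambda(\wrd)\) read in reverse, and it leaves implicit that \(\ell\) depends on \(|\wrd|_{\tta}\).
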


\begin{proof}
Since \(\mathcal{T}_s(\wrd)\) is a prefix of \(\mathcal{T}_{s+1}(\wrd)\) and the
\(\sim_2\) condition is independent of \(s\), we have
\(\approx_{s+1}\subseteq\approx_s\).  For \(s=0\), \(\mathcal{T}_0(\wrd)=\varnothing\),
so \(\approx_0=\sim_2\).  For \(s=n\), \(\mathcal{T}_n(\wrd)=\lambda(\wrd)\); since
\(n\) and \(\lambda(\wrd)\) determine \(\wrd\) (\cref{subsec:young}), \(\approx_n\)
is equality.
\end{proof}

\subsection{The chamber-to-class map}
\label{subsec:explicit-chamber-to-class-map}

Let \(d\geq 3\) and put \(s=d-3\).  For a chamber
\(C\in\operatorname{Ch}(\mathcal{H}_n^{(d)})\) with sign word
\(\sigma(C)=\sigma_1\cdots\sigma_n\in\{+,-\}^n\), the \emph{address} of \(C\) is
the set of sign-change positions of the augmented word \(+\sigma(C)\):
\[
    \Delta(C):=\{\,1:\sigma_1=-\,\}\cup\{\,i\in\intv{2}{n}:\sigma_{i-1}\neq\sigma_i\,\}.
\]

\medskip\noindent\textbf{Notation.}\enspace
For a set \(X\), we write \(\binom{X}{k}:=\{\Delta\subseteq X:\#\Delta=k\}\) and,
following Gerbner and Patk\'{o}s~\cite{GerbnerPatkos2018},
\(\binom{X}{\leq r}:=\bigcup_{k=0}^{r}\binom{X}{k}\).

Then \(\#\Delta(C)+1\) is the number of runs of \(+\sigma(C)\), so
\(\Delta(C)\in\binom{\intv{1}{n}}{\leq d}\) by \cref{thm:run-bijection}; conversely,
every \(\Delta\subseteq\intv{1}{n}\) is the address of a unique sign word, since the
augmented word begins with \(+\).

Given \(\Delta=\{c_1<\cdots<c_r\}\subseteq\intv{1}{n}\) with \(r\leq s+3\), its
smallest \(\min(r,s)\) elements give the \(s\)-tail entries \(c_i-i\) and the rest
encode the residual area.  We define \(\lambda_\Delta\) as follows.

\begin{itemize}
\item \textbf{Case~I: \(r\leq s\)} (all of \(\Delta\) is tail).  Then
\[
    \lambda_\Delta:=(c_r-r,\,c_{r-1}-(r-1),\,\ldots,\,c_1-1).
\]

\item \textbf{Case~II: \(r=s+1\)} (zero residual area).  For \(s\in\N_{>0}\), the
largest tail entry \(c_s-s\) is repeated above the remaining tail entries:
\[
    \lambda_\Delta:=\bigl((c_s-s)^{\,n-c_{s+1}+2},\,c_{s-1}-(s-1),\,\ldots,\,c_1-1\bigr),
\]
while for \(s=0\),
\[
    \lambda_\Delta:=(0^{\,n-c_1+1}).
\]

\item \textbf{Case~III: \(r\in\{s+2,s+3\}\)} (positive residual area).  Adjoin
\(n+1\) to \(\Delta\) when \(r=s+2\), and write the resulting
set, which is \(\Delta\) itself when \(r=s+3\), as
\(\widetilde{\Delta}=\{c_1<\cdots<c_{s+3}\}\).  For \(s\in\N_{>0}\),
\[
    \lambda_\Delta:=\bigl((c_{s+2}-s-1)^{\,c_{s+3}-c_{s+2}-1},\,c_{s+1}-s,\,(c_s-s)^{\,n-c_{s+3}+2},\,c_{s-1}-(s-1),\,\ldots,\,c_1-1\bigr),
\]
while for \(s=0\),
\[
    \lambda_\Delta:=\bigl((c_2-1)^{\,c_3-c_2-1},\,c_1,\,0^{\,n-c_3+1}\bigr).
\]
The ordering \(c_1<\cdots<c_{s+3}\leq n+1\) ensures that
\(\lambda_\Delta\) is a partition.
\end{itemize}

\begin{example}
Let \(s=1\) and \(n=5\), and let \(C\) have sign word
\(\sigma(C)=\mathord{+}\mathord{-}\mathord{-}\mathord{+}\mathord{-}\); its
augmented word changes sign at positions \(2,4,5\), so
\(\Delta(C)=\{2,4,5\}\).  Here \(r=3=s+2\), so Case~III applies with
\(\widetilde{\Delta}=\{2,4,5,6\}\), giving
\(\lambda_\Delta=(3^{0},\,3,\,1^{1})=(3,1)\) and
\(W(\lambda_\Delta)=\tta\ttb\ttb\tta\ttb\).
\end{example}

The construction of Cases~I--III sends an address \(\Delta\) to a partition
\(\lambda_\Delta\); composing it with the partition--word bijection \(W\) of
\cref{subsec:young} and taking the \(\approx_s\)-class gives the map
\[
    \begin{aligned}
        \Lambda_{s,n} \colon \binom{\intv{1}{n}}{\leq s+3}
            &\longrightarrow \{\tta,\ttb\}^{n} / {\approx_s}, \\
        \Delta &\longmapsto \bigl[\,W(\lambda_\Delta)\,\bigr]_s.
    \end{aligned}
\]
In words, \(\Lambda_{s,n}\) reads the partition \(\lambda_\Delta\) off \(\Delta\),
converts it into the word \(W(\lambda_\Delta)\), and returns that word's
\(\approx_s\)-class.  The cardinality bound \(\#\Delta\leq s+3\) is exactly the
run bound of \cref{thm:run-bijection} at \(d=s+3\), so the domain of
\(\Lambda_{s,n}\) is the set of chamber addresses.

\begin{lemma}\label{lem:Lambda-bijective}
The map \(\Lambda_{s,n}\) is a bijection.
\end{lemma}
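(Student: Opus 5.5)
The plan is to prove \cref{lem:Lambda-bijective} by writing down the inverse map explicitly. The inverse reads a complete invariant of an \(\approx_s\)-class and decodes an address from it. For \(\wrd\in\{\tta,\ttb\}^n\), \cref{prop:complete-invariant} and \cref{def:approx-s} show that \([\wrd]_s\) is determined by the triple \(\bigl(j,\mathcal{T}_s(\wrd),\binom{\wrd}{\tta\ttb}\bigr)\) with \(j=|\wrd|_\tta\). Since \(\ell\) and \(\theta\) are read off the tail, the class is equally determined by \(\bigl(j,\mathcal{T}_s(\wrd),R_s(\wrd)\bigr)\). So I will compute this triple for \(W(\lambda_\Delta)\) in each of Cases~I--III, and then show two things: the triple determines \(\Delta\) (injectivity), and every admissible triple arises (surjectivity).

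\textbf{Step 1: compute the invariants.} In Case~I, \(\lambda_\Delta\) has \(j=r\le s\) parts. The tail is all of \(\lambda_\Delta\), namely \((c_1-1,\ldots,c_r-r)\), so the class is a singleton. In Case~II, \(j=n-c_{s+1}+s+1\ge s+1\), the tail is \((c_1-1,\ldots,c_s-s)\), and \(R_s=0\); for \(s=0\) this reads \(j=n-c_1+1\) with empty tail. In Case~III, counting parts gives \(j=n-c_{s+2}+s+1\) and the tail is again \((c_i-i)_{i\le s}\), so \(\theta=c_s-s\) (read as \(\theta=0\) when \(s=0\)). Subtracting the forced \(\theta\)-columns gives
\[
R_s=(c_{s+2}-c_s-1)(c_{s+3}-c_{s+2}-1)+(c_{s+1}-c_s),
\]
with the convention \(c_0=0\) when \(s=0\). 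Put \(A:=c_{s+2}-c_s-1\). Then \(1\le c_{s+1}-c_s\le A\), so \(R_s\ge 1\). Moreover \(j-\ell=n-c_{s+2}+1\) and \(|\wrd|_\ttb-\theta=(n-j)-\theta=A\). Hence the rectangle of \cref{lem:residual-area-bounds} is exactly \((n-c_{s+2}+1)\times A\).

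\textbf{Step 2: decode.} Take a class with invariants \((j,T,R)\).
\begin{itemize}
\item If \(j\le s\), we are in Case~I, and \(c_i=T_i+i\) recovers \(\Delta\).
\item If \(j>s\), then \(T\) recovers \(c_1,\ldots,c_s\), and the cases split by \(R\).
\begin{itemize}
\item If \(R=0\), we are in Case~II, and \(c_{s+1}=n+s+1-j\).
\item If \(R>0\), we are in Case~III, and \(c_{s+2}=n+s+1-j\), which fixes \(A\). Division with remainder in \(\intv{1}{A}\) gives unique integers \(k\ge 0\) and \(b\in\intv{1}{A}\) with \(R=Ak+b\). Then \(c_{s+1}=c_s+b\) and \(c_{s+3}=c_{s+2}+1+k\). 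When \(c_{s+3}=n+1\), the adjoined element is removed and \(r=s+2\).
\end{itemize}
\end{itemize}
The three cases are separated by the invariants (\(j\le s\); \(j>s,\ R=0\); \(j>s,\ R>0\)), so the decodings are compatible and \(\Lambda_{s,n}\) is injective.

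\textbf{Step 3: surjectivity, the expected main obstacle.} Here I must check that the decoded set is a genuine address in \(\binom{\intv{1}{n}}{\le s+3}\). This requires strict increase \(c_1<\cdots<c_{s+3}\le n+1\). It uses that \(T\) is the tail of a partition in the \(j\times(n-j)\) rectangle, so that \(c_i=T_i+i\) is strictly increasing and \(c_s<c_{s+2}\) whenever \(j>s\). It also uses \(R\le (j-\ell)A\) from \cref{lem:residual-area-bounds}, which gives \(k\le n-c_{s+2}\). The delicate point is the degenerate situation \(A=0\), where the bound forces \(R=0\), together with the boundary cases \(s=0\) and \(j=\ell\); these I will check by hand. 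A cleaner alternative route to surjectivity is cardinality. Injectivity gives \(\#(\{\tta,\ttb\}^n/\approx_s)\ge\sum_{r\le s+3}\binom nr\). Counting triples \((j,T,R)\) directly against the bounds of \cref{lem:residual-area-bounds} would then close the argument, but the direct decoding above should suffice.
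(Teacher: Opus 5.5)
Your proposal is correct and follows essentially the paper's route: both build an explicit inverse that decodes the complete invariant \((|\wrd|_\tta,\mathcal{T}_s(\wrd),R_s(\wrd))\). Your division \(R=Ak+b\) with \(b\in\intv{1}{A}\) is exactly the paper's decomposition \(R_s(\wrd)=(u-1)(|\wrd|_\ttb-\theta)+(x-\theta)\) under \(u=k+1\), \(x=\theta+b\). Your Step~1 computation of the invariants of \(W(\lambda_\Delta)\) in each case makes explicit the injectivity half (recovering \(\Delta\) from \(\Lambda_{s,n}(\Delta)\)) that the paper leaves implicit, and the inequalities you list for Step~3 are precisely the ones that make the decoded set a valid address.
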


\begin{proof}
We construct the inverse \([\wrd]_s\mapsto\Delta\).  Let \(\wrd\) represent
\([\wrd]_s\) and write \(\lambda(\wrd)=(r_1,\ldots,r_{|\wrd|_\tta})\).
\begin{itemize}
\item \textit{Case 1 (\(|\wrd|_\tta\leq s\)).}
The set
\[
\Delta:=\{r_{|\wrd|_\tta}+1,\,r_{|\wrd|_\tta-1}+2,\,\ldots,\,r_1+|\wrd|_\tta\}
\]
has strictly increasing elements with maximum \(r_1+|\wrd|_\tta\leq n\), so
\(\Delta\in\binom{\intv{1}{n}}{|\wrd|_\tta}\) and \(\lambda_\Delta=\lambda(\wrd)\).

\item \textit{Case 2 (\(|\wrd|_\tta>s\), \(R_s(\wrd)=0\)).}
Here \(r_1=\cdots=r_{|\wrd|_\tta-s}=\theta\), and \(\lambda_\Delta=\lambda(\wrd)\) for
\[
\Delta:=\{r_{|\wrd|_\tta}+1,\,r_{|\wrd|_\tta-1}+2,\,\ldots,\,
r_{|\wrd|_\tta-s+1}+s,\,|\wrd|_\ttb+s+1\}\in\binom{\intv{1}{n}}{s+1}.
\]

\item \textit{Case 3 (\(|\wrd|_\tta>s\), \(R_s(\wrd)>0\)).}
Then \(|\wrd|_\ttb>\theta\), and \cref{lem:residual-area-bounds} gives
\(R_s(\wrd)\leq(|\wrd|_\tta-s)(|\wrd|_\ttb-\theta)\), so \(R_s(\wrd)\) has the unique
expression
\[
R_s(\wrd)=(u-1)(|\wrd|_\ttb-\theta)+(x-\theta),
\qquad 1\leq u\leq |\wrd|_\tta-s,\qquad \theta<x\leq |\wrd|_\ttb.
\]
With
\[
\widetilde{\Delta}:=\{r_{|\wrd|_\tta}+1,\,r_{|\wrd|_\tta-1}+2,\,\ldots,\,
r_{|\wrd|_\tta-s+1}+s,\,x+s,\,|\wrd|_\ttb+s+1,\,|\wrd|_\ttb+u+s+1\},
\]
set
\[
\Delta:=
\begin{cases}
\widetilde{\Delta}, & \text{if } |\wrd|_\ttb+u+s+1\leq n,\\
\widetilde{\Delta}\setminus\{n+1\}, & \text{if } |\wrd|_\ttb+u+s+1=n+1,
\end{cases}
\]
so \(\#\Delta\in\{s+2,s+3\}\).  Then \(\lambda_\Delta\) shares the \(|\wrd|_\tta\),
\(s\)-tail, and residual area of \(\lambda(\wrd)\), so \(W(\lambda_\Delta)\approx_s\wrd\).
\end{itemize}
In each case \(\Delta\) is determined by \(|\wrd|_\tta\), the \(s\)-tail, and
\(R_s(\wrd)\), hence by \([\wrd]_s\) alone, and satisfies
\(\Lambda_{s,n}(\Delta)=[\wrd]_s\); thus \([\wrd]_s\mapsto\Delta\) is the inverse of
\(\Lambda_{s,n}\).
\end{proof}

\subsection{The chamber-to-class theorem for \texorpdfstring{\(d\ge 3\)}{d >= 3}}
\label{subsec:main-theorem-d-geq-3}

Let \(d\geq3\) and put \(s=d-3\).  Composing the address bijection
\(C\mapsto\Delta(C)\) of \cref{thm:run-bijection} with \(\Lambda_{s,n}\) yields the
\emph{chamber-to-class map}
\[
    \begin{aligned}
        \Psi_{d,n} \colon \operatorname{Ch}\bigl(\mathcal{H}_n^{(d)}\bigr)
            &\longrightarrow \{\tta,\ttb\}^{n} / {\approx_s}, \\
        C &\longmapsto [w_C]_s,
    \end{aligned}
\]
where \(w_C:=W(\lambda_{\Delta(C)})\) is the \emph{canonical representative} of
\(C\).

\begin{theorem}
\label{thm:higher-tail-schlaefli-dictionary}
The map \(\Psi_{d,n}\) is a bijection.
\end{theorem}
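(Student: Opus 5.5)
The plan is to write \(\Psi_{d,n}\) as the composite of two maps already shown to be bijections:
\[
    \Psi_{d,n} \;=\; \Lambda_{s,n}\circ\Delta,
    \qquad
    \Delta\colon \operatorname{Ch}\bigl(\mathcal{H}_n^{(d)}\bigr)\to\binom{\intv{1}{n}}{\leq d},\quad C\mapsto\Delta(C).
\]
By definition \(\Psi_{d,n}(C)=[W(\lambda_{\Delta(C)})]_s=\Lambda_{s,n}(\Delta(C))\), so the identity holds pointwise. Since \(d=s+3\), the codomain of \(\Delta\) is exactly the domain of \(\Lambda_{s,n}\). Once the address map is shown to be bijective, the theorem follows from \cref{lem:Lambda-bijective}, because a composite of bijections is a bijection.

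To show the address map is bijective, factor it as \(C\mapsto\sigma(C)\mapsto\Delta(C)\).
\begin{itemize}
\item By \cref{thm:run-bijection}, the first arrow is a bijection onto the set of words \(\mathbf{u}\in\{+,-\}^n\) such that \(+\mathbf{u}\) has at most \(d+1\) runs.
\item The second arrow sends a word to the set of positions where the augmented word \(+\mathbf{u}\) changes sign. Because the augmented word starts with a fixed \(+\), this map \(\{+,-\}^n\to 2^{\intv{1}{n}}\) is a bijection. Its inverse reads \(\sigma_i=+\) exactly when \(\#(\Delta\cap\intv{1}{i})\) is even.
\item The number of runs of \(+\mathbf{u}\) is \(\#\Delta+1\). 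So the condition ``at most \(d+1\) runs'' corresponds exactly to \(\#\Delta\le d\), and the restricted map is a bijection onto \(\binom{\intv{1}{n}}{\leq d}\).
\end{itemize}
This is the observation made just before the definition of \(\Lambda_{s,n}\); I would simply restate it with the parity inverse made explicit.

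No step here should be a real obstacle, since the substantive content sits in \cref{lem:Lambda-bijective}. The point needing care is that \(\Lambda_{s,n}\) is well defined on every \(\Delta\) of size at most \(s+3\). In Case~III one must check that \(\lambda_\Delta\) fits in the \(j\times(n-j)\) rectangle with \(j\) equal to its number of parts, so that \(W(\lambda_\Delta)\) is a genuine word of length \(n\). I would verify this by computing that \(\lambda_\Delta\) has \((c_{s+3}-c_{s+2}-1)+1+(n-c_{s+3}+2)+(s-1)=n-c_{s+2}+s+1\) parts, with largest part \(c_{s+2}-s-1\). These sum to \(n\), so \(W(\lambda_\Delta)\) has length exactly \(n\); Cases~I and~II are checked the same way.

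If \cref{lem:Lambda-bijective} is taken as given, the proof reduces to the composition argument above. A cardinality cross-check is also available: both sides have \(\sum_{r\le d}\binom{n}{r}\) elements, by \cref{thm:SS} and \cref{prop:gen-position}. It would then suffice to show that \(\Lambda_{s,n}\) is surjective, which is what the inverse construction in \cref{lem:Lambda-bijective} provides.
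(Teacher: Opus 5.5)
Your proof is the paper's proof: it also writes \(\Psi_{d,n}=\Lambda_{s,n}\circ\Delta\) and takes bijectivity of the address map from \cref{thm:run-bijection} and of \(\Lambda_{s,n}\) from \cref{lem:Lambda-bijective}; your extra checks (the parity inverse of the address map, and \(\lambda_\Delta\) fitting its \(j\times(n-j)\) rectangle) are correct. One caution on your optional cross-check: \cref{thm:SS} and \cref{prop:gen-position} only count the chambers, and the paper derives the number of \(\approx_s\)-classes from this very theorem, so using that count to reduce to surjectivity is circular unless you count the classes independently.
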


\begin{proof}
By construction \(\Psi_{d,n}=\Lambda_{s,n}\circ\Delta\).  The address map \(\Delta\)
is a bijection by \cref{thm:run-bijection} and \(\Lambda_{s,n}\) by
\cref{lem:Lambda-bijective} (as \(d=s+3\)), hence so is \(\Psi_{d,n}\).
\end{proof}

Thus the words \(w_C\), for \(C\in\operatorname{Ch}(\mathcal{H}_n^{(d)})\),
form a complete set of representatives of \(\{\tta,\ttb\}^n/\!\approx_s\).  Taking
cardinalities gives, for every \(s\in\N\),
\[
\#\bigl(\{\tta,\ttb\}^n/\approx_s\bigr)=\sum_{r=0}^{s+3}\binom{n}{r}.
\]

\begin{corollary}
\label{cor:equiv-s-stabilisation}
Let \(n,s\in\N\).  Then \(\approx_{s+1}\subsetneq\approx_s\) iff \(s\leq n-4\);
equivalently, \(\approx_s\) is equality on \(\{\tta,\ttb\}^n\) iff \(s\geq n-3\).
\end{corollary}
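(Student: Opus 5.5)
We compare class counts. The counting formula just established gives, for every \(s\in\N\),
\[
N_s:=\#\bigl(\{\tta,\ttb\}^n/\approx_s\bigr)=\sum_{r=0}^{s+3}\binom{n}{r}.
\]
By \cref{prop:equiv-s-chain} we have \(\approx_{s+1}\subseteq\approx_s\). A refinement of equivalence relations on a finite set is strict iff the finer relation has strictly more classes. Hence
\[
\approx_{s+1}\subsetneq\approx_s \iff N_{s+1}>N_s \iff \binom{n}{s+4}>0 \iff s+4\le n,
\]
that is, iff \(s\le n-4\).

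For the second formulation, note that equality on \(\{\tta,\ttb\}^n\) has exactly \(2^n\) classes. Moreover, \(\approx_s\) is equality iff \(N_s=2^n\). Since \(\sum_{r=0}^{n}\binom{n}{r}=2^n\), the truncated sum \(N_s=\sum_{r=0}^{s+3}\binom{n}{r}\) equals \(2^n\) iff no term \(\binom{n}{r}\) with \(s+3<r\le n\) is omitted. This happens iff \(s+3\ge n\), i.e. \(s\ge n-3\).

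The two formulations agree. By the chain property, \(\approx_s\) is equality iff every inclusion \(\approx_{t+1}\subseteq\approx_t\) with \(t\ge s\) is non-strict. This matches the first statement, since strictness fails exactly for \(t\ge n-3\).

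There is no real obstacle. The only points to check are the finite-set fact that a strict refinement increases the number of classes, and the edge cases of small \(n\) (for instance \(n\le 3\), where every \(\approx_s\) is already equality). Both follow directly from the binomial sums.
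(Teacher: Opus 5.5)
Your proposal is correct and follows the paper's own proof: both combine the class count \(\sum_{r=0}^{s+3}\binom{n}{r}\) with the inclusion \(\approx_{s+1}\subseteq\approx_s\), so that strictness is equivalent to \(\binom{n}{s+4}>0\), and both identify \(\approx_s\) being equality with the count reaching \(2^n\). You add two justifications the paper leaves implicit: that a refinement on a finite set is strict exactly when the number of classes increases, and both directions of the second formulation.
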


\begin{proof}
By the count above, the numbers of \(\approx_{s+1}\)- and \(\approx_s\)-classes
differ by \(\binom{n}{s+4}\); since \(\approx_{s+1}\subseteq\approx_s\) by
\cref{prop:equiv-s-chain}, the inclusion is strict iff \(\binom{n}{s+4}>0\), that
is \(s\leq n-4\).  For \(s\geq n-3\) the count is \(2^{n}=\#\{\tta,\ttb\}^n\),
so every class is a singleton and \(\approx_s\) is equality.
\end{proof}

\subsection{The cases \texorpdfstring{\(d=3\)}{d = 3} and
\texorpdfstring{\(d=4\)}{d = 4}}
\label{subsec:cases-d3-d4}\label{subsec:cake-d3}

The cases \(d=3\) and \(d=4\) of
\cref{thm:higher-tail-schlaefli-dictionary} recover, respectively,
\(2\)-binomial equivalence (the cake numbers) and \(1\)-quasi-binomial
equivalence.

\medskip\noindent\textbf{\(\mathbf{2}\)-binomial equivalence: \(\mathbf{d=3}\).}\enspace
Here \(s=0\), so \(\approx_0=\sim_2\) by \cref{prop:equiv-s-chain}, and \(\Psi_{3,n}\)
identifies the chambers of \(\mathcal{H}_n^{(3)}\) with the \(\sim_2\)-classes of
\(\{\tta,\ttb\}^n\), counted by the cake numbers
\(1,2,4,8,15,26,\ldots\) (\seqnum{A000125}).
At \(s=0\), the construction of \cref{subsec:explicit-chamber-to-class-map} gives
\[
w_C=
\begin{cases}
\ttb^{n}, & \text{if } \Delta(C)=\varnothing,\\
\ttb^{c-1}\tta^{n-c+1}, & \text{if } \Delta(C)=\{c\},\\
\tta^{c_3-c_2-1}\ttb^{c_2-c_1-1}\tta\ttb^{c_1}\tta^{n-c_3+1}, & \text{if } \#\Delta(C)\in\{2,3\},
\end{cases}
\]
where \(\widetilde{\Delta}=\{c_1<c_2<c_3\}\) is \(\Delta(C)\cup\{n+1\}\) if
\(\#\Delta(C)=2\) and \(\Delta(C)\) if \(\#\Delta(C)=3\).

\begin{example}
Take \(n=6\) and let \(C\) have sign word
\(\sigma(C)=\mathord{+}\mathord{-}\mathord{-}\mathord{+}\mathord{+}\mathord{+}\),
so \(\Delta(C)=\{2,4\}\).  Since \(\#\Delta(C)=2\) we adjoin \(n+1=7\), giving
\(\widetilde{\Delta}=\{2,4,7\}\); hence
\(W(\lambda_{\Delta(C)})=\tta\tta\ttb\tta\ttb\ttb\).
\end{example}

\cref{fig:cake-d3} illustrates this bijection for \(n=3\): \(\Psi_{3,3}\) matches
the eight chambers of \(\mathcal{H}_3^{(3)}\) with the \(\sim_2\)-classes of
\(\{\tta,\ttb\}^3\).

\begin{figure}[!htbp]
\centering
\definecolor{ckbbb}{HTML}{B83563}   
\definecolor{ckbba}{HTML}{2E3F8F}   
\definecolor{ckbab}{HTML}{1F7A7A}   
\definecolor{ckabb}{HTML}{2E7D3A}   
\definecolor{ckbaa}{HTML}{2E6CB8}   
\definecolor{ckaba}{HTML}{9C6B1F}   
\definecolor{ckaab}{HTML}{B83560}   
\definecolor{ckaaa}{HTML}{7E8C2B}   
\definecolor{ckRule}{HTML}{3E2E38}  
\begin{minipage}[c]{0.25\textwidth}
\centering
\small
\renewcommand{\arraystretch}{1.45}
\setlength{\tabcolsep}{3pt}
\begin{tabular}{@{}clccc@{}}
 & \textbf{word} & {\boldmath$\sigma$} & {\boldmath$|\wrd|_{\tta}$} & {\boldmath$\binom{\wrd}{\tta\ttb}$}\\
\noalign{\global\setlength{\arrayrulewidth}{0.9pt}}\cmidrule(lr){2-5}%
\noalign{\global\setlength{\arrayrulewidth}{0.4pt}}
\textcolor{ckbbb}{$\bullet$} & \textcolor{ckbbb}{$\ttb\ttb\ttb$}
  & \textcolor{ckbbb}{$+\,+\,+$} & \textcolor{ckbbb}{$0$} & \textcolor{ckbbb}{$0$}\\
\textcolor{ckbba}{$\bullet$} & \textcolor{ckbba}{$\ttb\ttb\tta$}
  & \textcolor{ckbba}{$-\,+\,+$} & \textcolor{ckbba}{$1$} & \textcolor{ckbba}{$0$}\\
\textcolor{ckbab}{$\bullet$} & \textcolor{ckbab}{$\ttb\tta\ttb$}
  & \textcolor{ckbab}{$+\,-\,+$} & \textcolor{ckbab}{$1$} & \textcolor{ckbab}{$1$}\\
\textcolor{ckabb}{$\bullet$} & \textcolor{ckabb}{$\tta\ttb\ttb$}
  & \textcolor{ckabb}{$+\,+\,-$} & \textcolor{ckabb}{$1$} & \textcolor{ckabb}{$2$}\\
\textcolor{ckbaa}{$\bullet$} & \textcolor{ckbaa}{$\ttb\tta\tta$}
  & \textcolor{ckbaa}{$-\,-\,+$} & \textcolor{ckbaa}{$2$} & \textcolor{ckbaa}{$0$}\\
\textcolor{ckaba}{$\bullet$} & \textcolor{ckaba}{$\tta\ttb\tta$}
  & \textcolor{ckaba}{$-\,+\,-$} & \textcolor{ckaba}{$2$} & \textcolor{ckaba}{$1$}\\
\textcolor{ckaab}{$\bullet$} & \textcolor{ckaab}{$\tta\tta\ttb$}
  & \textcolor{ckaab}{$+\,-\,-$} & \textcolor{ckaab}{$2$} & \textcolor{ckaab}{$2$}\\
\textcolor{ckaaa}{$\bullet$} & \textcolor{ckaaa}{$\tta\tta\tta$}
  & \textcolor{ckaaa}{$-\,-\,-$} & \textcolor{ckaaa}{$3$} & \textcolor{ckaaa}{$0$}\\
\end{tabular}
\end{minipage}\hfill
\begin{minipage}[c]{0.74\textwidth}
\centering
\includegraphics[width=\linewidth]{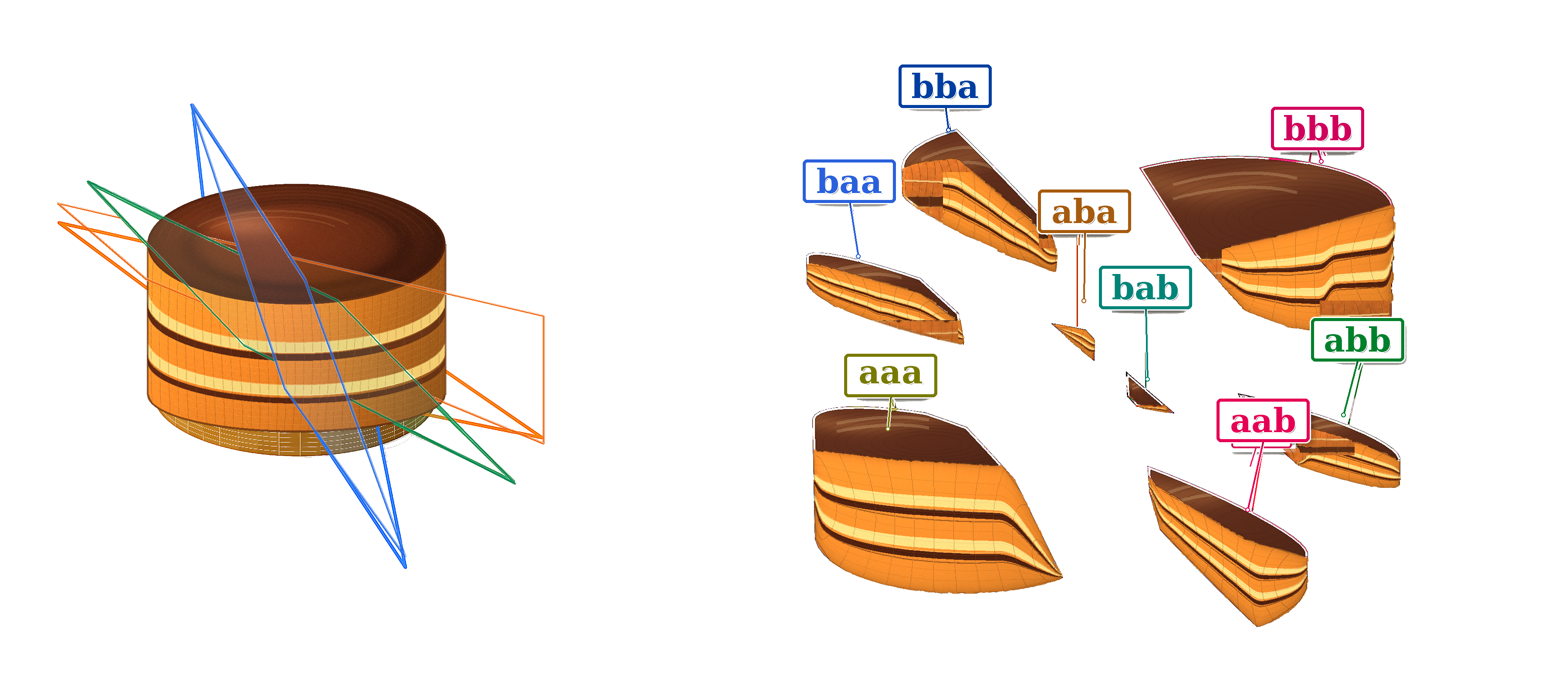}

\medskip
\captionof{figure}{Cake dissection for \(n=3\).}
\label{fig:cake-d3}
\end{minipage}
\end{figure}

\medskip\noindent\textbf{\(\mathbf{1}\)-tail refinement: \(\mathbf{d=4}\).}\enspace
Here \(s=1\), so \(\approx_1\) refines \(\sim_2\) by recording the \(1\)-tail
\(\mathcal{T}_1(\wrd)=(\theta)\), where \(\theta\) is the number of \(\ttb\)'s
after the final \(\tta\) (and \(\mathcal{T}_1(\ttb^n)=\varnothing\)).  Thus
\(\Psi_{4,n}\) identifies the chambers of \(\mathcal{H}_n^{(4)}\) with the
\(\approx_1\)-classes of \(\{\tta,\ttb\}^n\), counted by the circle-cutting
numbers \(1,2,4,8,16,31,\ldots\) (\seqnum{A000127}).
At \(s=1\), the construction of \cref{subsec:explicit-chamber-to-class-map} gives
\[
W(\lambda_{\Delta(C)})=
\begin{cases}
\ttb^{n}, & \text{if } \Delta(C)=\varnothing,\\
\ttb^{n-c_1}\tta\ttb^{c_1-1}, & \text{if } \Delta(C)=\{c_1\},\\
\ttb^{c_2-c_1-1}\tta^{n-c_2+2}\ttb^{c_1-1}, & \text{if } \Delta(C)=\{c_1<c_2\},\\
\tta^{c_4-c_3-1}\ttb^{c_3-c_2-1}\tta\ttb^{c_2-c_1}\tta^{n-c_4+2}\ttb^{c_1-1}, & \text{if } \#\Delta(C)\in\{3,4\},
\end{cases}
\]
where \(\widetilde{\Delta}=\{c_1<c_2<c_3<c_4\}\) is \(\Delta(C)\cup\{n+1\}\) if
\(\#\Delta(C)=3\) and \(\Delta(C)\) if \(\#\Delta(C)=4\).

\begin{example}
Take \(n=7\) and let \(C\) have sign word
\(\sigma(C)=\mathord{+}\mathord{-}\mathord{-}\mathord{+}\mathord{+}\mathord{-}\mathord{-}\),
so \(\Delta(C)=\{2,4,6\}\).  Since \(\#\Delta(C)=3\) we adjoin \(n+1=8\), giving
\(\widetilde{\Delta}=\{2,4,6,8\}\); hence
\(W(\lambda_{\Delta(C)})=\tta\ttb\tta\ttb\ttb\tta\ttb\).
\end{example}

\begin{remark}
For \(d=4\), the count \(\sum_{r=0}^{4}\binom{n}{r}\) carries two combinatorial
meanings.  Here it is the number of chambers of \(\mathcal{H}_n^{(4)}\), matched
above with the \(\approx_1\)-classes of \(\{\tta,\ttb\}^n\).  By Pascal's identity,
\[
\sum_{r=0}^{4}\binom{n}{r}=1+\binom{n+1}{2}+\binom{n+1}{4},
\]
the same number is the circle-cutting number~(\seqnum{A000127}): the regions
formed by the chords of \(n+1\) points in general position on a circle.  A
bijection with this dissection can likewise be given, but we omit it.
\end{remark}

\section{The cases \texorpdfstring{\(d=1\)}{d = 1} and
\texorpdfstring{\(d=2\)}{d = 2}}\label{sec:lower-dim}

For each \(d\geq 3\), \cref{sec:higher-tail} matched the chambers of
\(\mathcal{H}_{n}^{(d)}\) with the classes of an equivalence relation on
\(\{\tta,\ttb\}^n\).  This section treats \(d=1\) and \(d=2\): the first is
abelian equivalence \(\sim_1\), the second a new relation \(\sim_{3/2}\).

\subsection{The case \texorpdfstring{\(d=1\)}{d = 1}: abelian equivalence}\label{subsec:abelian-d1}

By bars and stars~\cite[Section~1.2]{Sta12}, \(\{\tta,\ttb\}^n\) has \(n+1\)
abelian classes~(\seqnum{A000027}), matching the chamber count of
\(\mathcal{H}_{n}^{(1)}\) by \cref{thm:SS}.

By \cref{thm:run-bijection}, \(+\sigma(C)\) has at most two runs, so each
chamber sign word has the form \(\sigma(C)={+}^{\,n-r(C)}\,{-}^{\,r(C)}\) such that
\(r(C)\in\intv{0}{n}\).  Set \(w_C:=\ttb^{\,n-r(C)}\,\tta^{\,r(C)}\) and define
\[
    \Psi_{1,n} \colon \operatorname{Ch}\bigl(\mathcal{H}_{n}^{(1)}\bigr)
    \longrightarrow \{\tta,\ttb\}^{n} / {\sim_{1}},
    \qquad
    C \longmapsto [w_C]_{\sim_{1}}.
\]

\cref{fig:d1_arrangement} illustrates this bijection for \(n=3\).

\begin{figure}[!ht]
\centering
\scalebox{0.7}{%
\begin{tikzpicture}[scale=1.0,
    pointlabel/.style={font=\scriptsize, text=daInk},
    signword/.style={font=\small, text=daInk},
    jvalue/.style={font=\scriptsize, text=daInk}]
\definecolor{daInk}{HTML}{3E2E38}   
\definecolor{daC3}{HTML}{E36A48}    
\definecolor{daC2}{HTML}{EE9468}    
\definecolor{daC1}{HTML}{F6BC8C}    
\definecolor{daC0}{HTML}{FBEAD8}    
\def\xa{0}\def\xb{2.6}\def\xc{5.2}\def\xd{7.8}\def\xe{10.4}
\def\xfL{-1.2}\def\xfR{11.6}  
\def\hh{0.42}                 
\shade[left color=white, right color=daC3] (\xfL,-\hh) rectangle (\xa,\hh);
\fill[daC3] (\xa,-\hh) rectangle (\xb,\hh);
\fill[daC2] (\xb,-\hh) rectangle (\xc,\hh);
\fill[daC1] (\xc,-\hh) rectangle (\xd,\hh);
\fill[daC0] (\xd,-\hh) rectangle (\xe,\hh);
\shade[left color=daC0, right color=white] (\xe,-\hh) rectangle (\xfR,\hh);
\foreach \x in {\xb,\xc,\xd}{%
  \draw[daInk, line width=0.5pt, opacity=0.5] (\x,-\hh-0.14) -- (\x,\hh+0.14);}
\draw[daInk, line width=1pt] (\xfL-0.5,0) -- (\xfR+0.5,0);
\draw[daInk, line width=1pt, ->] (\xfR+0.5,0) -- (\xfR+0.95,0);
\draw[daInk, line width=1pt, <-] (\xfL-0.95,0) -- (\xfL-0.5,0);
\foreach \x in {\xb,\xc,\xd}{%
  \fill[white] (\x,0) circle (4pt);
  \fill[daInk] (\x,0) circle (2.6pt);}
\node[signword, above=16pt] at ({(\xfL+\xb)/2},0) {$-\,-\,-$};
\node[signword, above=16pt] at ({(\xb+\xc)/2},0) {$+\,-\,-$};
\node[signword, above=16pt] at ({(\xc+\xd)/2},0) {$+\,+\,-$};
\node[signword, above=16pt] at ({(\xd+\xfR)/2},0) {$+\,+\,+$};
\node[pointlabel, below=14pt] at (\xb,0) {$-1$};
\node[pointlabel, below=14pt] at (\xc,0) {$-\tfrac{1}{2}$};
\node[pointlabel, below=14pt] at (\xd,0) {$-\tfrac{1}{3}$};
\node[jvalue, below=30pt] at ({(\xfL+\xb)/2},0) {$|w_C|_\tta=3$};
\node[jvalue, below=30pt] at ({(\xb+\xc)/2},0) {$|w_C|_\tta=2$};
\node[jvalue, below=30pt] at ({(\xc+\xd)/2},0) {$|w_C|_\tta=1$};
\node[jvalue, below=30pt] at ({(\xd+\xfR)/2},0) {$|w_C|_\tta=0$};
\end{tikzpicture}}
\caption{The arrangement $\mathcal{H}_3^{(1)}$, with chamber sign words and
letter counts $|w_C|_\tta$.}
\label{fig:d1_arrangement}
\end{figure}

\begin{proposition}\label{thm:d1bijection}
The map $\Psi_{1,n}$ is a bijection.
\end{proposition}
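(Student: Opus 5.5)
The plan is to factor \(\Psi_{1,n}\) as a composite of two bijections, each with the interval \(\intv{0}{n}\) in the middle. On the chamber side we use \cref{thm:run-bijection} at \(d=1\). On the word side we use the fact that \(\sim_1\) on \(\{\tta,\ttb\}^n\) is determined by \(|\wrd|_\tta\).

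\textbf{Step 1 (chambers to \(\intv{0}{n}\)).} By \cref{thm:run-bijection} with \(d=1\), the map \(C\mapsto\sigma(C)\) is a bijection from \(\operatorname{Ch}(\mathcal{H}_n^{(1)})\) onto the words \(\mathbf{u}\in\{+,-\}^n\) for which \(+\mathbf{u}\) has at most two runs.

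Such a word has the form \(+^{\,n-r}-^{\,r}\) with \(r\in\intv{0}{n}\). If \(r\ge1\) there is exactly one sign change, and it goes from \(+\) to \(-\); if \(r=0\) there is no sign change. Conversely, each \(r\in\intv{0}{n}\) gives such a word, and distinct values of \(r\) give distinct words. Hence \(C\mapsto r(C)\) is a bijection \(\operatorname{Ch}(\mathcal{H}_n^{(1)})\to\intv{0}{n}\).

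\textbf{Step 2 (\(\intv{0}{n}\) to classes).} By the characterisation in \cref{subsec:main-relations}, \(\wrd\sim_1\wu\) if and only if \(|\wrd|_\tta=|\wu|_\tta\). So \([\wrd]_{\sim_1}\mapsto|\wrd|_\tta\) is a well-defined injection into \(\intv{0}{n}\). It is surjective, since \(\ttb^{\,n-j}\tta^{\,j}\) has exactly \(j\) occurrences of \(\tta\).

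\textbf{Step 3 (composition).} Since \(|w_C|_\tta=|\ttb^{\,n-r(C)}\tta^{\,r(C)}|_\tta=r(C)\), the map \(\Psi_{1,n}\) equals the composite of the bijection of Step~1 with the inverse of the bijection of Step~2. It is therefore a bijection.

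No step is a real obstacle. The only point needing care is the normal form in Step~1. Because the augmented word begins with the fixed \(+\) at position \(0\), the single permitted sign change must be from \(+\) to \(-\), and the case of no change gives \(r=0\). This is what forces \(\sigma(C)={+}^{\,n-r}{-}^{\,r}\) rather than allowing the pattern \({-}\cdots{-}{+}\cdots{+}\).
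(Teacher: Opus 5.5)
Your proof is correct and takes the same approach as the paper. Both arguments use \cref{thm:run-bijection} at \(d=1\) to index chambers by \(r(C)\in\intv{0}{n}\), observe that \(|w_C|_\tta=r(C)\), and use that \(\sim_1\)-classes on \(\{\tta,\ttb\}^n\) are determined by \(|\wrd|_\tta\in\intv{0}{n}\); you simply spell out the normal form \({+}^{\,n-r}{-}^{\,r}\) and the factorisation through \(\intv{0}{n}\) more explicitly than the paper does.
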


\begin{proof}
We have \(|w_C|_\tta = r(C)\).  Since an abelian class in
\(\{\tta,\ttb\}^n\) is determined by \(|\wrd|_\tta\in\intv{0}{n}\), and since
\(r(C)\) takes each value in \(\intv{0}{n}\) exactly once, the map is bijective.
\end{proof}

Hence $\#\bigl(\{\tta,\ttb\}^n/{\sim_1}\bigr)=n+1$, the chamber count of
$\mathcal{H}_n^{(1)}$.

\begin{remark}
In the Young-diagram picture, $\wu\sim_1\wrd$ iff $\lambda(\wu)$ and
$\lambda(\wrd)$ have the same number of rows.
\end{remark}

\subsection{The case \texorpdfstring{\(d=2\)}{d = 2}: \texorpdfstring{\(3/2\)}{3/2}-binomial equivalence}
\label{subsec:three-halves-d2}\label{subsec:def-32}

Suppose that \(\wu,\wrd\in\{\tta,\ttb\}^n\).  Then \cref{def:sim-32} specialises to
\[
   \wu\sim_{3/2}\wrd
   \quad\Longleftrightarrow\quad
   |\wu|_{\tta}=|\wrd|_{\tta}
   \text{ and }
   \operatorname{strip}(\wu)=\operatorname{strip}(\wrd).
\]
Thus \(\sim_{3/2}\) coarsens \(\sim_2\) by replacing \(\binom{\wrd}{\tta\ttb}\)
with \(\operatorname{strip}(\wrd)\).

An equivalence \(\equiv\) on \(\{\tta,\ttb\}^{*}\)
is a \emph{concatenation congruence} if \(\wu\equiv\wu'\) and \(\wv\equiv\wv'\)
imply \(\wu\wv\equiv\wu'\wv'\).

\begin{proposition}
\label{prop:three-halves-chain}
The relation \(\sim_{3/2}\) on \(\{\tta,\ttb\}^{*}\) satisfies
\(\sim_2\subsetneq\sim_{3/2}\subsetneq\sim_1\) and is not a concatenation
congruence.
\end{proposition}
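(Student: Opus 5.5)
The proof splits into three parts: the two inclusions, their strictness, and the failure of the congruence property.

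\emph{Inclusions.} For \(\sim_2\subseteq\sim_{3/2}\): if \(\wu\sim_2\wrd\), then \(|\wu|_\tta=|\wrd|_\tta\) and \(\binom{\wu}{\tta\ttb}=\binom{\wrd}{\tta\ttb}\). Since \(\operatorname{strip}\) is a function of this pair, \(\operatorname{strip}(\wu)=\operatorname{strip}(\wrd)\). Note that equal \(\tta\)-counts together with equal length (abelian equivalence) is part of \(\sim_2\). For \(\sim_{3/2}\subseteq\sim_1\): this holds by \cref{def:sim-32}, since \(\sim_1\) is a clause of that definition.

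\emph{Strictness.} I would reuse \cref{ex:three-halves}. There \(w_1=\tta\ttb\tta\ttb\) and \(w_2=\tta\ttb\ttb\tta\) are \(\sim_{3/2}\)-equivalent, with strip \(\lfloor 3/2\rfloor=\lfloor 2/2\rfloor=1\). However \(\binom{w_1}{\tta\ttb}=3\ne 2=\binom{w_2}{\tta\ttb}\), so \(\sim_2\subsetneq\sim_{3/2}\). Next, \(w_3=\tta\tta\ttb\ttb\) is abelian equivalent to \(w_1\), but \(\operatorname{strip}(w_3)=2\ne 1\). Hence \(\sim_{3/2}\subsetneq\sim_1\). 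The only work here is recomputing the binomial coefficients.

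\emph{Not a congruence.} This is the main step, since a witness has to be found. I would take \(\wu=w_1=\tta\ttb\tta\ttb\) and \(\wu'=w_2=\tta\ttb\ttb\tta\), both with \(\tta\)-count \(2\) and \(\binom{\cdot}{\tta\ttb}\) values \(3\) and \(2\). I would then append a common suffix \(\wv=\wv'\), trivially equivalent to itself, chosen so that the floors separate. With \(\wv=\ttb\), the \(\tta\)-count stays \(2\) and each \(\tta\ttb\)-count rises by \(2\), giving \(5\) and \(4\). The strips become \(\lfloor 5/2\rfloor=2\) and \(\lfloor 4/2\rfloor=2\), which are equal, so \(\ttb\) fails.

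I would instead try \(\wv=\tta\). The \(\tta\)-count becomes \(3\), and the \(\tta\ttb\)-counts stay \(3\) and \(2\). The strips are \(\lfloor 3/3\rfloor=1\) and \(\lfloor 2/3\rfloor=0\), which differ. Hence \(\wu\tta\not\sim_{3/2}\wu'\tta\), even though \(\wu\sim_{3/2}\wu'\) and \(\tta\sim_{3/2}\tta\). This shows \(\sim_{3/2}\) is not a concatenation congruence. The key point is that the strip depends nonlinearly on the \(\tta\)-count, so changing the denominator separates values that previously floored together.
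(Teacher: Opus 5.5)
Your proposal is correct and follows essentially the same route as the paper. The inclusions come straight from the definitions, strictness comes from \cref{ex:three-halves}, and non-congruence comes from appending \(\tta\) to the pair \(\tta\ttb\tta\ttb\), \(\tta\ttb\ttb\tta\), which gives strips \(1\) and \(0\). Your computations check out, including the side observation that appending \(\ttb\) does not separate the two words.
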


\begin{proof}
The inclusions \(\sim_2\subseteq\sim_{3/2}\subseteq\sim_1\) are immediate from
the definitions, and \cref{ex:three-halves} makes both strict.  For the same
pair \(\wu=\tta\ttb\tta\ttb\), \(\wrd=\tta\ttb\ttb\tta\), adjoining \(\tta\)
yields \(\binom{\wu\tta}{\tta\ttb}=3\), \(\binom{\wrd\tta}{\tta\ttb}=2\), and
\(|\wu\tta|_{\tta}=|\wrd\tta|_{\tta}=3\), so \(\operatorname{strip}(\wu\tta)=1\)
and \(\operatorname{strip}(\wrd\tta)=0\); hence \(\wu\tta\not\sim_{3/2}\wrd\tta\).
\end{proof}

\begin{proposition}
\label{thm:three-halves-count}\label{thm:32-count}
The number of \(\sim_{3/2}\)-classes of \(\{\tta,\ttb\}^n\) is \(1+\binom{n+1}{2}\).
\end{proposition}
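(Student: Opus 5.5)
We count directly, by first fixing the number of occurrences of \(\tta\) and then counting the attainable strip values. By \cref{def:sim-32}, the \(\sim_{3/2}\)-class of \(\wrd\in\{\tta,\ttb\}^n\) is determined by the pair \(\bigl(|\wrd|_\tta,\operatorname{strip}(\wrd)\bigr)\), and distinct pairs give distinct classes. So the task is to count the realisable pairs \((j,m)\).

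\textbf{Case \(j=0\).} Only the word \(\ttb^n\) has \(|\wrd|_\tta=0\), and by convention its strip is \(0\). This contributes exactly one class.

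\textbf{Case \(1\le j\le n\).} By \cref{prop:complete-invariant}, as \(\wrd\) ranges over words with \(|\wrd|_\tta=j\), the value \(\binom{\wrd}{\tta\ttb}\) takes every integer \(\kappa\in\intv{0}{j(n-j)}\). Hence \(\operatorname{strip}(\wrd)=\lfloor\kappa/j\rfloor\) takes every value in \(\intv{0}{n-j}\): the map \(\kappa\mapsto\lfloor\kappa/j\rfloor\) sends \(\intv{0}{j(n-j)}\) onto \(\intv{0}{n-j}\), since \(\kappa=jm\) realises each \(m\) and \(\lfloor j(n-j)/j\rfloor=n-j\) is the maximum. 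So for each \(j\ge1\) there are exactly \(n-j+1\) classes.

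\textbf{Summation.} Adding the two cases gives
\[
1+\sum_{j=1}^{n}(n-j+1)=1+\sum_{k=1}^{n}k=1+\binom{n+1}{2},
\]
which is the claimed count.

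There is no real obstacle in this argument. The only point needing care is the \(j=0\) convention: for \(j=0\) there is one strip value, not \(n+1\). Treating \(j=0\) like the other cases would overcount, so it must be handled separately, which is why the answer is \(1+\binom{n+1}{2}\) rather than \(\binom{n+2}{2}\).

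As a consistency check, the result agrees with the chamber count \(\sum_{r=0}^{2}\binom{n}{r}=1+n+\binom{n}{2}=1+\binom{n+1}{2}\) of \(\mathcal{H}_n^{(2)}\) obtained from \cref{thm:SS}.
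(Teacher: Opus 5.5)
Your proposal is correct and follows the paper's proof: the single class \(\{\ttb^n\}\) for \(|\wrd|_\tta=0\), then \cref{prop:complete-invariant} to show that \(\operatorname{strip}\) takes all \(n-j+1\) values in \(\intv{0}{n-j}\) when \(|\wrd|_\tta=j\ge 1\), then summing to \(1+\binom{n+1}{2}\). Your explicit check that \(\kappa\mapsto\lfloor\kappa/j\rfloor\) maps \(\intv{0}{j(n-j)}\) onto \(\intv{0}{n-j}\) fills in a step the paper leaves implicit.
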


\begin{proof}
By \cref{def:sim-32}, a \(\sim_{3/2}\)-class in \(\{\tta,\ttb\}^n\) is determined by the
pair \((|\wrd|_{\tta},\operatorname{strip}(\wrd))\).  The word \(\ttb^{n}\) forms the
unique class with \(|\wrd|_{\tta}=0\).  When \(|\wrd|_{\tta}\geq 1\),
\(\binom{\wrd}{\tta\ttb}\) ranges over \(\intv{0}{|\wrd|_{\tta}\,|\wrd|_{\ttb}}\)
by \cref{prop:complete-invariant}, so \(\operatorname{strip}(\wrd)\) ranges over
\(\intv{0}{|\wrd|_{\ttb}}\) --- that is, \(|\wrd|_{\ttb}+1\) classes for each
nonzero value of \(|\wrd|_{\tta}\).  As \(|\wrd|_{\tta}\) runs through
\(\intv{1}{n}\) these give \(n+(n-1)+\cdots+1=\binom{n+1}{2}\) classes, so
\[
\#\bigl(\{\tta,\ttb\}^n/\sim_{3/2}\bigr)=1+\binom{n+1}{2}.
\]
\end{proof}

\medskip\noindent\textbf{Geometric reformulation.}\enspace
In the Young-diagram picture, \(\lambda(\wrd)\) lies in the \emph{ambient
rectangle}
\[
    \operatorname{Rect}(\wrd) \defeq [0,|\wrd|_{\tta}]\times[0,|\wrd|_{\ttb}],
\]
and has area \(|\lambda(\wrd)|=\binom{\wrd}{\tta\ttb}\).  If
\(|\wrd|_{\tta}\geq 1\), the \emph{mean column-height} of \(\lambda(\wrd)\) is
\[
    \mu(\wrd)\defeq\frac{|\lambda(\wrd)|}{|\wrd|_{\tta}}
                =\frac{\binom{\wrd}{\tta\ttb}}{|\wrd|_{\tta}}
                \in[0,|\wrd|_{\ttb}],
\]
and \(\operatorname{strip}(\wrd)=\lfloor\mu(\wrd)\rfloor\) is the index of the
unit strip of \(\operatorname{Rect}(\wrd)\) met by the mean line
\(y=\mu(\wrd)\), the top value \(\mu(\wrd)=|\wrd|_{\ttb}\) included via the
floor.

The rectangle and strip give a geometric criterion for
\(\sim_{3/2}\)-equivalence.

\begin{proposition}\label{thm:32-tfae}
Let \(\wu,\wrd\in\{\tta,\ttb\}^{n}\).  Then \(\wu\sim_{3/2}\wrd\) iff
\(\operatorname{Rect}(\wu)=\operatorname{Rect}(\wrd)\) and
\(\operatorname{strip}(\wu)=\operatorname{strip}(\wrd)\).
\end{proposition}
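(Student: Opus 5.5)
The plan is to unwind both sides to the pair of invariants \((|\wrd|_{\tta},\operatorname{strip}(\wrd))\), using that \(\wu\) and \(\wrd\) have the same length \(n\). By \cref{def:sim-32}, \(\wu\sim_{3/2}\wrd\) holds iff \(\wu\sim_1\wrd\) and \(\operatorname{strip}(\wu)=\operatorname{strip}(\wrd)\). So it suffices to check one thing: for words of a fixed length \(n\), the condition \(\wu\sim_1\wrd\) is equivalent to \(\operatorname{Rect}(\wu)=\operatorname{Rect}(\wrd)\).

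For the forward direction, assume \(\wu\sim_1\wrd\). Then \(|\wu|_{\tta}=|\wrd|_{\tta}\), and hence
\[
|\wu|_{\ttb}=n-|\wu|_{\tta}=n-|\wrd|_{\tta}=|\wrd|_{\ttb}.
\]
The two rectangles \([0,|\cdot|_{\tta}]\times[0,|\cdot|_{\ttb}]\) therefore coincide.

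For the converse, equality of the rectangles \([0,a]\times[0,b]\) and \([0,a']\times[0,b']\) with nonnegative integer sides forces \(a=a'\), since the first projection of each rectangle recovers its first side. Thus \(|\wu|_{\tta}=|\wrd|_{\tta}\), which is \(\wu\sim_1\wrd\) by the binary description in \cref{subsec:main-relations}.

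With this equivalence in hand, the strip condition appears verbatim on both sides, so the proposition follows. There is no real obstacle. The only point needing care is the degenerate case \(|\wrd|_{\tta}=0\), where the rectangle is the segment \(\{0\}\times[0,n]\) and the strip is \(0\) by convention. The argument above never uses \(\mu\) or nondegeneracy of the rectangle, so it covers this case without change.
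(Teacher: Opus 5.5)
Your proposal is correct and follows the paper's argument. At fixed length \(n\), equality of \(\operatorname{Rect}\) is equivalent to \(|\wu|_{\tta}=|\wrd|_{\tta}\), that is \(\wu\sim_1\wrd\), and the strip condition then matches \cref{def:sim-32} verbatim; you just write out both directions of the rectangle equivalence, and the degenerate case, more explicitly than the paper does.
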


\begin{proof}
At fixed length \(n\), \(\operatorname{Rect}(\wu)\) is determined by \(|\wu|_{\tta}\),
so the first condition reads \(|\wu|_{\tta}=|\wrd|_{\tta}\), that is \(\wu\sim_1\wrd\).
Together with \(\operatorname{strip}(\wu)=\operatorname{strip}(\wrd)\), this is exactly
the definition of \(\sim_{3/2}\) in \cref{def:sim-32}, giving \(\wu\sim_{3/2}\wrd\).
\end{proof}

\cref{fig:strip-example} illustrates this on the three words
\(\tta\ttb\tta\ttb\), \(\tta\ttb\ttb\tta\), \(\tta\tta\ttb\ttb\) of
\cref{subsec:def-32}: the first two share \(\operatorname{strip}=1\) and are
therefore \(3/2\)-binomially equivalent, while the third has
\(\operatorname{strip}=2\) and is not.

\begin{figure}[ht]
\centering
\begin{tikzpicture}[x=0.74cm, y=0.74cm]
  \definecolor{rdInk}{HTML}{3E2E38}    
  \definecolor{rdGold}{HTML}{E36A48}   
  \definecolor{rdBlueF}{HTML}{CBA3C2}  
  \definecolor{rdGrey}{HTML}{F4EDE1}   
  \definecolor{rdFrame}{HTML}{CFC4CC}  
  \tikzset{
    amb/.style={draw=rdFrame, line width=0.6pt,
                dash pattern=on 2pt off 2pt},
    ce/.style={draw=rdInk, line width=0.5pt},
    meanl/.style={draw=rdGold, line width=1.3pt, line cap=round},
    mlab/.style={font=\small\bfseries, text=rdGold, anchor=west, inner sep=2pt},
    ttl/.style={font=\small, text=rdInk, anchor=south},
    ftr/.style={font=\footnotesize, text=rdInk, anchor=north, inner sep=4pt},
    rel/.style={font=\large, text=rdInk}
  }
  \begin{scope}[shift={(0,0)}]
    \node[ttl] at (1,2.34) {$\wrd=\tta\ttb\tta\ttb$};
    \fill[rdBlueF] (0,1) rectangle (2,2);   
    \fill[rdBlueF] (0,0) rectangle (1,1);   
    \fill[rdGrey]  (1,0) rectangle (2,1);   
    \draw[ce] (0,1) rectangle (1,2);
    \draw[ce] (1,1) rectangle (2,2);
    \draw[ce] (0,0) rectangle (1,1);
    \draw[ce] (1,0) rectangle (2,1);
    \draw[amb] (0,0) rectangle (2,2);
    \draw[meanl] (0,1.5) -- (2,1.5);
    \node[mlab] at (2.1,1.5) {$\mu=\tfrac{3}{2}$};
    \node[ftr] at (1,-0.1) {$\operatorname{strip}=1$};
  \end{scope}
  \node[rel] at (3.55,2.34) {$\sim_{3/2}$};
  \begin{scope}[shift={(4.7,0)}]
    \node[ttl] at (1,2.34) {$\wrd'=\tta\ttb\ttb\tta$};
    \fill[rdBlueF] (0,0) rectangle (1,2);   
    \fill[rdGrey]  (1,0) rectangle (2,2);   
    \draw[ce] (0,0) rectangle (1,1);
    \draw[ce] (0,1) rectangle (1,2);
    \draw[ce] (1,0) rectangle (2,1);
    \draw[ce] (1,1) rectangle (2,2);
    \draw[amb] (0,0) rectangle (2,2);
    \draw[meanl] (0,1) -- (2,1);
    \node[mlab] at (2.1,1) {$\mu=1$};
    \node[ftr] at (1,-0.1) {$\operatorname{strip}=1$};
  \end{scope}
  \node[rel] at (8.25,2.34) {$\not\sim_{3/2}$};
  \begin{scope}[shift={(9.4,0)}]
    \node[ttl] at (1,2.34) {$\wrd''=\tta\tta\ttb\ttb$};
    \fill[rdBlueF] (0,0) rectangle (2,2);   
    \draw[ce] (0,0) rectangle (1,1);
    \draw[ce] (1,0) rectangle (2,1);
    \draw[ce] (0,1) rectangle (1,2);
    \draw[ce] (1,1) rectangle (2,2);
    \draw[amb] (0,0) rectangle (2,2);
    \draw[meanl] (0,2) -- (2,2);
    \node[mlab] at (2.1,2) {$\mu=2$};
    \node[ftr] at (1,-0.1) {$\operatorname{strip}=2$};
  \end{scope}
\end{tikzpicture}
\caption{Geometric criterion for checking $3/2$-equivalence of the words $\wrd$,
$\wrd'$, and $\wrd''$.}
\label{fig:strip-example}
\end{figure}

\medskip\noindent\textbf{Chamber model for \(\boldsymbol{\sim_{3/2}}\).}\enspace
By \cref{thm:three-halves-count}, the \(\sim_{3/2}\)-classes of
\(\{\tta,\ttb\}^n\) are counted by the \emph{lazy caterer's sequence}
\(1,2,4,7,11,16,\ldots\)~(\seqnum{A000124}), matching the chamber count of
\(\mathcal{H}_{n}^{(2)}\) by \cref{thm:SS}.

By \cref{thm:run-bijection}, \(+\sigma(C)\) has at most three runs.  For the
all-plus chamber, set \(r(C)=t(C)=0\); otherwise write uniquely
\(\sigma(C)={+}^{\,t(C)}\,{-}^{\,r(C)}\,{+}^{\,n-r(C)-t(C)}\), where
\(r(C)\ge 1\) is the length of the unique block of minus signs and \(t(C)\) is
the number of leading plus signs.  Set
\[
   w_C:=\ttb^{\,n-r(C)-t(C)}\,\tta^{\,r(C)}\,\ttb^{\,t(C)},
\]
and define
\[
\Psi_{2,n}\colon\operatorname{Ch}\bigl(\mathcal{H}_n^{(2)}\bigr)
       \longrightarrow \{\tta,\ttb\}^n/\sim_{3/2},
       \qquad
       C\longmapsto [w_C]_{\sim_{3/2}}.
\]

\begin{proposition}
\label{thm:d2-three-halves-dictionary}\label{thm:d2bijection}
The map \(\Psi_{2,n}\) is a bijection.
\end{proposition}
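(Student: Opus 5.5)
The plan is to compute the complete \(\sim_{3/2}\)-invariant \(\bigl(|w_C|_{\tta},\operatorname{strip}(w_C)\bigr)\) of each canonical representative explicitly in terms of the pair \((r(C),t(C))\). We then compare the resulting parameter set with the one arising in the proof of \cref{thm:three-halves-count}. By \cref{thm:run-bijection}, the chambers of \(\mathcal{H}_n^{(2)}\) correspond bijectively to words \(\mathbf{u}\) such that \(+\mathbf{u}\) has at most three runs. Such a word is either \({+}^n\), or uniquely of the form \({+}^{t}{-}^{r}{+}^{n-r-t}\) with \(r\ge 1\) and \(0\le t\le n-r\). Hence \(C\mapsto(r(C),t(C))\) is a bijection from \(\operatorname{Ch}(\mathcal{H}_n^{(2)})\) onto
\[
\{(0,0)\}\cup\{(r,t): 1\le r\le n,\ 0\le t\le n-r\}.
\]

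Next we compute the invariant for \(w_C=\ttb^{\,n-r-t}\tta^{\,r}\ttb^{\,t}\). Clearly \(|w_C|_{\tta}=r\). Each of the \(r\) letters \(\tta\) is followed by exactly \(t\) letters \(\ttb\), so \(\lambda(w_C)=(t^{r})\) and \(\binom{w_C}{\tta\ttb}=rt\). Thus, for \(r\ge1\), we get \(\operatorname{strip}(w_C)=\lfloor rt/r\rfloor=t\). For the all-plus chamber, \(w_C=\ttb^n\) has invariant \((0,0)\). So the invariant of \(\Psi_{2,n}(C)\) is exactly \((r(C),t(C))\).

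Finally, as in the proof of \cref{thm:three-halves-count}, a \(\sim_{3/2}\)-class is determined by \((|\wrd|_{\tta},\operatorname{strip}(\wrd))\). The realised pairs are \((0,0)\) together with \((j,\tau)\) for \(1\le j\le n\) and \(0\le\tau\le n-j\). This is precisely the parameter set above. Injectivity then follows because distinct chambers have distinct \((r,t)\), hence distinct invariants, hence distinct classes. Surjectivity follows because every admissible pair \((j,\tau)\) is \((r(C),t(C))\) for some chamber \(C\). Alternatively, injectivity together with the matching counts of \cref{thm:SS} and \cref{thm:three-halves-count} already gives bijectivity.

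The only step needing care is the identification of the realised strip values, namely that \(\operatorname{strip}\) attains every value in \(\intv{0}{|\wrd|_{\ttb}}\) and no other. This is already contained in the proof of \cref{thm:three-halves-count}; in any case, the explicit representatives \(w_C\) themselves exhibit every value.
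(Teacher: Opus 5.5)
Your proposal is correct and follows the paper's proof: you compute that the invariant \(\bigl(|w_C|_{\tta},\operatorname{strip}(w_C)\bigr)\) equals \((r(C),t(C))\), treating the all-plus chamber separately, and then match the chamber parameter set with the index set of \(\sim_{3/2}\)-classes from \cref{thm:three-halves-count}. Your alternative closing via injectivity plus the equal counts is also valid.
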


\begin{proof}
If \(r(C)=0\), then \(w_C=\ttb^n\) and
\(\bigl(|w_C|_{\tta},\operatorname{strip}(w_C)\bigr)=(0,0)\).  If \(r(C)\ge 1\),
then \(|w_C|_{\tta}=r(C)\) and \(\binom{w_C}{\tta\ttb}=r(C)\,t(C)\), giving
\(\operatorname{strip}(w_C)=\lfloor r(C)\,t(C)/r(C)\rfloor=t(C)\).  Either way,
\(\bigl(|w_C|_{\tta},\operatorname{strip}(w_C)\bigr)=(r(C),t(C))\).  By
\cref{thm:three-halves-count}, the \(\sim_{3/2}\)-classes are indexed by
\[
   \{(0,0)\}\cup
   \{(r,t):r\in\intv{1}{n},\ t\in\intv{0}{n-r}\},
\]
and the chamber parameters \((r(C),t(C))\) run through the same set; hence
\(\Psi_{2,n}\) is bijective.
\end{proof}

\cref{fig:pizza-d2} illustrates this bijection for \(n=3\): \(\Psi_{2,3}\) matches
the seven chambers of \(\mathcal{H}_3^{(2)}\) with the \(\sim_{3/2}\)-classes of
\(\{\tta,\ttb\}^3\).

\begin{figure}[ht]
\centering

\definecolor{crustdark}{RGB}{150, 82, 40}
\definecolor{crustmid}{RGB}{196,128, 60}
\definecolor{crusthi}{RGB}{224,170,104}
\definecolor{charA}{RGB}{108, 58, 30}
\definecolor{charB}{RGB}{ 72, 40, 24}
\definecolor{cheese}{RGB}{251,221,148}
\definecolor{cheeselt}{RGB}{255,240,196}
\definecolor{cheesedk}{RGB}{236,196,120}
\definecolor{saucered}{RGB}{171, 47, 41}
\definecolor{pepred}{RGB}{182, 52, 52}
\definecolor{peprim}{RGB}{128, 30, 36}
\definecolor{pephi}{RGB}{210, 96, 86}
\definecolor{pepspeck}{RGB}{120, 28, 32}
\definecolor{basilg}{RGB}{ 70,134, 66}
\definecolor{basildk}{RGB}{ 44, 96, 50}
\definecolor{basilvein}{RGB}{120,176,104}
\definecolor{seedcol}{RGB}{ 92, 44, 30}
\definecolor{wordcol}{RGB}{ 60, 32, 18}
\definecolor{platecol}{RGB}{255,251,236}

\definecolor{cBBB}{RGB}{198, 58, 46}   
\definecolor{cBBA}{RGB}{176,118, 30}   
\definecolor{cBAB}{RGB}{ 60,128, 58}   
\definecolor{cABB}{RGB}{205,110, 40}   
\definecolor{cBAA}{RGB}{138,148, 52}   
\definecolor{cAAB}{RGB}{150,108, 66}   
\definecolor{cAAA}{RGB}{160,120, 45}   

\begin{minipage}[c]{0.43\textwidth}
\centering
\renewcommand{\arraystretch}{1.45}
\setlength{\tabcolsep}{6pt}
\begin{tabular}{@{}clccc@{}}
 & \textbf{word} & {\boldmath$\sigma$} & {\boldmath$|w|_{\tta}$} & {\boldmath$\operatorname{strip}(w)$}\\
\noalign{\global\setlength{\arrayrulewidth}{0.9pt}}\cmidrule(lr){2-5}%
\noalign{\global\setlength{\arrayrulewidth}{0.4pt}}
\textcolor{cBBB}{$\bullet$} & \textcolor{cBBB}{$\ttb\ttb\ttb$}
  & \textcolor{cBBB}{$+\,+\,+$} & \textcolor{cBBB}{$0$} & \textcolor{cBBB}{$0$}\\
\textcolor{cBBA}{$\bullet$} & \textcolor{cBBA}{$\ttb\ttb\tta$}
  & \textcolor{cBBA}{$-\,+\,+$} & \textcolor{cBBA}{$1$} & \textcolor{cBBA}{$0$}\\
\textcolor{cBAB}{$\bullet$} & \textcolor{cBAB}{$\ttb\tta\ttb$}
  & \textcolor{cBAB}{$+\,-\,+$} & \textcolor{cBAB}{$1$} & \textcolor{cBAB}{$1$}\\
\textcolor{cABB}{$\bullet$} & \textcolor{cABB}{$\tta\ttb\ttb$}
  & \textcolor{cABB}{$+\,+\,-$} & \textcolor{cABB}{$1$} & \textcolor{cABB}{$2$}\\
\textcolor{cBAA}{$\bullet$} & \textcolor{cBAA}{$\ttb\tta\tta$}
  & \textcolor{cBAA}{$-\,-\,+$} & \textcolor{cBAA}{$2$} & \textcolor{cBAA}{$0$}\\
\textcolor{cAAB}{$\bullet$} & \textcolor{cAAB}{$\tta\tta\ttb$}
  & \textcolor{cAAB}{$+\,-\,-$} & \textcolor{cAAB}{$2$} & \textcolor{cAAB}{$1$}\\
\textcolor{cAAA}{$\bullet$} & \textcolor{cAAA}{$\tta\tta\tta$}
  & \textcolor{cAAA}{$-\,-\,-$} & \textcolor{cAAA}{$3$} & \textcolor{cAAA}{$0$}\\
\end{tabular}
\end{minipage}%
\hfill
\begin{minipage}[c]{0.54\textwidth}
\centering
\begin{tikzpicture}[scale=1.35, font=\small]
  \def\R{2}

  \coordinate (V1)  at (    0   ,  0.4   );
  \coordinate (V2)  at (-0.35   , -0.2   );
  \coordinate (V3)  at ( 0.35   , -0.2   );
  \coordinate (P1a) at (-1.179  , -1.617 );
  \coordinate (P1b) at ( 0.829  ,  1.818 );
  \coordinate (P2a) at (-0.829  ,  1.818 );
  \coordinate (P2b) at ( 1.179  , -1.617 );
  \coordinate (P3a) at (-1.99   , -0.2   );
  \coordinate (P3b) at ( 1.99   , -0.2   );

  \definecolor{sBBB}{RGB}{226, 84, 70}   
  \definecolor{sBBA}{RGB}{246,202, 92}   
  \definecolor{sBAB}{RGB}{122,180, 98}   
  \definecolor{sABB}{RGB}{243,150, 70}   
  \definecolor{sBAA}{RGB}{190,200, 98}   
  \definecolor{sAAB}{RGB}{210,168,122}   
  \definecolor{sAAA}{RGB}{247,217,150}   

  \begin{scope}[shift={(0.05,-0.06)}]
    \fill[black, opacity=0.12] (0,0) circle (2.26);
  \end{scope}

  \fill[crustdark] (0,0) circle (2.24);
  \fill[crustmid]  (0,0) circle (2.15);
  \fill[crusthi]   (0,0) circle (2.07);
  \foreach \a/\rr/\sx/\sy in {
      18/2.13/2.4/1.6, 52/2.11/1.8/2.1, 88/2.14/2.6/1.5,
      121/2.10/2.0/1.9, 158/2.13/2.3/1.6, 193/2.11/1.7/2.2,
      232/2.13/2.5/1.5, 268/2.10/1.9/2.0, 304/2.14/2.4/1.7, 339/2.11/2.1/1.9}{
    \fill[charA, opacity=0.85, rotate around={\a:(0,0)}]
      ([shift={(\a:\rr)}]0,0) ellipse (\sx pt and \sy pt);
  }
  \foreach \a/\rr in {36/2.12, 105/2.11, 176/2.12, 250/2.11, 322/2.12}{
    \fill[charB, opacity=0.7] ([shift={(\a:\rr)}]0,0) circle (1.1pt);
  }

  \begin{scope}
    \clip (0,0) circle (\R);
    \fill[cheese] (0,0) circle (\R);

    \begin{scope}[transparency group, opacity=0.55]
      \fill[sBBB] (V1) -- (P1b)
        arc[start angle=65.5,    end angle=114.5,  radius=\R] -- cycle;
      \fill[sBBA] (V1) -- (V3) -- (P3b)
        arc[start angle=-5.74,   end angle=65.5,   radius=\R] -- cycle;
      \fill[sBAB] (V3) -- (P2b)
        arc[start angle=-53.9,   end angle=-5.74,  radius=\R] -- cycle;
      \fill[sABB] (V3) -- (V2) -- (P1a)
        arc[start angle=-126.1,  end angle=-53.9,  radius=\R] -- cycle;
      \fill[sBAA] (V2) -- (P3a)
        arc[start angle=185.74,  end angle=233.9,  radius=\R] -- cycle;
      \fill[sAAB] (V2) -- (V1) -- (P2a)
        arc[start angle=114.5,   end angle=185.74, radius=\R] -- cycle;
      \fill[sAAA] (V1) -- (V2) -- (V3) -- cycle;
    \end{scope}

    \foreach \x/\y/\r in {
        -1.3/0.9/0.20, 1.1/1.1/0.16, 1.5/-0.3/0.22, -0.5/1.5/0.15,
        0.4/-1.4/0.18, -1.5/-0.6/0.17, 1.3/0.3/0.14, -0.9/-1.2/0.20,
        0.9/-0.9/0.15, -1.7/0.2/0.16, 0.2/1.2/0.13, 1.6/0.7/0.15}{
      \fill[cheeselt, opacity=0.28] (\x,\y) circle (\r);
    }
    \foreach \x/\y/\r in {
        -1.1/1.3/0.14, 1.4/-1.0/0.16, -1.6/-0.2/0.13, 0.7/1.4/0.12,
        1.0/-1.4/0.14, -0.7/-1.5/0.13, 1.7/0.1/0.12}{
      \fill[cheesedk, opacity=0.22] (\x,\y) circle (\r);
    }

    \foreach \k in {1,...,7}{
      \draw[crustdark, opacity=0.05, line width=2pt]
        (0,0) circle ({\R-0.03*\k});
    }
  \end{scope}

  \draw[saucered, line width=1.5pt, line cap=round] (P1a) -- (P1b);
  \draw[saucered, line width=1.5pt, line cap=round] (P2a) -- (P2b);
  \draw[saucered, line width=1.5pt, line cap=round] (P3a) -- (P3b);

  \newcommand{\pepp}[3]{%
    \begin{scope}
      \shade[inner color=pephi, outer color=pepred] (#1,#2) circle (#3);
      \draw[peprim, line width=0.8pt] (#1,#2) circle ({#3-0.012});
      \fill[pepspeck] ([shift={( 0.30*#3, 0.22*#3)}]#1,#2) circle (0.6pt);
      \fill[pepspeck] ([shift={(-0.28*#3, 0.30*#3)}]#1,#2) circle (0.5pt);
      \fill[pepspeck] ([shift={( 0.10*#3,-0.32*#3)}]#1,#2) circle (0.6pt);
      \fill[pepspeck] ([shift={(-0.34*#3,-0.18*#3)}]#1,#2) circle (0.5pt);
      \fill[white, opacity=0.28]
        ([shift={(-0.30*#3,0.34*#3)}]#1,#2) circle ({0.26*#3});
    \end{scope}}
  \pepp{ 0.00}{ 1.74}{0.165}
  \pepp{ 1.51}{ 0.84}{0.150}
  \pepp{ 1.51}{-0.84}{0.150}
  \pepp{ 0.00}{-1.74}{0.165}
  \pepp{-1.51}{-0.84}{0.150}
  \pepp{-1.51}{ 0.84}{0.150}

  \newcommand{\basil}[3]{%
    \begin{scope}[shift={(#1,#2)}, rotate=#3, scale=0.44]
      \fill[basilg]
        (0,0) .. controls (0.55,0.30) and (0.55,0.70) .. (0,1)
              .. controls (-0.55,0.70) and (-0.55,0.30) .. (0,0);
      \draw[basildk, line width=0.5pt] (0,0.04) -- (0,0.96);
      \draw[basilvein, line width=0.35pt] (0,0.32) -- ( 0.26,0.50);
      \draw[basilvein, line width=0.35pt] (0,0.32) -- (-0.26,0.50);
      \draw[basilvein, line width=0.35pt] (0,0.58) -- ( 0.22,0.74);
      \draw[basilvein, line width=0.35pt] (0,0.58) -- (-0.22,0.74);
      \draw[basildk, line width=0.4pt]
        (0,0) .. controls (0.55,0.30) and (0.55,0.70) .. (0,1)
              .. controls (-0.55,0.70) and (-0.55,0.30) .. (0,0);
    \end{scope}}
  \basil{ 0.78}{ 0.96}{ 28}
  \basil{-0.86}{ 0.10}{-58}
  \basil{ 0.30}{-0.92}{200}
  \basil{-0.20}{ 1.02}{-18}

  \fill[seedcol] (V1) circle (1.0pt);
  \fill[seedcol] (V2) circle (1.0pt);
  \fill[seedcol] (V3) circle (1.0pt);

  \draw[crustdark!88!black, line width=0.9pt] (0,0) circle (\R);

  \tikzset{wlabel/.style={
    draw=crustdark!55!black, line width=0.4pt,
    fill=platecol, fill opacity=0.92, text opacity=1,
    inner sep=2.6pt, rounded corners=2.5pt,
    font=\normalsize\bfseries, text=wordcol
  }}
  \node[wlabel] at ( 0   , 1.30)  {$\ttb\ttb\ttb$};
  \node[wlabel] at ( 1.06, 0.56)  {$\ttb\ttb\tta$};
  \node[wlabel] at ( 1.20,-0.66)  {$\ttb\tta\ttb$};
  \node[wlabel] at ( 0   ,-1.16)  {$\tta\ttb\ttb$};
  \node[wlabel] at (-1.20,-0.66)  {$\ttb\tta\tta$};
  \node[wlabel] at (-1.06, 0.56)  {$\tta\tta\ttb$};
  \node[wlabel, font=\scriptsize\bfseries, inner sep=1.6pt]
                  at ( 0   , 0   ) {$\tta\tta\tta$};
\end{tikzpicture}

\medskip
\captionof{figure}{Lazy-caterer dissection for $n=3$.}
\label{fig:pizza-d2}
\end{minipage}

\end{figure}

\section{Class sizes and fixed-cardinality distributions}\label{sec:class-size}

Having counted the equivalence classes in the previous sections, we now study
their sizes.  Using the Young-diagram encoding, we express the cardinality of
each \(\approx_s\)-class through a Gaussian binomial coefficient; specialising
yields the full class-size distribution for \(\sim_2\) and stabilisation results
for the number of classes of any fixed cardinality.

\subsection{General class-size formula for \texorpdfstring{\(s\)}{s}-quasi-binomial equivalence}
\label{subsec:general-class-size-equiv-s}

We compute the cardinalities of the \(\approx_s\)-classes by fixing the recorded
\(s\)-tail and counting the remaining partition inside a rectangle.

We use the Gaussian binomial coefficient
\[
\binom{c}{d}_\qv
=\prod_{i=1}^{d}\frac{1-\qv^{c-d+i}}{1-\qv^{i}},
\]
defined to be \(0\) for \(d<0\) or \(d>c\).  It is the area-generating
polynomial for partitions contained in a \(d\times(c-d)\) rectangle,
\[
\binom{c}{d}_\qv=\sum_{\nu\subseteq (c-d)^{d}} \qv^{|\nu|}.
\]

Recall from \cref{def:s-tail,def:s-residual-area} the quantities
\(\ell=\min(s,|\wrd|_{\tta})\), the last tail entry \(\theta\), and the residual
area \(R_s(\wrd)\) attached to a word \(\wrd\in\{\tta,\ttb\}^n\).

\begin{theorem}
\label{thm:general-class-size-equiv-s}
Let \(\wrd\in\{\tta,\ttb\}^n\).  Then the cardinality \(\#[\wrd]_s\) of the \(\approx_s\)-class of
\(\wrd\) is the coefficient
\[
[\qv^{\,R_s(\wrd)}]\binom{(|\wrd|_{\tta}-\ell)+(|\wrd|_{\ttb}-\theta)}{|\wrd|_{\tta}-\ell}_{\!\qv}.
\]
\end{theorem}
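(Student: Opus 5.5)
The plan is to exploit the bijection \(\wrd\mapsto\lambda(\wrd)\) of \cref{subsec:young} between words of length \(n\) with \(|\wrd|_\tta=j\) and partitions in the \(j\times(n-j)\) rectangle, and to transport the class \([\wrd]_s\) to a set of partitions. Fix \(\wrd\), put \(j=|\wrd|_\tta\), and let \(\ell\), \(\theta\), \(\mathcal{T}_s(\wrd)\) be as in \cref{def:s-tail}. A word \(\wrd'\) lies in \([\wrd]_s\) iff \(|\wrd'|_\tta=j\), \(|\lambda(\wrd')|=|\lambda(\wrd)|\), and the last \(\ell\) parts of \(\lambda(\wrd')\) equal those of \(\lambda(\wrd)\). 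Since \(\ell=\min(s,j)\) depends only on \(j\), the last condition is well defined on the class.

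Next, decompose a partition \(\lambda'=(r'_1,\ldots,r'_j)\) with the prescribed tail \((r'_{j-\ell+1},\ldots,r'_j)=(r_{j-\ell+1},\ldots,r_j)\). Monotonicity of the full sequence is then equivalent to
\[
n-j\ge r'_1\ge\cdots\ge r'_{j-\ell}\ge\theta .
\]
Subtracting \(\theta\) from each of the first \(j-\ell\) parts gives a bijection with partitions \(\nu\) contained in the \((j-\ell)\times(n-j-\theta)\) rectangle, as in \cref{fig:residual-area}. The area condition \(|\lambda'|=|\lambda(\wrd)|\) becomes
\[
|\nu|=\binom{\wrd}{\tta\ttb}-(j-\ell)\theta-|\mathcal{T}_s(\wrd)|=R_s(\wrd),
\]
by \cref{def:s-residual-area}. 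Hence \(\#[\wrd]_s\) equals the number of partitions of \(R_s(\wrd)\) fitting in a \((j-\ell)\times(|\wrd|_\ttb-\theta)\) box. By the area-generating identity for \(\binom{c}{d}_\qv\) with \(d=j-\ell\) and \(c-d=|\wrd|_\ttb-\theta\), this number is the stated coefficient.

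The remaining work is bookkeeping for degenerate cases. When \(\ell=0\) there is no tail, so \(\theta=0\) and no lower-bound constraint applies, and the formula reduces to the \(\sim_2\) count. When \(j\le s\), we have \(j-\ell=0\), the Gaussian coefficient is \(1\), and \(R_s(\wrd)=0\), so the class is a singleton. These cases need only a check that the conventions \(\theta=0\) and \(|\mathcal{T}_s|=0\) make the displayed formula agree.

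The main point to verify carefully is that the tail's last entry \(\theta\) acts as a lower bound on all unrecorded rows and nothing else. This means checking that no extra constraint links \(\nu\) to the tail beyond \(r'_{j-\ell}\ge\theta\), and that the upper bound \(n-j\) is unaffected. Both follow directly from the definition of a partition.
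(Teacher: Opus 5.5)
Your proposal is correct and follows essentially the same route as the paper's proof. You fix the height and the \(s\)-tail, note that the unrecorded rows are constrained exactly by \(|\wrd|_{\ttb}\ge r_1\ge\cdots\ge r_{|\wrd|_{\tta}-\ell}\ge\theta\), shift them down by \(\theta\) to get partitions of area \(R_s(\wrd)\) in the \((|\wrd|_{\tta}-\ell)\times(|\wrd|_{\ttb}-\theta)\) box, and read off the count from the area-generating identity for the Gaussian binomial; your explicit check of the degenerate cases \(\ell=0\) and \(|\wrd|_{\tta}\le s\) is a small addition the paper leaves implicit.
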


\begin{proof}
The words \(\wu\in[\wrd]_s\) are those whose Young diagram
\(\lambda(\wu)=(r_1,\ldots,r_{|\wrd|_{\tta}})\) has area
\(\binom{\wrd}{\tta\ttb}\) and satisfies \(\mathcal{T}_s(\wu)=\mathcal{T}_s(\wrd)\);
the free parts then satisfy
\(|\wrd|_{\ttb}\geq r_1\geq\cdots\geq r_{|\wrd|_{\tta}-\ell}\geq\theta\).  Setting
\(\nu_i:=r_i-\theta\) for \(1\leq i\leq|\wrd|_{\tta}-\ell\) gives a partition
\(\nu\subseteq(|\wrd|_{\ttb}-\theta)^{\,|\wrd|_{\tta}-\ell}\) of area
\[
|\nu|=\binom{\wrd}{\tta\ttb}-(|\wrd|_{\tta}-\ell)\theta-|\mathcal{T}_s(\wrd)|
=R_s(\wrd).
\]
Conversely, every such \(\nu\) gives free parts \(r_i=\nu_i+\theta\) which,
together with the fixed tail \(\mathcal{T}_s(\wrd)\), reconstruct \(\wu\).  Hence
\(\#[\wrd]_s\) equals the number of such \(\nu\), the stated coefficient; see
\cref{ex:class-size-count}.
\end{proof}

\begin{example}
\label{ex:class-size-count}
\definecolor{raRes}{HTML}{F6BC8C}    
\definecolor{raEdge}{HTML}{3E2E38}   
\definecolor{raResB}{HTML}{C0492A}   
\definecolor{raFrame}{HTML}{E36A48}  
\definecolor{raBox}{HTML}{FAEAD8}    
Take \(s=1\) and \(\wrd=\ttb\tta\ttb\ttb\tta\ttb\tta\ttb\); then \(\ell=\theta=1\)
and \(R_1(\wrd)=4\), so \cref{thm:general-class-size-equiv-s} gives
\(\#[\wrd]_1=3\).

\begin{center}
\[
\binom{6}{2}_{\!\qv}=1+\qv+2\qv^{2}+2\qv^{3}
   +\textcolor{raResB}{\mathbf{3}}\,\qv^{4}
   +2\qv^{5}+2\qv^{6}+\qv^{7}+\qv^{8}.
\]
\[
\#[\wrd]_{1}=\bigl[\qv^{4}\bigr]\binom{6}{2}_{\!\qv}=\textcolor{raResB}{\mathbf{3}}.
\]

\medskip
\begin{tikzpicture}[x=0.5cm, y=0.5cm,
   ce/.style={draw=raEdge, line width=0.5pt},
   bx/.style={draw=raFrame, line width=1pt, dash pattern=on 3pt off 2pt},
   pl/.style={font=\small, text=raEdge},
   wl/.style={font=\footnotesize, text=raEdge}]
\foreach \x in {0, 5.9, 11.8}
  \fill[raBox] (\x,1) rectangle (\x+4,-1);
\fill[raRes] (0,0) rectangle (4,1);
\foreach \j in {1,...,4} \draw[ce] (\j-1,0) rectangle (\j,1);
\node[wl] at (2,-1.75) {$\tta\ttb\ttb\ttb\ttb\tta\tta\ttb$};
\node[pl] at (2,1.75) {$\nu=(4)$};
\fill[raRes] (5.9,0) rectangle (8.9,1);
\fill[raRes] (5.9,-1) rectangle (6.9,0);
\foreach \j in {1,2,3} \draw[ce] (5.9+\j-1,0) rectangle (5.9+\j,1);
\draw[ce] (5.9,-1) rectangle (6.9,0);
\node[wl] at (7.9,-1.75) {$\ttb\tta\ttb\ttb\tta\ttb\tta\ttb$};
\node[pl] at (7.9,1.75) {$\nu=(3,1)$};
\fill[raRes] (11.8,0) rectangle (13.8,1);
\fill[raRes] (11.8,-1) rectangle (13.8,0);
\foreach \j in {1,2} \draw[ce] (11.8+\j-1,0) rectangle (11.8+\j,1);
\foreach \j in {1,2} \draw[ce] (11.8+\j-1,-1) rectangle (11.8+\j,0);
\node[wl] at (13.8,-1.75) {$\ttb\ttb\tta\tta\ttb\ttb\tta\ttb$};
\node[pl] at (13.8,1.75) {$\nu=(2,2)$};
\foreach \x in {0, 5.9, 11.8}
  \draw[bx] (\x,1) rectangle (\x+4,-1);
\end{tikzpicture}

\medskip
\captionof{figure}{The class \([\wrd]_1\) has exactly three elements, the three
words shown.}
\label{fig:class-size-three}
\end{center}
\end{example}

\begin{corollary}
\label{cor:full-class-size-distribution-equiv-s}
The class-size distribution polynomial for \(\approx_s\) on \(\{\tta,\ttb\}^n\) is
\[
\sum_{[\wrd]_s\in\{\tta,\ttb\}^n/\approx_s}
z^{\,[\qv^{R_s(\wrd)}]\binom{(|\wrd|_{\tta}-\ell)+(|\wrd|_{\ttb}-\theta)}{|\wrd|_{\tta}-\ell}_{\!\qv}}.
\]
\end{corollary}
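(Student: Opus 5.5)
The corollary follows almost immediately from \cref{thm:general-class-size-equiv-s}. I would first make precise what the class-size distribution polynomial means: it is \(\sum_{[\wrd]_s} z^{\#[\wrd]_s}\), so the coefficient of \(z^K\) counts the \(\approx_s\)-classes of cardinality \(K\). The plan has two steps. First, check that the exponent in the displayed sum is well defined on classes. Second, substitute the class-size formula term by term.

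For well-definedness, I need the quantities \(|\wrd|_{\tta}\), \(|\wrd|_{\ttb}\), \(\ell\), \(\theta\) and \(R_s(\wrd)\) to depend only on \([\wrd]_s\). If \(\wu\approx_s\wrd\), then \(\wu\sim_2\wrd\). This gives \(|\wu|_{\tta}=|\wrd|_{\tta}\), and hence \(|\wu|_{\ttb}=|\wrd|_{\ttb}\) at fixed length \(n\), together with \(\binom{\wu}{\tta\ttb}=\binom{\wrd}{\tta\ttb}\). It follows that \(\ell=\min(s,|\wrd|_{\tta})\) is shared. The relation also gives \(\mathcal{T}_s(\wu)=\mathcal{T}_s(\wrd)\), so \(\theta\) and \(|\mathcal{T}_s|\) are shared. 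By \cref{def:s-residual-area}, \(R_s(\wu)=R_s(\wrd)\). Therefore the Gaussian coefficient appearing in the exponent is an invariant of the class, and the choice of representative does not matter.

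Next I apply \cref{thm:general-class-size-equiv-s} to each class: the exponent equals \(\#[\wrd]_s\). Summing \(z^{\#[\wrd]_s}\) over the quotient gives exactly the displayed polynomial. As a sanity check, evaluating at \(z=1\) should recover \(\sum_{r=0}^{s+3}\binom{n}{r}\) from \cref{thm:higher-tail-schlaefli-dictionary}. Differentiating and then setting \(z=1\) should recover \(2^n\), since the classes partition \(\{\tta,\ttb\}^n\). I expect no real obstacle here. The only point needing care is the well-definedness step, in particular the degenerate case \(|\wrd|_{\tta}<s\), where \(\ell=|\wrd|_{\tta}\) and the Gaussian binomial is \(\binom{c}{0}_\qv=1\). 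In that case \(R_s(\wrd)=0\) and every class is a singleton, which is consistent with the formula.
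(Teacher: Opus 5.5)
Your proposal is correct and follows the paper's own proof: show that \(|\wrd|_{\tta}\), \(|\wrd|_{\ttb}\), the \(s\)-tail (and hence \(\ell\) and \(\theta\)) and \(R_s(\wrd)\) are constant on each \(\approx_s\)-class, then substitute \cref{thm:general-class-size-equiv-s} term by term. One small point: the degenerate case you flag, with \(R_s(\wrd)=0\) and a trivial Gaussian binomial, covers all \(|\wrd|_{\tta}\le s\), not only \(|\wrd|_{\tta}<s\).
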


\begin{proof}
The summand is independent of the representative \(\wrd\), since \(|\wrd|_{\tta}\),
\(|\wrd|_{\ttb}\), \(\mathcal{T}_s(\wrd)\), and \(R_s(\wrd)\) are fixed on an
\(\approx_s\)-class.  The formula follows from \cref{thm:general-class-size-equiv-s}.
\end{proof}

We record the first two special cases.

\begin{corollary}
\label{cor:class-size-s0}
Let \(\wrd\in\{\tta,\ttb\}^n\).  Then we have:
\begin{enumerate}[label=\textup{(\roman*)}]
\item for \(s=0\),
\[
\#[\wrd]_0=[\qv^{\,\binom{\wrd}{\tta\ttb}}]\binom{n}{|\wrd|_{\tta}}_{\!\qv};
\]
\item for \(s=1\) with \(|\wrd|_{\tta}>0\),
\[
\#[\wrd]_1=[\qv^{\,\binom{\wrd}{\tta\ttb}-|\wrd|_{\tta}\theta}]\binom{n-\theta-1}{|\wrd|_{\tta}-1}_{\!\qv}.
\]
\end{enumerate}
\end{corollary}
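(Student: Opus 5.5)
Both parts are direct specialisations of \cref{thm:general-class-size-equiv-s}. The plan is to substitute the values of \(\ell\), \(\theta\) and \(R_s(\wrd)\) for \(s=0\) and \(s=1\) into the Gaussian binomial coefficient there. No new counting is needed; the only work is checking that the parameters collapse as claimed, including in degenerate cases.

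For (i), take \(s=0\). Then \(\ell=\min(0,|\wrd|_{\tta})=0\), and the \(0\)-tail is empty, so \(\theta=0\) and \(|\mathcal{T}_0(\wrd)|=0\) by the conventions after \cref{def:s-tail}. \cref{def:s-residual-area} gives \(R_0(\wrd)=\binom{\wrd}{\tta\ttb}\). The Gaussian binomial becomes
\[
\binom{|\wrd|_{\tta}+|\wrd|_{\ttb}}{|\wrd|_{\tta}}_{\!\qv}=\binom{n}{|\wrd|_{\tta}}_{\!\qv},
\]
since \(|\wrd|_{\tta}+|\wrd|_{\ttb}=n\). This is the formula for all \(\wrd\), including \(|\wrd|_{\tta}=0\), where both sides equal \(1\).

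For (ii), take \(s=1\) and \(|\wrd|_{\tta}>0\). Then \(\ell=1\), and \(\mathcal{T}_1(\wrd)=(r_{|\wrd|_{\tta}})\), so \(\theta=r_{|\wrd|_{\tta}}=|\mathcal{T}_1(\wrd)|\). Hence
\[
R_1(\wrd)=\binom{\wrd}{\tta\ttb}-(|\wrd|_{\tta}-1)\theta-\theta=\binom{\wrd}{\tta\ttb}-|\wrd|_{\tta}\theta.
\]
The top entry of the Gaussian binomial is \((|\wrd|_{\tta}-1)+(|\wrd|_{\ttb}-\theta)=n-\theta-1\), and the bottom entry is \(|\wrd|_{\tta}-1\). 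Substituting these gives the stated coefficient.

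The main point needing care is the boundary behaviour. When \(|\wrd|_{\tta}=1\), the binomial \(\binom{n-\theta-1}{0}_{\qv}=1\). In that case \(R_1(\wrd)=0\), because \(\binom{\wrd}{\tta\ttb}=\theta\), so the class is a singleton, as it should be. The hypothesis \(|\wrd|_{\tta}>0\) is used only to ensure \(\ell=1\) and that \(\theta\) is defined as an actual tail entry. Otherwise the argument is bookkeeping of \cref{def:s-tail,def:s-residual-area}.
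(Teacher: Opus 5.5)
Your proposal is correct and is the paper's argument: specialise \cref{thm:general-class-size-equiv-s} with \(\ell=\theta=0\) and \(R_0(\wrd)=\binom{\wrd}{\tta\ttb}\) for \(s=0\), and with \(\ell=1\) and \(R_1(\wrd)=\binom{\wrd}{\tta\ttb}-|\wrd|_{\tta}\theta\) for \(s=1\). Your explicit checks of the Gaussian-binomial parameters and of the boundary cases \(|\wrd|_{\tta}=0\) and \(|\wrd|_{\tta}=1\) are accurate, and add detail the paper leaves implicit.
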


\begin{proof}
Specialise \cref{thm:general-class-size-equiv-s}.  For \(s=0\), \(\ell=\theta=0\)
and \(R_0(\wrd)=\binom{\wrd}{\tta\ttb}\), giving (i); for \(s=1\) with
\(|\wrd|_{\tta}>0\), \(\ell=1\) and
\(R_1(\wrd)=\binom{\wrd}{\tta\ttb}-|\wrd|_{\tta}\theta\), giving (ii).
\end{proof}

Complementation in the \(d\times(c-d)\) box exchanges areas \(k\) and \(d(c-d)-k\), so
\(\binom{c}{d}_\qv\) is \emph{palindromic} of degree \(d(c-d)\):
\[
[\qv^{k}]\binom{c}{d}_\qv=[\qv^{\,d(c-d)-k}]\binom{c}{d}_\qv.
\]

\begin{corollary}
\label{cor:fixed-tail-symmetry}
Let \(\wu,\wrd\in\{\tta,\ttb\}^n\) such that \(|\wu|_{\tta}=|\wrd|_{\tta}\),
\(\mathcal{T}_s(\wu)=\mathcal{T}_s(\wrd)\), and
\(R_s(\wu)+R_s(\wrd)=(|\wrd|_{\tta}-\ell)(|\wrd|_{\ttb}-\theta)\).  Then
\(\#[\wu]_s=\#[\wrd]_s\).
\end{corollary}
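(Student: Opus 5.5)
The corollary follows from \cref{thm:general-class-size-equiv-s} together with the palindromicity of the Gaussian binomial coefficient recorded just above. The plan is to check that \(\wu\) and \(\wrd\) give the \emph{same} Gaussian binomial coefficient, and that their residual areas are complementary exponents in it.

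First, the parameters agree. Since both words lie in \(\{\tta,\ttb\}^n\) and \(|\wu|_{\tta}=|\wrd|_{\tta}\), we also have \(|\wu|_{\ttb}=|\wrd|_{\ttb}\). The integer \(\ell=\min(s,|\cdot|_{\tta})\) depends only on \(s\) and the number of \(\tta\)'s, so it is the same for both words. The last tail entry \(\theta\) is read off the \(s\)-tail, and \(\mathcal{T}_s(\wu)=\mathcal{T}_s(\wrd)\), so \(\theta\) is the same too; if \(\ell=0\), both values are \(0\) by convention. Set
\[
d\defeq|\wrd|_{\tta}-\ell,\qquad c\defeq d+(|\wrd|_{\ttb}-\theta).
\]
Then \cref{thm:general-class-size-equiv-s} gives
\[
\#[\wu]_s=[\qv^{R_s(\wu)}]\binom{c}{d}_{\!\qv}
\quad\text{and}\quad
\#[\wrd]_s=[\qv^{R_s(\wrd)}]\binom{c}{d}_{\!\qv},
\]
with the same polynomial \(\binom{c}{d}_\qv\) in both.

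Second, the exponents are complementary. The hypothesis reads \(R_s(\wu)=d(c-d)-R_s(\wrd)\). Applying palindromicity,
\[
[\qv^{k}]\binom{c}{d}_\qv=[\qv^{\,d(c-d)-k}]\binom{c}{d}_\qv,
\]
with \(k=R_s(\wrd)\) gives \(\#[\wu]_s=\#[\wrd]_s\). \Cref{lem:residual-area-bounds} places both exponents in \(\intv{0}{d(c-d)}\), although palindromicity holds for every integer \(k\) anyway, since both sides vanish outside this range.

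There is no real obstacle; the only point needing care is that \(\theta\) and \(\ell\) genuinely coincide for the two words. This is exactly what the equal-tail and equal-\(\tta\)-count hypotheses provide, including the degenerate case \(\ell=0\).
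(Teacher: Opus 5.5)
Your proof is correct and follows the paper's own argument: \cref{thm:general-class-size-equiv-s} reads both class sizes off the same Gaussian binomial, and palindromicity matches the complementary exponents \(R_s(\wu)\) and \(R_s(\wrd)\). Your explicit check that \(\ell\), \(\theta\) and \(|\wrd|_{\ttb}\), and hence the residual rectangle, coincide for the two words (including the case \(\ell=0\)) spells out a step the paper leaves implicit.
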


\begin{proof}
Since \(|\wu|_{\tta}=|\wrd|_{\tta}\) and \(\mathcal{T}_s(\wu)=\mathcal{T}_s(\wrd)\),
\cref{thm:general-class-size-equiv-s} expresses \(\#[\wrd]_s\) and \(\#[\wu]_s\)
as the coefficients of \(\qv^{R_s(\wrd)}\) and \(\qv^{R_s(\wu)}\) in the same
Gaussian binomial, of degree \((|\wrd|_{\tta}-\ell)(|\wrd|_{\ttb}-\theta)\).  By
hypothesis the two exponents sum to this degree, so palindromicity gives
\(\#[\wu]_s=\#[\wrd]_s\).
\end{proof}

\subsection{Class-size formula for \texorpdfstring{\(2\)}{2}-binomial equivalence}
\label{subsec:sim2-class-size-formula}

The cardinalities of \(2\)-binomial equivalence classes were studied by
Lejeune et al.~\cite{LejeuneRigoRosenfeld2020}; for \(k\)-abelian equivalence,
see Karhum\"{a}ki et al.~\cite{KPRW17}.  In the binary fixed-length setting, the
Young-diagram encoding yields a direct coefficient formula, since on
\(\{\tta,\ttb\}^n\) the relation \(\sim_2\) coincides with \(\approx_0\).

\begin{corollary}
\label{thm:sim2-class-size}
Let \(\wrd\in\{\tta,\ttb\}^n\).  Then the cardinality \(\#[\wrd]_{\sim_2}\) of the \(\sim_2\)-class of
\(\wrd\) is the coefficient
\[
[\qv^{\binom{\wrd}{\tta\ttb}}]\binom{n}{|\wrd|_{\tta}}_{\!\qv}.
\]
\end{corollary}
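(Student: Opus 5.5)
This statement is the case \(s=0\) of \cref{thm:general-class-size-equiv-s}, transported from \(\approx_0\) to \(\sim_2\). I would proceed in two steps.

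First, I invoke \cref{prop:equiv-s-chain}, which gives \(\approx_0=\sim_2\) on \(\{\tta,\ttb\}^n\). Consequently the \(\sim_2\)-class of \(\wrd\) coincides with \([\wrd]_0\), and \(\#[\wrd]_{\sim_2}=\#[\wrd]_0\).

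Second, I apply \cref{cor:class-size-s0}(i), or equivalently \cref{thm:general-class-size-equiv-s} with \(s=0\). Then \(\ell=\min(0,|\wrd|_\tta)=0\) and the \(0\)-tail is empty, so \(\theta=0\) and \(|\mathcal{T}_0(\wrd)|=0\). Hence \(R_0(\wrd)=\binom{\wrd}{\tta\ttb}\), and the Gaussian binomial becomes
\[
\binom{|\wrd|_\tta+|\wrd|_\ttb}{|\wrd|_\tta}_{\!\qv}=\binom{n}{|\wrd|_\tta}_{\!\qv},
\]
since \(|\wrd|_\tta+|\wrd|_\ttb=n\). This yields exactly the stated coefficient.

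There is no genuine obstacle here. The only point needing care is the degenerate case \(|\wrd|_\tta=0\), where \(\lambda(\wrd)\) is empty and the conventions for \(\theta\) and the tail must be checked. In that case the formula gives \([\qv^0]\binom{n}{0}_\qv=1\), and this matches the singleton class \(\{\ttb^n\}\).

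Alternatively, one can give a direct proof. By \cref{prop:complete-invariant} and the bijection \(\lambda\leftrightarrow W\) of \cref{subsec:young}, the class of \(\wrd\) corresponds to the partitions with \(|\wrd|_\tta\) parts (zeros allowed) inside the \(|\wrd|_\tta\times|\wrd|_\ttb\) rectangle of area \(\binom{\wrd}{\tta\ttb}\). The area-generating interpretation of \(\binom{n}{|\wrd|_\tta}_\qv\) then counts these partitions.
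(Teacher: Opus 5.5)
Your proposal is correct and follows the paper's route: the paper's proof is exactly that \(\sim_2\) coincides with \(\approx_0\) (\cref{prop:equiv-s-chain}), so the statement is \cref{cor:class-size-s0}\,(i), i.e.\ \cref{thm:general-class-size-equiv-s} at \(s=0\) with \(\ell=\theta=0\), \(R_0(\wrd)=\binom{\wrd}{\tta\ttb}\) and \(|\wrd|_{\tta}+|\wrd|_{\ttb}=n\). Your alternative direct argument, via the Young-diagram bijection and the area-generating interpretation of the Gaussian binomial, is just the \(s=0\) case of the paper's proof of \cref{thm:general-class-size-equiv-s} written out.
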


\begin{proof}
Since \(\sim_2\) coincides with \(\approx_0\), this is \cref{cor:class-size-s0}\,(i).
\end{proof}

\begin{corollary}
\label{cor:sim2-class-size-distribution}
The class-size distribution polynomial for \(\sim_2\) on \(\{\tta,\ttb\}^n\) is
\[
\sum_{[\wrd]_{\sim_2}\in\{\tta,\ttb\}^n/{\sim_2}}
z^{\,[\qv^{\binom{\wrd}{\tta\ttb}}]\binom{n}{|\wrd|_{\tta}}_{\!\qv}}.
\]
\end{corollary}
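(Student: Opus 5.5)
The plan is to derive this as the \(s=0\) case of \cref{cor:full-class-size-distribution-equiv-s}, or equivalently to build it directly from \cref{thm:sim2-class-size}. By definition, the class-size distribution polynomial is \(\sum_{K} N_K z^K\), where \(N_K\) is the number of classes of cardinality \(K\). This is the same as summing \(z^{\#[\wrd]_{\sim_2}}\) over the set of classes. So the only thing to prove is that each summand in the displayed formula equals \(z^{\#[\wrd]_{\sim_2}}\), and that the summand is well defined on classes.

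The key steps are as follows.

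\begin{enumerate}
\item \textbf{Well-definedness.} The exponent depends only on the pair \(\bigl(|\wrd|_{\tta},\binom{\wrd}{\tta\ttb}\bigr)\). By \cref{prop:complete-invariant} this pair is constant on each \(\sim_2\)-class, indeed a complete invariant, so the summand does not depend on the chosen representative.
\item \textbf{Identifying the exponent.} By \cref{thm:sim2-class-size}, the exponent \([\qv^{\binom{\wrd}{\tta\ttb}}]\binom{n}{|\wrd|_{\tta}}_{\!\qv}\) equals \(\#[\wrd]_{\sim_2}\).
\item \textbf{Grouping terms.} Collecting terms with equal exponent turns the sum into \(\sum_K N_K z^K\).
\end{enumerate}

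Alternatively, the whole statement is the specialisation \(s=0\) of \cref{cor:full-class-size-distribution-equiv-s}. There \(\ell=\theta=0\) and \(R_0(\wrd)=\binom{\wrd}{\tta\ttb}\), so the Gaussian binomial becomes \(\binom{|\wrd|_{\tta}+|\wrd|_{\ttb}}{|\wrd|_{\tta}}_{\qv}=\binom{n}{|\wrd|_{\tta}}_{\qv}\). Combined with \(\approx_0=\sim_2\) from \cref{prop:equiv-s-chain}, this gives the claim.

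If a more explicit form is wanted, \cref{prop:complete-invariant} lets the sum be reindexed over pairs \((j,\kappa)\) with \(0\le j\le n\) and \(0\le\kappa\le j(n-j)\). Each pair corresponds to exactly one class, so the polynomial becomes \(\sum_{j}\sum_{\kappa} z^{[\qv^\kappa]\binom{n}{j}_\qv}\).

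No step is a genuine obstacle. The only point requiring care is the well-definedness of the summand on classes, and the complete-invariant proposition handles it directly.
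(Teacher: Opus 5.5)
Your proposal is correct and follows the paper's proof: the summand is well defined because \(|\wrd|_{\tta}\) and \(\binom{\wrd}{\tta\ttb}\) are constant on \(\sim_2\)-classes, and \cref{thm:sim2-class-size} identifies each exponent with the class size. Your alternative route via the \(s=0\) case of \cref{cor:full-class-size-distribution-equiv-s} is also valid, since there \(\ell=\theta=0\) and \(R_0(\wrd)=\binom{\wrd}{\tta\ttb}\).
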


\begin{proof}
The summand is independent of the representative \(\wrd\), since \(|\wrd|_{\tta}\)
and \(\binom{\wrd}{\tta\ttb}\) are fixed on a \(\sim_2\)-class.  The formula follows
from \cref{thm:sim2-class-size}.
\end{proof}

Write \(p\) for the partition function and \(p_{\leq r}\) for the number of
partitions with at most \(r\) parts.  Fix \(K\geq 2\), and set
\[
\rho_K:=\min\{\xi:p(\xi)>K\},\qquad
\varepsilon_K:=
\begin{cases}1,&\text{if \(K\) is a value of \(p\),}\\0,&\text{otherwise.}\end{cases}
\]
For \(2\leq r<\rho_K\), set
\[
c_K(r):=\#\{\xi:p_{\leq r}(\xi)=K\},\qquad
\tau_K(r):=\max\{\xi:p_{\leq r}(\xi)=K\},
\]
the latter being \(-\infty\) when no such \(\xi\) exists.  All these quantities are
independent of \(n\).

\begin{example}
Take \(K=4\).  Then \(\rho_4=4\) (since \(p(3)=3\) and \(p(4)=5\)) and
\(\varepsilon_4=0\) (since \(4\) is not a value of \(p\)).  For \(r=2\),
\(p_{\leq 2}(\xi)=4\) iff \(\xi\in\{6,7\}\), so \(c_4(2)=2\) and \(\tau_4(2)=7\);
for \(r=3\), \(p_{\leq 3}(\xi)=4\) iff \(\xi=4\), so \(c_4(3)=1\) and
\(\tau_4(3)=4\).
\end{example}

For \(r,c\in\N\), set
\[
A_{r,c}(u):=[\qv^{u}]\binom{r+c}{r}_{\qv}.
\]
Equivalently, \(A_{r,c}(u)\) is the number of partitions of \(u\) contained in an
\(r\times c\) rectangle.

\begin{example}
Take \(r=2\) and \(c=3\).  Then
\[
\binom{5}{2}_{\qv}=1+\qv+2\qv^{2}+2\qv^{3}+2\qv^{4}+\qv^{5}+\qv^{6},
\]
so \(A_{2,3}(u)=1,1,2,2,2,1,1\) for \(u=0,\dots,6\); for instance \(A_{2,3}(3)=2\)
counts the partitions \((3)\) and \((2,1)\) in the \(2\times 3\) rectangle.
\end{example}

The following classical properties of \(A_{r,c}\) are used in the proof of
\cref{thm:size-stabilisation-sim2}; see, e.g., \cite{And98} or \cite{Sta12},
the unimodality being originally due to Sylvester.

\begin{lemma}
\label{lem:gaussian-properties}
The function \(A_{r,c}\) has the following properties.
\begin{enumerate}[label=\textup{(\roman*)}]
\item \emph{(Symmetry)} \(A_{r,c}(u)=A_{r,c}(rc-u)\).
\item \emph{(Unimodality)} \(u\mapsto A_{r,c}(u)\) is unimodal on
\(\intv{0}{rc}\).
\item \emph{(Boundary stabilisation)} If \(c\geq u\), then
\(A_{r,c}(u)=p_{\leq r}(u)\); in particular, \(A_{r,c}(u)=p(u)\) when
\(r,c\geq u\).
\end{enumerate}
\end{lemma}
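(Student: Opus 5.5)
I will prove the three properties directly from the interpretation of \(A_{r,c}(u)\) as the number of partitions of \(u\) contained in an \(r\times c\) rectangle, that is, partitions \(\nu=(\nu_1\geq\cdots\geq\nu_r\geq 0)\) with \(\nu_1\leq c\). This interpretation was recalled above, together with the product formula for \(\binom{r+c}{r}_\qv\).

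For \emph{symmetry} (i), I will use complementation in the box. To \(\nu\subseteq c^r\) associate
\[
\nu^{\mathrm c}:=(c-\nu_r,\,c-\nu_{r-1},\,\ldots,\,c-\nu_1).
\]
This is again a weakly decreasing sequence with entries in \(\intv{0}{c}\), so it is a partition in the same box. Its area is \(rc-|\nu|\), and the map is an involution. It is therefore a bijection between partitions of \(u\) and partitions of \(rc-u\) in the box, which gives (i). This is the palindromicity already noted before \cref{cor:fixed-tail-symmetry}.

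For \emph{boundary stabilisation} (iii), suppose \(c\geq u\). A partition of \(u\) has largest part at most \(u\leq c\), so the column constraint is vacuous, and the only remaining constraint is at most \(r\) parts. Hence \(A_{r,c}(u)=p_{\leq r}(u)\). If moreover \(r\geq u\), a partition of \(u\) has at most \(u\leq r\) parts, so \(p_{\leq r}(u)=p(u)\).

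\emph{Unimodality} (ii) is the real content and the main obstacle, since no elementary injection is evident. I would not reprove it from scratch but cite Sylvester's theorem, as the paper already signals. If a self-contained argument were wanted, I would take the \(\mathfrak{sl}_2\) route. Let \(V\) be the span of the partitions in the \(r\times c\) box. Define the raising operator \(E\) as the weighted sum of adding a cell and the lowering operator \(F\) as the weighted sum of removing one. Choose the weights, as in Proctor's construction, so that \([E,F]=H\), where \(H\) acts on the area-\(u\) part by the scalar \(2u-rc\). Standard \(\mathfrak{sl}_2\) theory then shows that \(E\) is injective from area \(u\) to area \(u+1\) whenever \(u<rc/2\). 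This yields \(A_{r,c}(u)\leq A_{r,c}(u+1)\) on the lower half, and symmetry (i) gives the decrease on the upper half. The delicate step is finding weights that make the commutator relation hold; I would rely on Proctor's explicit formulas rather than derive them.
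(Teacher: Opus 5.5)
Your proposal is correct and matches the paper's treatment. The paper gives no proof of its own: it cites Andrews and Stanley for these classical facts and attributes unimodality to Sylvester, and its symmetry claim rests on the same box-complementation it describes just before the fixed-tail symmetry corollary. Your direct arguments for (i) and (iii) are sound, and deferring (ii) to Sylvester's theorem is the same reliance the paper accepts. Your $\mathrm{sl}_2$ sketch via Proctor, including the injectivity-below-the-middle step, is also stated correctly.
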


For \(K\geq 1\), let \(N^{\sim_2}_K(n)\) be the number of \(\sim_2\)-classes in
\(\{\tta,\ttb\}^n\) of size \(K\), and for \(K\geq 2\) set
\[
n_0(K):=\max\!\Bigl(2\rho_K-1,\;\max_{2\leq r<\rho_K}\bigl(r+\tau_K(r)+1\bigr)\Bigr).
\]

\begin{theorem}
\label{thm:size-stabilisation-sim2}
Let \(K\geq 2\) and \(n\geq n_0(K)\).  Then
\[
N^{\sim_2}_K(n)=2\varepsilon_K(n-2\rho_K+1)+4\sum_{r=2}^{\rho_K-1}c_K(r).
\]
\end{theorem}

\begin{proof}[Proof of \cref{thm:size-stabilisation-sim2}]
We freely use the symmetry, unimodality, and boundary stabilisations of
\(A_{r,c}(u)=[\qv^{u}]\binom{r+c}{r}_{\qv}\) from \cref{lem:gaussian-properties}.

The threshold \(n\geq n_0(K)\) ensures that, for every
\(r\in\intv{2}{\rho_K-1}\) with \(c_K(r)>0\), all coefficients counted by \(c_K(r)\) lie
strictly before the middle of the Gaussian row: if \(\xi\leq \tau_K(r)\), then
\(n\geq r+\tau_K(r)+1\) gives
\[
r(n-r)-2\xi\;\geq\;r(n-r)-2\tau_K(r)\;>\;0,
\]
so these coefficients and their palindromic partners are distinct.

Since \(n\geq 2\rho_K-1\), there are \(n-2\rho_K+1\) interior rows
\(j\in\intv{\rho_K}{n-\rho_K}\), accounting for the term
\(2\varepsilon_K(n-2\rho_K+1)\).

If \(r,c\geq\rho_K\), then for \(u\in\intv{0}{\rho_K-1}\) we have \(A_{r,c}(u)=p(u)\leq K\),
with equality precisely when \(K\) is a value of \(p\); at \(u=\rho_K\),
\(A_{r,c}(\rho_K)=p(\rho_K)>K\), and unimodality excludes further values \(K\)
before the middle.  Symmetry yields the same contribution at the right end,
so each row with \(r,c\geq\rho_K\) contributes \(2\varepsilon_K\) coefficients
equal to \(K\).

For \(r\in\intv{0}{\rho_K-1}\), the cases \(r=0,1\) contribute nothing since
\(p_{\leq 0}(\xi)\leq 1\) and \(p_{\leq 1}(\xi)=1\).  For \(r\in\intv{2}{\rho_K-1}\) and
\(c=n-r\geq \tau_K(r)+1\), we have \(A_{r,c}(u)=p_{\leq r}(u)\) throughout the
set \(\{u\geq 0:p_{\leq r}(u)=K\}\), all elements of which lie in
\(\intv{0}{\tau_K(r)}\) and hence (by the choice of threshold) strictly below
\(r(n-r)/2\); unimodality plus symmetry then give exactly \(2c_K(r)\)
coefficients equal to \(K\) per row.

Summing over the possible values of \(|\wrd|_{\tta}\), the \(n-2\rho_K+1\) rows with
\(\min(|\wrd|_{\tta},|\wrd|_{\ttb})\geq\rho_K\) each contribute \(2\varepsilon_K\), and the
nontrivial boundary rows pair up as \(\{r,n-r\}\) for \(r\in\intv{2}{\rho_K-1}\), each
pair contributing \(4c_K(r)\).  This yields the stated formula.
\end{proof}

In particular, \(N^{\sim_2}_K(n)\) is eventually linear in \(n\) when \(K\) is a
value of \(p\), and eventually constant otherwise.

\begin{example}\label{ex:fixed-size-small-K}
Consider \(K\in\{2,3,4\}\).  \cref{thm:size-stabilisation-sim2} yields the
eventual counts:
\begin{center}
\renewcommand{\arraystretch}{1.15}
\begin{tabular}{ccc}
\toprule
\(K\) & \(n_0(K)\) & \(N^{\sim_2}_K(n)\) for \(n\geq n_0(K)\) \\
\midrule
2 & 6  & \(2n-2\) \\
3 & 8  & \(2n-2\) \\
4 & 10 & \(12\)   \\
\bottomrule
\end{tabular}
\end{center}
\end{example}

\subsection{Class-size formula for \texorpdfstring{\(3/2\)}{3/2}-binomial equivalence}
\label{subsec:three-halves-class-size-formula}

We compute the \(\sim_{3/2}\)-class sizes by summing the \(\sim_2\)-class sizes
of \cref{thm:sim2-class-size} over blocks of constant \(\operatorname{strip}\).

\begin{proposition}
\label{prop:three-halves-blocks-of-two}
\label{cor:three-halves-class-size}
Let \(\wrd\in\{\tta,\ttb\}^n\) such that \(|\wrd|_{\tta}>0\).  Then the cardinality \(\#[\wrd]_{\sim_{3/2}}\) of the \(\sim_{3/2}\)-class of \(\wrd\) is the sum of coefficients
\[
\sum_{i=0}^{|\wrd|_{\tta}-1}[\qv^{\,|\wrd|_{\tta}\operatorname{strip}(\wrd)+i}]\binom{n}{|\wrd|_{\tta}}_{\!\qv}.
\]
\end{proposition}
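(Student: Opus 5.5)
The plan is to decompose the \(\sim_{3/2}\)-class of \(\wrd\) into \(\sim_2\)-classes and then sum the sizes given by \cref{thm:sim2-class-size}. Write \(j:=|\wrd|_{\tta}>0\) and \(t:=\operatorname{strip}(\wrd)\). By \cref{def:sim-32}, a word \(\wu\in\{\tta,\ttb\}^n\) lies in \([\wrd]_{\sim_{3/2}}\) iff \(|\wu|_{\tta}=j\) and \(\lfloor\binom{\wu}{\tta\ttb}/j\rfloor=t\). Since \(j>0\), the second condition is equivalent to
\[
\binom{\wu}{\tta\ttb}\in\intv{jt}{jt+j-1}.
\]
Hence \([\wrd]_{\sim_{3/2}}\) is the disjoint union, over \(\kappa=jt+i\) with \(i\in\intv{0}{j-1}\), of the sets \(\{\wu:|\wu|_{\tta}=j,\ \binom{\wu}{\tta\ttb}=\kappa\}\).

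Each nonempty set in this union is a single \(\sim_2\)-class, because \((|\wu|_{\tta},\binom{\wu}{\tta\ttb})\) is a complete \(\sim_2\)-invariant (\cref{prop:complete-invariant}). By \cref{thm:sim2-class-size}, its size is \([\qv^{\kappa}]\binom{n}{j}_{\qv}\). The remaining point is the case where some \(\kappa\) in the block exceeds \(j(n-j)\); this can only happen when \(t=n-j\), i.e.\ for the top strip. For such \(\kappa\) the set is empty, and \(\binom{n}{j}_{\qv}\) has degree \(j(n-j)\), so \([\qv^{\kappa}]\binom{n}{j}_{\qv}=0\) as well. Thus every term in the formula of \cref{prop:three-halves-blocks-of-two} equals the size of the corresponding piece, including the empty ones.

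Summing over \(i\in\intv{0}{j-1}\) then gives the stated expression. The only potential obstacle is the boundary bookkeeping just described: one must confirm that the floor condition translates exactly into the interval of length \(j\), and that coefficients beyond the degree vanish, so that no case distinction is needed at the top strip.
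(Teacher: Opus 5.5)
Your proposal is correct and is essentially the paper's proof: you rewrite \(\operatorname{strip}(\wu)=t\) as \(\binom{\wu}{\tta\ttb}\in\intv{jt}{jt+j-1}\) and sum the \(\sim_2\)-class sizes of \cref{thm:sim2-class-size} over this block. You also note explicitly that on the top strip, exponents beyond the degree \(j(n-j)\) give zero coefficients, which match the empty pieces; the paper leaves this boundary detail implicit.
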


\begin{proof}
By definition of \(\sim_{3/2}\), a word \(\wu\) lies in \([\wrd]_{\sim_{3/2}}\) iff
\(|\wu|_{\tta}=|\wrd|_{\tta}\) and
\(\binom{\wu}{\tta\ttb}=|\wrd|_{\tta}\operatorname{strip}(\wrd)+i\) for some
\(i\in\intv{0}{|\wrd|_{\tta}-1}\).  For each such \(i\), \cref{thm:sim2-class-size}
gives \([\qv^{\,|\wrd|_{\tta}\operatorname{strip}(\wrd)+i}]\binom{n}{|\wrd|_{\tta}}_{\!\qv}\)
such words; summing over \(i\) gives the formula.
\end{proof}

\begin{corollary}
\label{cor:three-halves-class-size-distribution}
The class-size distribution polynomial for \(\sim_{3/2}\) on \(\{\tta,\ttb\}^n\) is
\[
z+\sum_{\substack{[\wrd]_{\sim_{3/2}}\in\{\tta,\ttb\}^n/{\sim_{3/2}}\\ |\wrd|_{\tta}>0}}
z^{\,\sum_{i=0}^{|\wrd|_{\tta}-1}[\qv^{\,|\wrd|_{\tta}\operatorname{strip}(\wrd)+i}]\binom{n}{|\wrd|_{\tta}}_{\!\qv}}.
\]
\end{corollary}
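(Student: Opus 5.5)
We treat this as the \(\sim_{3/2}\) analogue of \cref{cor:sim2-class-size-distribution}. The class-size distribution polynomial is by definition \(\sum_{[\wrd]}z^{\#[\wrd]}\), the sum running over all \(\sim_{3/2}\)-classes of \(\{\tta,\ttb\}^n\). We split this sum according to whether \(|\wrd|_{\tta}=0\) or \(|\wrd|_{\tta}>0\). The two conditions \(|\wrd|_\tta=0\) and \(|\wrd|_\tta>0\) are \(\sim_1\)-invariants, hence \(\sim_{3/2}\)-invariants, so the split is well defined on classes.

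For the classes with \(|\wrd|_{\tta}=0\), we follow the proof of \cref{thm:three-halves-count}. There the only word with no \(\tta\) is \(\ttb^n\), and it forms a singleton class. This class therefore contributes exactly \(z^{1}=z\), which is the isolated term in the statement.

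For the classes with \(|\wrd|_{\tta}>0\), we invoke \cref{prop:three-halves-blocks-of-two}. It gives \(\#[\wrd]_{\sim_{3/2}}\) as the displayed sum of Gaussian coefficients, so substituting it into the exponent yields the summand. The one point to check is that this exponent does not depend on the chosen representative \(\wrd\). This holds because it depends only on \(n\), \(|\wrd|_{\tta}\) and \(\operatorname{strip}(\wrd)\), and the latter two are constant on a \(\sim_{3/2}\)-class by \cref{def:sim-32}.

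No step is a genuine obstacle. The only care needed is to treat the empty-\(\tta\) class separately, since \cref{prop:three-halves-blocks-of-two} assumes \(|\wrd|_\tta>0\): for \(|\wrd|_\tta=0\) its sum over \(i\in\intv{0}{-1}\) would be empty and would wrongly give exponent \(0\) instead of \(1\).
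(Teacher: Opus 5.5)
Your proposal is correct and follows essentially the same route as the paper: the term \(z\) accounts for the singleton class \(\{\ttb^n\}\), and for the remaining classes the exponent is the class size from \cref{prop:three-halves-blocks-of-two}, which is well defined because \(|\wrd|_{\tta}\) and \(\operatorname{strip}(\wrd)\) are constant on \(\sim_{3/2}\)-classes. Your extra remark, that for \(\ttb^n\) the empty sum would wrongly give exponent \(0\) instead of \(1\), explains well why that class has to be split off.
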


\begin{proof}
The term \(z\) accounts for the singleton class \(\{\ttb^n\}\); for the remaining
classes the exponent is given by the class-size formula of \cref{cor:three-halves-class-size},
which is well defined since \(|\wrd|_{\tta}\) and \(\operatorname{strip}(\wrd)\) are fixed on a
\(\sim_{3/2}\)-class.
\end{proof}

\begin{example}
For \(n=4\) and \(|\wrd|_{\tta}=2\), \cref{thm:sim2-class-size} gives the
\(\sim_2\)-class sizes as the coefficients of
\[
\binom{4}{2}_{\qv}=1+\qv+2\qv^{2}+\qv^{3}+\qv^{4},
\]
namely \(1,1,2,1,1\).  Grouping into blocks of two gives the
\(\sim_{3/2}\)-class sizes \(1+1=2\), \(2+1=3\), and \(1\) for
\(\operatorname{strip}(\wrd)=0,1,2\).
\end{example}

For \(K\geq 1\), let \(N^{\sim_{3/2}}_K(n)\) denote the number of
\(\sim_{3/2}\)-classes in \(\{\tta,\ttb\}^n\) of cardinality \(K\).

\begin{proposition}
\label{prop:singleton-three-halves}
Let \(n\in\N\).  Then
\[
N^{\sim_{3/2}}_1(n)=
\begin{cases}
1 & \text{if } n=0,\\
2n & \text{if } n\geq 1.
\end{cases}
\]
\end{proposition}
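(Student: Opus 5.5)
We count the \(\sim_{3/2}\)-classes of size \(1\) using the block description of \cref{cor:three-halves-class-size}, one value of \(j=|\wrd|_{\tta}\) at a time. For \(n=0\) there is only the empty word, so the count is \(1\). For \(n\ge1\), the value \(j=0\) gives the singleton class \(\{\ttb^n\}\). For each \(j\in\intv{1}{n}\) we determine which strips give singletons. By \cref{thm:32-count}, the strip values are \(t\in\intv{0}{n-j}\). The class with strip \(t\) has size \(\sum_{i=0}^{j-1}A_{j,n-j}(jt+i)\), where the indices \(jt+i\) are taken inside \(\intv{0}{j(n-j)}\); for \(t=n-j\) only \(i=0\) survives.

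The key observation is that \(A_{j,c}(u)\ge1\) for every \(u\in\intv{0}{jc}\), since every such area is realised by \cref{prop:complete-invariant}. Hence a block sums to \(1\) iff it contains exactly one admissible index and that coefficient equals \(1\). The top block \(t=n-j\) always consists of the single index \(u=j(n-j)\), with \(A=1\) (the full rectangle). So each \(j\ge1\) contributes at least one singleton.

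Now consider blocks with \(t<n-j\). For \(j\ge2\), each such block contains \(j\ge2\) admissible indices (note \(jt+j-1<j(n-j)\)), so its size is at least \(2\). The exception is \(j=n\): there \(c=0\) and the only block is the top one. So for \(2\le j\le n\) exactly one singleton arises. For \(j=1\), every coefficient of \(\binom{n}{1}_\qv=1+\qv+\cdots+\qv^{n-1}\) equals \(1\), and each block is the single index \(u=t\). Hence all \(n\) strip values \(t\in\intv{0}{n-1}\) give singletons. This is consistent with the fact that the words of length \(n\) with exactly one \(\tta\) are pairwise \(\sim_2\)-inequivalent.

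Summing the contributions gives \(1+n+(n-1)=2n\). The only step needing care is the edge bookkeeping: the truncation of the last block, and the cases \(j=n\) and \(n=1\). For \(n=1\), the classes \(j=0\) and \(j=1\) give \(2=2n\), as expected. I expect no real obstacle beyond checking this bookkeeping; the positivity of all coefficients of the Gaussian binomial is what makes the argument immediate.
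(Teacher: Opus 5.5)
Your proof is correct and uses the same case split as the paper's: the class \(\{\ttb^n\}\), the \(n\) classes with one \(\tta\), the single top-strip class \(\{\tta^j\ttb^{n-j}\}\) for each \(j\ge 2\), and no other singletons. The only difference is the tool for the strips \(t<n-j\) with \(j\ge2\): the paper exhibits two explicit words with areas \(jt\) and \(jt+1\), whereas you get the same realised areas from the surjectivity in \cref{prop:complete-invariant} combined with the block-sum formula of \cref{cor:three-halves-class-size}.
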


\begin{proof}
For \(n=0\), the empty word is the unique class.  Assume \(n\geq 1\).
The class \(\{\ttb^{n}\}\) is a singleton.  Every class with \(|\wrd|_{\tta}=1\)
is also a singleton: then \(\operatorname{strip}(\wrd)=\binom{\wrd}{\tta\ttb}\)
fixes the position of the single \(\tta\), giving \(n\) further singletons.  The \(n-1\)
words \(\tta^{r}\ttb^{\,n-r}\), \(r\in\intv{2}{n}\), are also singletons, because each
has the maximal possible value \(\binom{\wrd}{\tta\ttb}=|\wrd|_{\tta}|\wrd|_{\ttb}\)
among words with the same value of \(|\wrd|_{\tta}\).

No other class is a singleton: if \(|\wrd|_{\tta}\geq 2\) and
\(\operatorname{strip}(\wrd)<|\wrd|_{\ttb}\), the distinct words
\[
\wu:=\ttb^{\,|\wrd|_{\ttb}-\operatorname{strip}(\wrd)}\tta^{\,|\wrd|_{\tta}}\ttb^{\,\operatorname{strip}(\wrd)},
\qquad
\wv:=\ttb^{\,|\wrd|_{\ttb}-\operatorname{strip}(\wrd)-1}\tta\ttb\,\tta^{\,|\wrd|_{\tta}-1}\ttb^{\,\operatorname{strip}(\wrd)}
\]
satisfy \(|\wu|_{\tta}=|\wv|_{\tta}\) and \(\binom{\wu}{\tta\ttb}=|\wrd|_{\tta}\operatorname{strip}(\wrd)\),
\(\binom{\wv}{\tta\ttb}=|\wrd|_{\tta}\operatorname{strip}(\wrd)+1\), so
\(\operatorname{strip}(\wu)=\operatorname{strip}(\wv)=\operatorname{strip}(\wrd)\) and \(\wu\sim_{3/2}\wv\) by
\cref{thm:32-tfae}.

Hence the number of singleton classes is \(1+n+(n-1)=2n\).
\end{proof}

For \(r\geq 2\), \(\alpha\in\N\), and \(\beta\in\N_{>0}\), define the block sums
\[
L_r(\alpha):=\sum_{\xi=\alpha r}^{(\alpha+1)r-1}p_{\leq r}(\xi),
\qquad
R_r(\beta):=\sum_{\xi=(\beta-1)r+1}^{\beta r}p_{\leq r}(\xi).
\]
Each adds \(p_{\leq r}\) over a block of \(r\) consecutive integers: the blocks
\(\intv{\alpha r}{(\alpha+1)r-1}\) partition \(\N\), while the blocks
\(\intv{(\beta-1)r+1}{\beta r}\) are this partition shifted up by one.

\begin{example}
Assume that \(r=2\), so \(p_{\leq 2}(\xi)=1,1,2,2,3,3,\dots\).
The blocks \(\{0,1\},\{2,3\},\{4,5\},\dots\) give \(L_2(0)=2\), \(L_2(1)=4\),
\(L_2(2)=6,\dots\), while the shifted blocks \(\{1,2\},\{3,4\},\{5,6\},\dots\)
give \(R_2(1)=3\), \(R_2(2)=5\), \(R_2(3)=7,\dots\).  For large \(n\) these
are the \(\sim_{3/2}\)-class sizes with \(|\wrd|_{\tta}=2\) at the lower (from
\(L_2\), even) and upper (from \(R_2\), odd) ends of the strip range.
\end{example}

Set \(n_1(K):=K^{2}+K-1\).

\begin{theorem}
\label{thm:size-stabilisation-three-halves}
Let \(K\geq 2\) and \(n\geq n_1(K)\).  Then
\[
N^{\sim_{3/2}}_K(n)=\sum_{r=2}^{K}\bigl(\#\{\alpha\in\N:L_r(\alpha)=K\}
+\#\{\beta\in\N_{>0}:R_r(\beta)=K\}\bigr).
\]
\end{theorem}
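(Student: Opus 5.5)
We organise the count by the number of letters \(\tta\).  Write \(r:=|\wrd|_{\tta}\) and \(c:=n-r=|\wrd|_{\ttb}\).  By \cref{cor:three-halves-class-size}, the \(\sim_{3/2}\)-classes with \(|\wrd|_{\tta}=r\geq 1\) have sizes
\[
S_{r,c}(\alpha):=\sum_{i=0}^{r-1}A_{r,c}(\alpha r+i),\qquad \alpha\in\intv{0}{c}.
\]
The class \(\{\ttb^n\}\) and the rows \(r=1\) consist of singletons (as in \cref{prop:singleton-three-halves}), so they do not contribute for \(K\geq 2\).  For \(r\geq 2\), every block with \(\alpha\leq c-1\) contains \(r\) coefficients, each at least \(1\), and the block \(\alpha\) contains \(A_{r,c}(\alpha r)\geq 1\).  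Hence any block of size \(K\) has \(r\leq K\).  The same bound holds at the top end: the last block \(\alpha=c\) is the single coefficient \(A_{r,c}(rc)=1\), which is never \(K\geq 2\).  So only the rows \(r\in\intv{2}{K}\) and, symmetrically, the rows \(c\in\intv{2}{K}\) can contribute.

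The second step handles the rows \(r\in\intv{2}{K}\) with \(c=n-r\) large.  We compare the low blocks with \(L_r(\alpha)\).  By \cref{lem:gaussian-properties}\,(iii), \(A_{r,c}(u)=p_{\leq r}(u)\) whenever \(u\leq c\), so \(S_{r,c}(\alpha)=L_r(\alpha)\) for \((\alpha+1)r-1\leq c\).  Since \(p_{\leq r}\) is nondecreasing and \(p_{\leq r}(u)\geq 1\), we have \(L_r(\alpha)\geq r\,p_{\leq r}(\alpha r)\), and \(L_r\) is strictly increasing in \(\alpha\).  Also \(p_{\leq 2}(u)=\lfloor u/2\rfloor+1\), so \(L_r(\alpha)\geq L_2(\alpha)=2\alpha+2\), which forces \(\alpha\leq K/2\) whenever \(L_r(\alpha)=K\).

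At the top end, \cref{lem:gaussian-properties}\,(i) gives \(A_{r,c}(rc-v)=p_{\leq r}(v)\) for \(v\leq c\).  The top blocks \(\alpha=c-\beta\) cover \(u\in\intv{rc-\beta r}{rc-\beta r+r-1}\), that is \(v=rc-u\in\intv{(\beta-1)r+1}{\beta r}\), so \(S_{r,c}(c-\beta)=R_r(\beta)\).  Again \(\beta\leq K/2\) roughly.

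It remains to exclude the middle blocks.  By unimodality (\cref{lem:gaussian-properties}\,(ii)), \(A_{r,c}\) increases up to the middle, so the middle block sums exceed the boundary block sums.  More concretely, once a block lies beyond \(u\approx K\), it contains a coefficient at least \(p_{\leq r}(K)\), and since \(r\geq 2\) this already exceeds \(K/r\)-type bounds.  The cleanest route is to show directly that \(S_{r,c}(\alpha)>K\) for all \(\alpha\) with \(\alpha r\geq K\) and \(\alpha r\leq rc/2\), using \(S\geq r\cdot A_{r,c}(\alpha r)\geq r\,p_{\leq r}(\min(\alpha r,c))\).  The threshold \(n_1(K)=K^2+K-1\) guarantees \(c\geq n-K\geq K^2-1\).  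This makes all relevant \(u\) (at most about \(K\cdot K\)) satisfy \(u\leq c\), and it keeps the low and high block families disjoint, so no block is counted twice.

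The third step treats the rows where \(c\in\intv{2}{K}\) is small but \(r\) is large.  Each block then contains \(r\geq n-K>K\) coefficients, each at least \(1\), except the last block, which is \(\{1\}\); so no block has size \(K\).  Summing the counts from the rows \(r\in\intv{2}{K}\) gives the formula.

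The main obstacle is the middle-range exclusion and the exact alignment of the top blocks with \(R_r\), together with checking that \(n_1(K)\) suffices for all \(r\leq K\).  I would first establish the inequality \(p_{\leq r}(u)\geq \lfloor u/2\rfloor+1\) and use it to bound every relevant index \(u\) by \(K^2\).
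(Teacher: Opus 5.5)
Your argument is correct in outline and shares the paper's skeleton. Both arguments discard rows with $r=|\wrd|_{\tta}>K$, because a non-terminal block carries $r$ positive coefficients. Both identify the end blocks with $L_r(\alpha)$ and, via palindromy, with $R_r(\beta)$ inside the stable range $u\le c$, and both use $c\ge K^2-1$ to keep the two families disjoint.

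The real difference is how the middle blocks are excluded.
\begin{itemize}
\item The paper never uses unimodality. The partitions $(\operatorname{strip}+d,\operatorname{strip},\dots,\operatorname{strip},\operatorname{strip}-d)$ give $[\qv^{r\operatorname{strip}}]\binom{n}{r}_{\qv}\ge\min\{\operatorname{strip},c-\operatorname{strip}\}+1$. Hence a size-$K$ block has $\operatorname{strip}\le K-1$ or $c-\operatorname{strip}\le K-1$. The threshold $K^2+K-1$ is exactly what places such blocks (indices up to $K^2-1$) in the stable range.
\item You combine Sylvester's unimodality with $p_{\le r}(u)\ge\lfloor u/2\rfloor+1$. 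This gives $r\,p_{\le r}(K)>K$, so any block whose coefficients all sit (after reflection) at indices at least $K$ is too large.
\end{itemize}
Your route uses a deeper classical input. In exchange it localises the relevant blocks much more tightly (indices below $K+r$ rather than $K^2$), and carried out carefully it proves the formula already for $n\ge 3K-1$.

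Two points need tightening.

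First, the bound $S_{r,c}(\alpha)\ge r\,A_{r,c}(\alpha r)$ is false for the block straddling $rc/2$. For $r=2$, $c=10$, $\alpha=5$ one gets $A_{2,10}(10)+A_{2,10}(11)=6+5<12$. The fix is to use the smaller endpoint value of the block instead, since for a unimodal sequence the minimum over an interval is attained at an endpoint. For the straddling block, the left endpoint and the reflected right endpoint both lie in $\intv{c-r+1}{\lfloor rc/2\rfloor}$. So each value is at least $p_{\le r}(K)$ once $c-r+1\ge K$, which the paper's threshold guarantees.

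Second, run the upper half explicitly on the reflected blocks $\intv{(\beta-1)r+1}{\beta r}$. Such a block is stable when $\beta r\le c$, and otherwise has size $>K$ by the same estimate. Your separate ``symmetric'' rows $c\in\intv{2}{K}$ need no treatment, since they already have $r>K$.
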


\begin{proof}
Fix \(K\geq 2\), and let \(\wrd\in\{\tta,\ttb\}^n\) represent a non-singleton
\(\sim_{3/2}\)-class.  By \cref{prop:three-halves-blocks-of-two} this class has
size
\[
\sum_{i=0}^{|\wrd|_{\tta}-1}[\qv^{|\wrd|_{\tta}\operatorname{strip}(\wrd)+i}]\binom{n}{|\wrd|_{\tta}}_{\qv},
\]
and by \cref{prop:singleton-three-halves} the singleton cases
\(|\wrd|_{\tta}\leq 1\) and \(\operatorname{strip}(\wrd)=|\wrd|_{\ttb}\) may be
discarded, so \(|\wrd|_{\tta}\geq 2\) and
\(\operatorname{strip}(\wrd)\in\intv{0}{|\wrd|_{\ttb}-1}\); since the block carries
\(|\wrd|_{\tta}\) positive coefficients, a class of size \(K\) has
\(|\wrd|_{\tta}\leq K\).

Adding \(d\) to the first part and subtracting \(d\) from the last, for
\(d\in\intv{0}{\min\{\operatorname{strip}(\wrd),|\wrd|_{\ttb}-\operatorname{strip}(\wrd)\}}\),
turns the partition with all \(|\wrd|_{\tta}\) parts equal to
\(\operatorname{strip}(\wrd)\) into
\(\min\{\operatorname{strip}(\wrd),|\wrd|_{\ttb}-\operatorname{strip}(\wrd)\}+1\) distinct
partitions of area \(|\wrd|_{\tta}\operatorname{strip}(\wrd)\) in the
\(|\wrd|_{\tta}\times|\wrd|_{\ttb}\) rectangle, so
\[
[\qv^{|\wrd|_{\tta}\operatorname{strip}(\wrd)}]\binom{n}{|\wrd|_{\tta}}_{\qv}
\geq\min\{\operatorname{strip}(\wrd),|\wrd|_{\ttb}-\operatorname{strip}(\wrd)\}+1.
\]
Hence \(\min\{\operatorname{strip}(\wrd),|\wrd|_{\ttb}-\operatorname{strip}(\wrd)\}\leq K-1\),
i.e.\ \(\operatorname{strip}(\wrd)\leq K-1\) or
\(|\wrd|_{\ttb}-\operatorname{strip}(\wrd)\leq K-1\); these alternatives are disjoint,
since \(|\wrd|_{\ttb}\geq K^{2}-1>2K-2\).

The threshold \(n\geq n_1(K)=K^{2}+K-1\) places every such block inside the
stable range \(u\leq|\wrd|_{\ttb}\), where
\([\qv^{u}]\binom{n}{|\wrd|_{\tta}}_{\qv}=p_{\leq|\wrd|_{\tta}}(u)\): in the left
case the right endpoint satisfies
\((\operatorname{strip}(\wrd)+1)|\wrd|_{\tta}-1\leq K^{2}-1\leq|\wrd|_{\ttb}\), while
in the right case, with \(\beta:=|\wrd|_{\ttb}-\operatorname{strip}(\wrd)\leq K-1\),
Gaussian palindromy (\cref{cor:fixed-tail-symmetry}) reflects the block onto
\(\intv{(\beta-1)|\wrd|_{\tta}+1}{\beta|\wrd|_{\tta}}\), where
\(\beta|\wrd|_{\tta}\leq K^{2}-K\leq|\wrd|_{\ttb}\).  Hence the class size is
\(L_{|\wrd|_{\tta}}(\operatorname{strip}(\wrd))\) when
\(\operatorname{strip}(\wrd)\leq K-1\) and \(R_{|\wrd|_{\tta}}(\beta)\)
when \(\beta\leq K-1\).  As the value of \(|\wrd|_{\tta}\) ranges over \(r\in\intv{2}{K}\), the
pre-images \(L_r^{-1}(K)\) and \(R_r^{-1}(K)\) count the size-\(K\) classes, and
summing gives the formula.
\end{proof}

Since the pre-images \(L_r^{-1}(K)\) and \(R_r^{-1}(K)\) lie in \(\intv{0}{K-1}\)
and \(\intv{1}{K-1}\), the right-hand side is independent of \(n\); hence
\(N^{\sim_{3/2}}_K(n)\) is eventually constant in \(n\) for each \(K\geq 2\).

\subsection{Stabilisation of \texorpdfstring{\(s\)}{s}-quasi-binomial equivalence and singleton classes}
\label{subsec:stabilisation-singletons}

Singleton \(\approx_s\)-classes were studied from a formal-language viewpoint by
Lejeune et al.~\cite{LejeuneRigoRosenfeld2020}; we count them.  By
\cref{cor:equiv-s-stabilisation}, \(\approx_s\) is the equality relation when
\(n\le s+3\), where the count is immediate; the remaining case \(n\ge s+4\)
reduces to the following fact.

\begin{lemma}
\label{lem:unit-coefficients-gaussian}
The value \(A_{r,c}(u)\) equals \(1\) for all \(u\in\intv{0}{rc}\) when
\(\min(r,c)\leq 1\), and only for \(u\in\{0,1,rc-1,rc\}\) when \(\min(r,c)\geq 2\).
\end{lemma}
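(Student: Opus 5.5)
We argue directly from the interpretation of \(A_{r,c}(u)\) as the number of partitions of \(u\) contained in an \(r\times c\) rectangle. The degenerate case comes first. If \(\min(r,c)=0\), the rectangle is empty, so \(rc=0\) and \(A_{r,c}(0)=1\) counts the empty partition. If \(r=1\), a partition in the \(1\times c\) box is a single part \(u\in\intv{0}{c}\), so \(A_{1,c}(u)=1\) for each \(u\). The case \(c=1\) is symmetric: a partition in the \(r\times 1\) box is \(1^u\), or equivalently we may use conjugation, which gives \(A_{r,c}=A_{c,r}\).

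Now assume \(r,c\geq 2\). The values \(u=0\) and \(u=1\) have the unique partitions \(\varnothing\) and \((1)\), and both fit in the box, so \(A_{r,c}(0)=A_{r,c}(1)=1\). By the symmetry in \cref{lem:gaussian-properties}\,(i), \(A_{r,c}(rc)=A_{r,c}(rc-1)=1\) as well.

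It remains to show \(A_{r,c}(u)\geq 2\) for \(2\leq u\leq rc-2\). By symmetry it suffices to treat \(2\leq u\leq \lfloor rc/2\rfloor\). The plan is to exhibit two distinct partitions of \(u\) inside the box. Write \(u=qc+t\) with \(0\leq t<c\), and take the greedy ``row-filled'' partition \(\nu=(c^{q},t)\); this uses at most \(r\) rows since \(u\leq rc\). For the second partition, modify \(\nu\) by moving one cell to a different position:
\begin{itemize}
\item If \(t\geq 1\) and \(q\geq 1\), replace \((c,\dots,c,t)\) by \((c,\dots,c,c-1,t+1)\) when \(t+1\leq c-1\), i.e.\ \(t\leq c-2\). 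If instead \(t=c-1\), use \((c^{q},t-1,1)\), which needs \(q+2\leq r\) rows and \(t\geq 1\).
\item If \(t=0\), then \(q\geq 1\); use \((c^{q-1},c-1,1)\), which needs \(q+1\leq r\) rows.
\item If \(q=0\), then \(2\leq u=t<c\), and we can use \((t-1,1)\) since \(r\geq 2\).
\end{itemize}

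The row counts required in these cases are exactly where \(u\leq rc/2\) is needed, since it gives \(q\leq r/2\). The main obstacle is this bookkeeping in the small cases \(r=2\) or \(c=2\), where \(q+2\leq r\) can fail. For instance, with \(r=2\) and \(t=c-1\) we have \(q=0\), which is already covered by the third bullet. We will check each of these edge configurations explicitly.

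An alternative route is possibly cleaner. By unimodality (\cref{lem:gaussian-properties}\,(ii)), it suffices to show \(A_{r,c}(2)\geq 2\), since then all coefficients between \(2\) and \(rc-2\) are at least \(2\). This holds because \((2)\) and \((1,1)\) both fit when \(r,c\geq 2\). This unimodality route is the one we will actually write, keeping the explicit construction only as a sanity check.
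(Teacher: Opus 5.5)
Your unimodality route is exactly the paper's proof: \(A_{r,c}(2)\geq 2\) because \((2)\) and \((1,1)\) fit, and symmetry plus unimodality then cover \(\intv{2}{rc-2}\), with the degenerate sides and the areas \(0,1,rc-1,rc\) handled just as you do. The explicit construction you keep only as a sanity check has a slip at \(c=2\), \(t=c-1=1\), where \((c^{q},t-1,1)\) is not a partition; use \((c^{q-1},c-1,c-1,1)\) there instead, whose \(q+2\) rows are guaranteed by \(u\le rc/2\) when \(q\geq 1\). This does not affect the proof you intend to write.
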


\begin{proof}
The number \(A_{r,c}(u)\) counts the partitions of \(u\) contained in the
\(r\times c\) rectangle.  If \(r=0\) or \(c=0\), only the empty partition fits,
so only \(u=0\) occurs.  If \(r=1\), the partitions \((u)\), \(u\in\intv{0}{c}\),
give one partition for each area; the case \(c=1\) is symmetric.

Assume now that \(r,c\geq 2\).  Only \(\emptyset\) and \((1)\) have areas \(0\)
and \(1\), respectively.  By the symmetry of \(A_{r,c}\)
(\cref{lem:gaussian-properties}), the same holds for the complementary areas
\(rc\) and \(rc-1\).  For \(u\in\intv{2}{rc-2}\), symmetry lets us assume
\(u\leq rc/2\).  Since the two partitions \((2)\) and \((1,1)\) both fit in the
rectangle, \(A_{r,c}(2)\geq 2\); unimodality (\cref{lem:gaussian-properties})
then gives \(A_{r,c}(u)\geq 2\).  Hence \(A_{r,c}(u)=1\) exactly for
\(u\in\{0,1,rc-1,rc\}\).
\end{proof}

For \(s,n\in\N\), put
\[
S_{s}(n):=\#\bigl\{[\wrd]_s\in\{\tta,\ttb\}^n/\approx_s:\#[\wrd]_s=1\bigr\},
\]
and let \(\mathcal{S}_s(x):=\sum_{n\geq 0}S_{s}(n)\,x^{n}\) be its generating function.

\begin{theorem}
\label{thm:singleton-classes-equiv-s}
Let \(s\in\N\).  Then
\[
\mathcal{S}_s(x)
=\sum_{j=0}^{s}\frac{x^{j}}{(1-x)^{j+1}}
+\frac{x^{s+1}(1+x)(1+2x^{2})}{(1-x)^{s+2}}.
\]
\end{theorem}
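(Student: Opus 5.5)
The plan is to count singleton classes directly from the class-size formula of \cref{thm:general-class-size-equiv-s}. Combined with \cref{lem:unit-coefficients-gaussian}, it shows that the class of a word is a singleton exactly when
\[
A_{r,c}(R_s(\wrd))=1,\qquad r=|\wrd|_\tta-\ell,\quad c=|\wrd|_\ttb-\theta .
\]
By \cref{lem:Lambda-bijective} (or directly from the Young-diagram encoding), an \(\approx_s\)-class is determined by the triple \((j,\mathcal{T},R)\), where \(j=|\wrd|_\tta\), \(\mathcal{T}\) is the \(s\)-tail and \(R\) is the residual area. As \(\wrd\) varies, \(R\) ranges over all of \(\intv{0}{rc}\) by \cref{lem:residual-area-bounds} and the rectangle interpretation. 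So \(S_s(n)\) is a sum over pairs \((j,\mathcal{T})\) of the number of \(u\in\intv{0}{rc}\) with \(A_{r,c}(u)=1\).

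I split according to whether \(j\le s\) or \(j>s\).

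\textbf{Case \(j\le s\).} Here \(\ell=j\) and \(r=0\), so the whole diagram is recorded and every class is a singleton. There is one class per word, giving \(\binom{n}{j}\) classes. Since \(\sum_{n}\binom{n}{j}x^n=x^j/(1-x)^{j+1}\), summing over \(j\in\intv{0}{s}\) produces the first sum in the statement.

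\textbf{Case \(j>s\).} Here \(\ell=s\) and \(r=j-s\ge 1\). The tail is a weakly increasing sequence \((t_1\le\cdots\le t_s)\) in \(\intv{0}{n-j}\) with last entry \(\theta\). For fixed \(\theta\) there are \(\binom{\theta+s-1}{s-1}\) such tails when \(s\ge1\), and exactly one (with \(\theta=0\)) when \(s=0\); in both cases \(\sum_\theta(\#\text{tails})\,x^\theta=(1-x)^{-s}\). Setting \(c=n-j-\theta\ge 0\) gives the decomposition \(n=s+r+\theta+c\). Define the weight \(f(r,c)\) by \cref{lem:unit-coefficients-gaussian}:
\[
f(r,c)=\begin{cases} rc+1, & \text{if } \min(r,c)\le1,\\ 4, & \text{if } r,c\ge2.\end{cases}
\]
The contribution of this case is then \(x^s(1-x)^{-s}G(x)\), where \(G(x)=\sum_{r\ge1,c\ge0}f(r,c)x^{r+c}\).

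The last step, and the only real computation, is to show
\[
G(x)=\frac{x(1+x)(1+2x^2)}{(1-x)^2},
\]
which yields the second term of the statement. I would split \(G\) into four pieces:
\begin{itemize}
\item \(r=1\): \(\sum_{c\ge0}(c+1)x^{c+1}=x/(1-x)^2\);
\item \(r\ge2,\ c=0\): \(x^2/(1-x)\);
\item \(r\ge2,\ c=1\): \(\sum_{r\ge2}(r+1)x^{r+1}\);
\item \(r,c\ge2\): \(4x^4/(1-x)^2\).
\end{itemize}
Bringing these to the common denominator \((1-x)^2\), the numerator should equal \(x+x^2+2x^3+2x^4\). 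The main obstacle is bookkeeping rather than ideas. Two boundary situations need care: the degenerate \(s=0\) tail count, and the overlap \(r=1,c\le1\) between the two regimes of \(f\). A sanity check on small values, for instance \(s=0\) against the \(\sim_2\) singleton counts for \(n\le4\), where every class is a singleton, would confirm the constants.
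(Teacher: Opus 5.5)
Your proposal is correct and follows the paper's proof essentially step for step. It uses the same split at $|\wrd|_\tta\le s$ versus $|\wrd|_\tta>s$, the same tail factor $(1-x)^{-s}$, and the same weighting of residual rectangles by their number of unit coefficients; your four-piece computation of $G(x)$, which checks out, fills in a step the paper only asserts.

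One small slip in your closing sanity check: for $s=0$, every class is a singleton only for $n\le 3$. At $n=4$ the words $\tta\ttb\ttb\tta$ and $\ttb\tta\tta\ttb$ collide, so $S_0(4)=14$, which the generating function does reproduce.
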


\begin{proof}
By \cref{thm:general-class-size-equiv-s} and \cref{lem:unit-coefficients-gaussian},
a class is a singleton iff its size
\(A_{|\wrd|_{\tta}-\ell,\,|\wrd|_{\ttb}-\theta}(R_s(\wrd))\) equals \(1\), where
\(\ell=\min(s,|\wrd|_{\tta})\).

For \(|\wrd|_{\tta}\leq s\) we have \(\ell=|\wrd|_{\tta}\), so the first index
is \(0\) and every such class is a singleton, contributing
\[
\sum_{j=0}^{s}\sum_{n\geq 0}\binom{n}{j}\,x^{n}
=\sum_{j=0}^{s}\frac{x^{j}}{(1-x)^{j+1}}.
\]

For \(|\wrd|_{\tta}>s\) we have \(\ell=s\), and by \cref{lem:unit-coefficients-gaussian}
a class is a singleton iff its residual area is a unit coefficient of the
residual rectangle's Gaussian binomial---every area when a side of the
rectangle is at most \(1\), and the four extreme areas otherwise.  The \(s\)-tail is a weakly increasing tuple of non-negative integers, empty when
\(s=0\); for \(s=0\) it contributes the factor
\(1\), while for \(s\geq 1\) exactly \(\binom{\theta+s-1}{s-1}\) of these have
last entry \(\theta\), so they contribute
\(\sum_{\theta\geq 0}\binom{\theta+s-1}{s-1}x^{\theta}=1/(1-x)^{s}\).  Either way
the factor is \(1/(1-x)^{s}\).
Weighting each residual rectangle (which has at least one row) by \(x\) raised
to the sum of its two side lengths and by its number of unit coefficients gives
\(x(1+x)(1+2x^{2})/(1-x)^{2}\).  Since \(n\) equals \(s\) plus \(\theta\) plus
that sum, this case contributes
\[
x^{s}\cdot\frac{1}{(1-x)^{s}}\cdot\frac{x(1+x)(1+2x^{2})}{(1-x)^{2}}
=\frac{x^{s+1}(1+x)(1+2x^{2})}{(1-x)^{s+2}}.
\]

Summing the two cases gives the stated generating function.
\end{proof}

Extracting coefficients gives the closed form
\[
S_{s}(n)=\sum_{j=0}^{s}\binom{n}{j}+\binom{n}{s+1}+\binom{n-1}{s+1}+2\binom{n-2}{s+1}+2\binom{n-3}{s+1}.
\]
In particular, \(S_{s}(n)=2^{n}\) for \(n\leq s+3\), since then \(\approx_s\) is
the equality relation by \cref{cor:equiv-s-stabilisation}.

\begin{example}
Specialising \cref{thm:singleton-classes-equiv-s} to \(s=0,1,2\) gives, for
\(n\geq s+4\),
\[
S_{0}(n)=6n-10,\qquad S_{1}(n)=3n^{2}-13n+20,\qquad S_{2}(n)=n^{3}-8n^{2}+27n-28,
\]
while \(S_{s}(n)=2^{n}\) for \(n\leq s+3\), with which each polynomial agrees at
\(n=s+3\).
\end{example}

\begin{corollary}
\label{cor:first-collision}
Let \(s\in\N\).  Then \(\approx_s\) has exactly one non-singleton class on
\(\{\tta,\ttb\}^{s+4}\), namely
\[
\left\{\tta\ttb^{2}\tta^{s+1},\ \ttb\tta^{2}\ttb\tta^{s}\right\}.
\]
\end{corollary}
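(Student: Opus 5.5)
The proof is a counting argument: the class count of \cref{thm:higher-tail-schlaefli-dictionary} forces exactly one class of size two, and a direct check shows that the two displayed words form such a class.

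\textbf{Step 1: the class count.} Put \(n=s+4\). After \cref{thm:higher-tail-schlaefli-dictionary} we have
\[
\#\bigl(\{\tta,\ttb\}^{s+4}/\approx_s\bigr)=\sum_{r=0}^{s+3}\binom{s+4}{r}=2^{s+4}-1 .
\]
So \(2^{s+4}\) words are partitioned into \(2^{s+4}-1\) classes. Write \(N\) for the number of classes. The sum \(\sum_{\text{classes}}(\#[\wrd]_s-1)=2^{s+4}-N=1\) has nonnegative terms. Hence exactly one term equals \(1\) and all others are \(0\): there is exactly one non-singleton class, and it has exactly two elements. This is consistent with \cref{cor:equiv-s-stabilisation}, since \(s=n-4\) is the last value at which \(\approx_{s+1}\subsetneq\approx_s\).

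\textbf{Step 2: identifying the class.} It remains to exhibit two distinct \(\approx_s\)-equivalent words. Take \(\wu=\tta\ttb^{2}\tta^{s+1}\) and \(\wv=\ttb\tta^{2}\ttb\tta^{s}\), both of length \(s+4\); they are clearly distinct.
\begin{itemize}
\item Both have \(|\cdot|_\tta=s+2\) and \(|\cdot|_\ttb=2\).
\item Their Young diagrams are \(\lambda(\wu)=(2,0^{s+1})\) and \(\lambda(\wv)=(1,1,0^{s})\). Both have area \(2\), so \(\binom{\wu}{\tta\ttb}=\binom{\wv}{\tta\ttb}=2\) and \(\wu\sim_2\wv\) by \cref{prop:complete-invariant}.
\item Since \(|\cdot|_\tta=s+2>s\), we have \(\ell=s\). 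The \(s\)-tail reads the last \(s\) parts of \(\lambda\) from the bottom. Both diagrams have at least \(s\) trailing zeros, so \(\mathcal{T}_s(\wu)=\mathcal{T}_s(\wv)=(0,\ldots,0)\), with \(s\) entries.
\end{itemize}
Therefore \(\wu\approx_s\wv\). By Step 1, \(\{\wu,\wv\}\) is the unique non-singleton class and it has no further elements.

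No step is a real obstacle, since Step 1 does all the work. The one point needing care is the tail computation in Step 2: we must check that \(\ell=s\) rather than \(|\cdot|_\tta\), and that the tails agree even when \(s=0\), where both are empty.
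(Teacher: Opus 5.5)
Your proposal is correct and follows essentially the same approach as the paper. The class count \(2^{s+4}-1\) from \cref{thm:higher-tail-schlaefli-dictionary} forces exactly one two-element class. The Young diagrams \((2,0^{s+1})\) and \((1,1,0^{s})\) share height \(s+2\), area \(2\), and the all-zero \(s\)-tail, so the two displayed words are \(\approx_s\)-equivalent and form that class.
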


\begin{proof}
By \cref{thm:higher-tail-schlaefli-dictionary},
\(\#(\{\tta,\ttb\}^{s+4}/\approx_s)=\sum_{r=0}^{s+3}\binom{s+4}{r}=2^{s+4}-1\);
since there are \(2^{s+4}\) words, exactly one class is non-singleton, of size
\(2\).  The two displayed words have Young diagrams \((2,0,\ldots,0)\) and
\((1,1,0,\ldots,0)\), sharing \(|\wrd|_{\tta}=s+2\), area \(2\), and \(s\)-tail
\((0,\ldots,0)\); they are therefore \(s\)-quasi-binomially equivalent and constitute that
class.
\end{proof}

At \(s=0\) these words are \(\tta\ttb\ttb\tta\) and \(\ttb\tta\tta\ttb\), the
first nontrivial \(\sim_2\)-collision, occurring at \(n=4\).

Asymptotically, for fixed \(s\), \cref{thm:singleton-classes-equiv-s}
gives \(S_{s}(n)=\tfrac{6}{(s+1)!}\,n^{s+1}+O_{s}(n^{s})\), while
\cref{thm:higher-tail-schlaefli-dictionary} gives
\(\#(\{\tta,\ttb\}^{n}/\approx_s)=\tfrac{1}{(s+3)!}\,n^{s+3}+O_{s}(n^{s+2})\);
hence
\[
\frac{S_{s}(n)}{\#(\{\tta,\ttb\}^{n}/\approx_s)}=O_{s}(n^{-2}),
\]
so singleton classes form an asymptotically negligible proportion of all
\(\approx_s\)-classes.


\makeatletter
\let\orig@thebibliography\thebibliography
\renewcommand{\thebibliography}[1]{%
    \orig@thebibliography{#1}%
    \setlength{\leftmargin}{5.5em}%
    \setlength{\itemindent}{-2em}%
    \setlength{\labelwidth}{2.5em}%
    \setlength{\labelsep}{0.5em}%
    \setlength{\rightmargin}{3em}%
    \footnotesize
    \renewcommand{\makelabel}[1]{\hfill\textnormal{\scriptsize ##1}}%
}
\makeatother

\hypersetup{linkcolor=backrefcolor}

\end{document}